\newtheorem{Theorem}{Theorem}[section]
\theoremstyle{definition}
\newtheorem{thm}{Theorem}[section]
\newtheorem{cor}[thm]{Corollary}
\newtheorem{lem}[thm]{Lemma}
\newtheorem{prop}[thm]{Proposition}
\newtheorem{defn}[thm]{Definition}
\newtheorem{rem}[thm]{Remark}
\numberwithin{equation}{section}
\def\C{\mathbb C}
\def\F{\mathbb F}
\def\N{\mathbb N}
\def\R{\mathbb R}
\def\Z{\mathbb Z}
\def\K{\mathbb K}
\newcommand{\CB}{\mathcal{B}}
\newcommand{\CH}{\mathcal{H}}
\newcommand{\CP}{\mathcal{P}}
\newcommand{\CW}{\mathcal{W}}
\def\be{\begin{equation}}
	\def\ee{\end{equation}}
\def\bt{\begin{Theorem}}
	\def\et{\end{Theorem}}
\def\bi{\begin{itemize}}
	\def\ei{\end{itemize}}
\def\bea{\begin{eqnarray}}
	\def\eea{\end{eqnarray}}
\def\beast{\begin{eqnarray*}}
	\def\eeast{\end{eqnarray*}}
\def\ben{\begin{enumerate}}
	\def\een{\end{enumerate}}
\def\bi{\bibitem}
\newcommand{\norm}[1]{\left\Vert#1\right\Vert}
\newcommand\inner[2]{\left\langle #1, #2 \right\rangle}
\renewcommand{\MR}[1]{} 
\newcommand{\abs}[1]{\left\vert#1\right\vert}
\begin{document}
	\title[noncommutative ergodic theorem]{On noncommutative ergodic theorems for semigroup and free group actions}

	\author[P. Bikram]{Panchugopal Bikram}
	\author[D. Saha]{Diptesh Saha}

	
	\address{School of Mathematical Sciences,
		National Institute of Science Education and Research,  Bhubaneswar, An OCC of Homi Bhabha National Institute,  Jatni- 752050, India}
	
	\email{bikram@niser.ac.in} \email{diptesh.saha@niser.ac.in}

	\keywords{  von Neumann algebras, maximal ergodic inequality, individual ergodic theorems, non-commutative $L_1$-spaces}
	\subjclass[2010]{Primary: 46L53, 46L55; Secondary: 37A55, 46L51.}
	
	\begin{abstract} 
		In this article,  we consider actions of  $\Z_+^d$, $\R_+^d$ and finitely generated free groups on a von Neumann algebras  $M$ and prove a  version of  maximal ergodic inequality. Additionally, we establish non-commutative analogues of pointwise ergodic theorems for associated actions in the predual when M is finite.
	\end{abstract}

	\maketitle

	\section{Introduction}
	
	Dynamical systems studied in the context of measure preserving action have a long history starting from Birkhoff and von Neumann around 1930. Since then, the subject has been studied in numerous directions. Among them, the study of actions by multiparameter  semigroups and free groups 
	has attracted particular attention. This article is devoted to study ergodic theorem for such actions. 
	
	On the other hand, studying such ergodic theorems is also of paramount interest in the non-commutative probability theory and harmonic analysis. The groundbreaking work of Lance \cite{Lance1976} in 1976 yielded the first results in this direction. In this article, the author established a pointwise ergodic theorem for a state-preserving automorphism on a von Neumann algebra. Later, these results were extended to the case of a weak$^*$-continuous positive operator on von Neumann algebra by Kummerer in \cite{Kuemmerer1978} and to the case of one parameter semigroups of positive contractions by Conze and Dang-Ngoc in \cite{Conze1978}.

	In \cite{Yeadon1977} and   \cite{Yeadon1980},  the author considered a positive linear transformation $\alpha$ on $L^1(M, \tau)$ (here $M$ is a von Neumann algebra with f.n.s trace $\tau$) satisfying $0 \leq \alpha(X) \leq 1$ and a sub traciality condition (i.e, $\tau(\alpha(X)) \leq \tau(X)$) for all $X \in L^1(M, \tau) \cap M$ and $0 \leq X \leq 1$. Under these conditions, for all $X \in L^1(M, \tau)$ the  author proved  bilateral almost uniform convergence of the averages $A_n(X):= \frac{1}{n} \sum_{k=0}^{n-1} \alpha^k(X)$, which is the non commutative analogue of almost everywhere pointwise convergence for measure space. In the	process, he also established the maximal ergodic theorem in this context. In \cite{Junge2007}, Junge and Xu further extended the pointwise and maximal ergodic theorems for the action of $\R^d_+$ on  $L^p$ spaces for $1< p < \infty$.

	Later on, in \cite{Hong2021},  the authors considered a strongly continuous  action $ \alpha$ of G (of polynomial growth and having symmetric, compact generating set)  of  $\tau$-preserving automorphisms  on $M$, where $\tau$ is a f.n.s trace  on $M$. Then they  extended this action to $L^1(M, \tau)$
	and generalized the results of \cite{Yeadon1977} by proving the almost uniform convergence of the averages of the form $A_n(X):= \frac{1}{m(V^n)} \int_{V^n} \alpha_g(X) dm(g)$, for $ X \in L^1(M, \tau)$. 
	They obtained maximal ergodic theorem and various other individual ergodic theorems on tracial non-commutative $L^p$-spaces for the actions of locally compact  groups with polynomial growth.
	
	Very recently in \cite{Bik-Dip-neveu}, we studied pointwise convergence of ergodic averages for action of a group $G$ with polynomial growth. In this article, we considered an invariant state on a dynamical system $(M,G, \alpha)$, where $\alpha$ is sub-unital and then proved the pointwise ergodic theorem for the associated predual action on $L^1(M, \tau)$, where $\tau$ is a tracial state. 
	
	In the first part of the present article, we confine ourself to the case of multiparameter ergodic theorems, that is, the action of $\Z_+^d$ or $\R_+^d$ for an integer $d>1$. In \cite{Junge2007}, the authors also studied a maximal ergodic theorem and associated pointwise convergence of ergodic averages in noncommutative $L^p$ spaces for the action of $\Z_+^d$ or $\R_+^d$, where the action is assumed to be trace preserving.
	
	In the second part of the paper we study ergodic convergence of   spherical  averages associated to a sequence of   $w$-continuous maps $\sigma :=\{\sigma_n\}_{n=0}^\infty$  on a von Neumann algebra  $M$.  This was  inspired by the  study of the convergence of  spherical averages' associated  to a  free group action. To be precise let $\F_r$ denote the free group generated by $r$ elements and its inverses. Let $\phi$ be a homomorphism  from $\F_r$ to $Aut(M)$ and consider the spherical averages
	\begin{align*}
		\sigma_n(x) := \frac{1}{\abs{W_n}} \sum_{a \in W_n} \phi(a)(x), ~n \in \N,
	\end{align*}
	where $W_n$ is the set of elements of $\F_r$ of length $n$. Then it is interesting to determine the convergence of the sequence $S_n(x)= \frac{1}{n+1} \sum_{0}^{n} \sigma_k(x)$.

	Study of ergodic theorems for actions of groups which are not amenable were first initiated in classical setting by Arnold and Krylov in \cite{arnold1963equidistribution}. After that, it was carried out in great detail by many authors. We refer to \cite{grigorchuk1986individual}, \cite{guivarc1969generalisation}, \cite{nevo1994harmonic}, \cite{nevo1994generalization} and references therein. The free group action in the noncommutative setting was first considered by Walker in \cite{Walk:1997}. He proved that if the homomorphism $\phi$ is invariant under a faithful, normal state $\rho$, then for all $x \in M$ the sequence $S_n(x)$ will converge almost uniformly to an element  $\hat{x}$ in $M$. In fact, Walker dealt with more general maps on $M$. In particular, he considered a sequence $\{\sigma_n\}$ of completely positive maps on $M$ which preserves a faithful, normal state $\rho$ and satisfies $\sigma_1 \sigma_n= w \sigma_{n+1} + (1-w) \sigma_{n-1}$ for some $w \in (1/2,1]$ with $\sigma_0(x)=x$. Then he  proved the almost uniform convergence of $\frac{1}{n+1} \sum_{0}^{n} \sigma_k$ under some natural spectral condition on $\sigma_1$. In \cite{chilin2005few}, Walker's result was extended to $L^1(M, \tau)$, where $\tau$ is a faithful, normal, semifinite trace such that $\tau(\sigma_1(x)) \leq \tau(x)$ for all $x \in M \cap L^1(M, \tau)$ with $0 \leq x \leq 1$. 
	
	Later on, inspired by the seminal work of Junge and Xu (\cite{Junge2007}), in \cite{hu2008maximal}, Hu obtained a maximal ergodic theorem and associated individual ergodic theorem in  $L^p(M, \tau)$ corresponding to $\{\sigma_n\}$. In particular, he considered the sequence $\{\sigma_n\}$ as described above on $(M, \varphi)$, where $\varphi$ is a faithful, normal state, together with $\varphi \circ \sigma_n \leq \varphi$ and $\sigma_1 \circ \sigma_t^\varphi = \sigma_t^\varphi \circ \sigma_1$. Under these assumptions, Hu proved maximal and individual ergodic theorems associated with $\frac{1}{n+1} \sum_{0}^{n} \sigma_k$ in Haagerup non-commutative $L^p$ spaces for $1<p< \infty$. Furthermore, if $M$ is assumed to be semifinite with faithful, normal, semifinite trace $\tau$ (that is it is assumed that $\tau(\sigma_1(x)) \leq \tau(x)$ for all $x \in M \cap L^1(M, \tau)$ with $0 \leq x \leq 1$), then Hu recovered the result in \cite{chilin2005few} as mentioned above.

	In both cases,  the individual ergodic theorems  for state preserving maps on $L^1$ spaces  are not known. In this article, we address this situation and prove individual ergodic theorems  for the  state preserving  maps on $L^1$ spaces. 
	We like to point out that
	the techniques that are used in proving our  result are independent of the techniques that are used in \cite{Junge2007} or \cite{hu2008maximal}, and it reflects in the fact that trace preserving condition is not necessary to prove the  Theorem \ref{bau conv theorem-intro} and Theorem \ref{convg theorem in M_*s-free group-intro}.

	Now we highlight some of our important results. This article deals with ergodic averages corresponding to actions of $\Z_+^d$, $\R_+^d$, and finitely generated free groups. In these situations, we derive a suitable maximal ergodic inequality and use it to prove individual ergodic theorems. We first assume that $G$ is  either $\Z_+^d $ or  $\R_+^d $, where $d>1$. If $G=\Z_+^d$, we first consider $d$ many $w$-continuous commuting operators $T_1, \ldots, T_d$ on $M$ and then the associated action of $G$ is naturally defined as $ T_{( i_1, \cdots, i_d)} (\cdot) = T_1^{i_1} T_2^{i_2}\cdots T_d^{i_d} (\cdot)$ for $ ( i_1, \cdots, i_d) \in \Z_+^d$. If $G = \R_+^d$, then we consider a continuous action $T=\{T_t\}_{t \in \R_+^d}$ of $G$ on $M$. We will refer $(M, G, T)$ as a noncommutative dynamical system and we study the following averages.
	
	\begin{align*}
		M_a(\cdot):= 	
		\begin{cases}
			\frac{1}{a^d} \sum_{0 \leq i_1 <a} \cdots  \sum_{0 \leq i_d <a} T_1^{i_1} \cdots T_d^{i_d} (\cdot)&\text{ when  } G = \Z_+^d,~ a \in \N, \\\\
			\frac{1}{a^d} \int_{Q_a} T_t(\cdot) dt &  \text{  when } G = \R_+^d, ~a \in \R_+.
		\end{cases}
	\end{align*}
	
	Furthermore, if there exists a faithful normal state $\rho$ on $M$ such that $ \rho \circ T_g = \rho$, for all $g \in G$, then the quadruple $(G, M, T, \rho)$ is called kernel. Now  for the induced predual maps  we obtain the following maximal inequality.
	
	\begin{thm}\label{Maximal Inequality Real-1-intro}
		Let $G$ be either $\Z_+^d $ or  $\R_+^d $ and  $(M, G, T, \rho )$ be a kernel.		
		Let $\mu \in M_{* s}$ and $\epsilon>0$, then for any $N \in \N$ there exists a projection $e \in M$ such that $\rho(1-e) < \frac{\chi_d \norm{\mu}}{\epsilon} $ and 
		\begin{align*}
			\abs{M_a(\mu)(x)} \leq \epsilon \rho(x) \text{ for all } x \in (eMe)_+ \text{ and } 1 \leq a \leq N.
		\end{align*}
	\end{thm}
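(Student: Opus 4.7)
The plan is to combine a Jordan-type reduction with a one-parameter predual weak-type inequality, iterated over the $d$ commuting coordinate directions.

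\emph{Step 1 (Reduction to positive $\mu$).} Writing $\mu = \mu_+ - \mu_-$ with $\|\mu\| = \|\mu_+\| + \|\mu_-\|$, positivity of $M_a$ yields $|M_a(\mu)(x)| \leq M_a(\mu_+)(x) + M_a(\mu_-)(x)$ for $x \geq 0$. Hence it suffices to prove the inequality for each $\mu_\pm \geq 0$ with $\epsilon/2$ in place of $\epsilon$ and then set $e := e_+ \wedge e_-$.

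\emph{Step 2 (One-parameter case).} I would first treat $d = 1$: a single $w$-continuous positive $\rho$-preserving $T$ with Ces\`aro averages $A_n = \frac{1}{n}\sum_{k=0}^{n-1} T^k$. The goal is to show that for positive $\nu \in M_{*s}$, $\delta > 0$, and $N \in \N$, there exists a projection $f$ with $\rho(1-f) < c_1 \|\nu\|/\delta$ and $A_{n*}\nu(x) \leq \delta \rho(x)$ for $x \in (fMf)_+$ and $1 \leq n \leq N$. This is the state-preserving analogue of Yeadon's weak-type $(1,1)$ inequality, and I would prove it by a Calder\'on--Zygmund-style decomposition $\nu = \nu' + \nu''$ obtained via Sakai's noncommutative Radon-Nikodym theorem: the piece $\nu' \leq \delta_0 \rho$ is automatically controlled by $\rho$-preservation, while a spectral projection associated with the Radon-Nikodym element of $\nu''$ produces the desired $f$.

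\emph{Step 3 (Multiparameter iteration).} Since $\{T_i\}$ commute, $M_a = A_{a,1} \cdots A_{a,d}$ with $A_{a,i} = \frac{1}{a}\sum_{k=0}^{a-1} T_i^k$, so $M_{a*}\mu = A_{a,d*} \cdots A_{a,1*}\mu$. I would iterate Step 2 across the $d$ directions, constructing projections $e_1, \ldots, e_d$ and setting $e = e_1 \wedge \cdots \wedge e_d$. At the $k$-th stage, apply the one-parameter inequality in direction $k$ to an auxiliary positive functional of norm at most $\|\mu\|$ that encodes the contribution of directions $1,\ldots,k-1$ uniformly in $a$, so each stage adds at most $\chi_1 \|\mu\|/\epsilon$ to $\rho(1-e)$, giving the cumulative constant $\chi_d$. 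The $\R_+^d$ case is then reduced to the $\Z_+^d$ case by approximating $\frac{1}{a^d}\int_{Q_a} T_t(x)\,dt$ by Riemann sums of commuting $\rho$-preserving operators and passing to the limit using strong continuity of $\{T_t\}$.

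\emph{Main obstacle.} The principal difficulty lies in Step 3: the projections $e_k$ produced in one coordinate are generally \emph{not} invariant under the averaging operators of other coordinates, so a naive iteration does not close. Overcoming this requires a careful design of the auxiliary functionals so that the one-parameter inequality is applied to a single positive functional (rather than a family indexed by the remaining coordinates), exploiting the commutativity $T_i T_j = T_j T_i$ to transfer estimates between directions without inflating $\rho(1-e)$ beyond $\chi_d \|\mu\|/\epsilon$. A secondary technical issue is the non-tracial setting, which forces us to replace tracial tools by Sakai's Radon-Nikodym and spectral calculus in the standard representation of $M$.
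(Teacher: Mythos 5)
Your Step 3 contains the decisive gap. You correctly identify that the exceptional projection produced in one coordinate direction is not invariant under the averages in the other directions, but the repair you gesture at --- an auxiliary positive functional of norm at most $\norm{\mu}$ that encodes the contribution of directions $1,\dots,k-1$ uniformly in $a$ --- does not exist at the weak-type $(1,1)$ level. Such a functional would have to dominate the family $A_{a,1*}\cdots A_{a,k-1*}\mu$, $1\le a\le N$, in the order of $M_{*+}$ with norm bounded independently of $N$; that is exactly boundedness of the maximal operator on the predual (an $L^1$-type space), which fails already for $d=1$ in the classical setting. Only the weak-type inequality holds, and weak-type inequalities do not iterate across commuting directions --- this is the well-known reason multiparameter $L^1$ ergodic theory cannot be run coordinate-by-coordinate. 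The paper avoids iteration entirely: following Brunel--Krengel, Theorem \ref{erg inq 1} builds from $T_1,\dots,T_d$ a \emph{single} positive contraction $U=\sum_{u\in\Z_+^d}a(u)\,T_1^{u_1}\cdots T_d^{u_d}$ (coefficients coming from the Taylor expansion of $\xi(x)=1-\sqrt{1-x}$, Lemma \ref{Krengel1985}) whose one-parameter Ces\`aro averages dominate the full multiparameter averages, $\frac{1}{n^d}\sum_{0\le i_1,\dots,i_d<n}T_1^{i_1}\cdots T_d^{i_d}f\le \frac{\chi_d}{n_d}\sum_{j=0}^{n_d-1}U^jf$ for $f\ge 0$. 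Since $U$ is again positive, sub-unital and $\rho$-preserving, one application of the one-parameter maximal inequality (Theorem \ref{maximal lemma}) produces a single projection $e$ valid for all directions simultaneously, with the constant $\chi_d$. Note also that Theorem \ref{maximal lemma} is proved by Yeadon's variational argument (maximizing a $w$-continuous functional over a $w$-compact subset of $\bigoplus_{n=1}^{2N}M$), not by a Calder\'on--Zygmund splitting; your proposed decomposition $\nu=\nu'+\nu''$ with $\nu'\le\delta_0\rho$ is itself problematic, since $M_{*+}$ is not a lattice for nonabelian $M$ and, absent tracial tools, there is no mechanism to control the averages of $\nu''$ off a small projection.

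Your reduction of $\R_+^d$ to $\Z_+^d$ also leaks. Riemann-sum discretizations with mesh $1/m$ each yield their own exceptional projection $e_m$, and these need not converge in any sense that produces a projection, so no single $e$ survives the limit; moreover the estimate is needed uniformly over the continuum $1\le a\le N$. The paper's route (Theorem \ref{erg ieq 2}) requires no limiting procedure: by positivity, with $n=[a]$ one has $M_a(f)\le \frac{1}{n^d}\sum_{0\le j_1,\dots,j_d<n+1}S_1^{j_1}\cdots S_d^{j_d}\bigl(M_1(f)\bigr)$, where $S_i$ are the time-one maps in each coordinate and $M_1(f)=\int_{[0,1)^d}T_t(f)\,dt$ is one fixed positive map; the continuous case thus reduces to a single application of the discrete Brunel domination and hence of Theorem \ref{maximal lemma}.
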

	
	We then use Theorem \ref{Maximal Inequality Real-1-intro} to deduce the following individual ergodic theorem.
	
	\begin{thm}\label{bau conv theorem-intro}
		Let $M$ be a finite von Neumann algebra with a faithful normal trace $\tau$ and  $(M, G,  T, \rho)$ be a kernel. Then for all $Y \in L^1(M, \tau)$, there exists $\overline{Y} \in L^1(M, \tau)$ such that $M_a(Y)$ converges to $\overline{Y}$ bilaterally almost uniformly.
	\end{thm}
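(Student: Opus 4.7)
The plan is a Banach-principle argument: use the maximal inequality of Theorem \ref{Maximal Inequality Real-1-intro} to show that the set of $Y \in L^1(M,\tau)$ for which $M_a(Y)$ converges bilaterally almost uniformly is norm-closed, and verify convergence separately on a norm-dense subspace. Since $M$ is finite, the pairing $Y \mapsto \tau(Y\,\cdot)$ identifies $L^1(M,\tau)$ isometrically with the predual $M_*$, so the maximal inequality transfers directly to $L^1(M,\tau)$. Moreover $\rho$ and $\tau$ are both faithful normal on the finite algebra $M$ and hence induce the same topology of convergence in measure, so b.a.u.\ convergence measured via either is the same notion and the quantitative $\rho$-control produced by the maximal inequality suffices.

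For closedness of $\mathcal{C} := \{Y : M_a(Y) \text{ converges b.a.u.}\}$, take $Y_k \in \mathcal{C}$ with $Y_k \to Y$ in $L^1(M,\tau)$. Decomposing $Y-Y_k$ into self-adjoint real and imaginary parts and applying Theorem \ref{Maximal Inequality Real-1-intro} at each scale $N$ yields projections $e_{k,N}$ with $\rho(1-e_{k,N})$ arbitrarily small, uniformly in $a \leq N$. Passing to decreasing meets over $N$ (the net has monotone limit since $M$ is finite) gives a single projection on which $|M_a(Y-Y_k)|$ is uniformly small in $a$. Combining this with b.a.u.\ Cauchyness of each $M_a(Y_k)$ via a $3\epsilon$-argument shows $\{M_a(Y)\}_a$ is b.a.u.\ Cauchy, hence convergent. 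For the dense subspace, $M \subset L^1(M,\tau)$ is dense; I would show $M \subset \mathcal{C}$ using the commuting factorization
\[
M_a = \prod_{j=1}^{d}\Big(\tfrac{1}{a}\sum_{i=0}^{a-1} T_j^i\Big)
\]
(with an integral analogue for $G = \R_+^d$) together with the mean ergodic theorem for each $T_j$ on the GNS space $L^2(M,\rho)$. Any $Y \in M$ then decomposes as $Y_{\mathrm{inv}} + \sum_j (W_j - T_j W_j) + R$, where $Y_{\mathrm{inv}}$ is fixed by every $T_j$ (hence by $M_a$), each coboundary $W_j - T_j W_j$ produces a telescoping $j$-th averaging factor of operator norm $O(1/a)$ (hence b.a.u.-negligible), and the remainder $R$ has arbitrarily small $L^2(M,\rho)$-norm.

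The main obstacle will be transferring $L^2(M,\rho)$-smallness of the remainder $R$ into a b.a.u.\ estimate for $M_a(R)$: the maximal inequality is formulated for the $M_*$-norm, so one must bound $\|R\|_{M_*}$ by $\|R\|_{L^2(M,\rho)}$ (this is controlled since $\rho$ is a state and the pairing gives $|\rho(Rx)| \leq \|R\|_{2,\rho}\|x\|$ for $x \in M$), or alternatively perform an operator-norm truncation before averaging and absorb the truncation error through the maximal inequality a second time. Once this is handled, combining closedness of $\mathcal{C}$ with $M \subset \mathcal{C}$ yields $\mathcal{C} = L^1(M,\tau)$, proving the theorem; the $\Z_+^d$ and $\R_+^d$ cases run in parallel with only notational changes in the averaging factor.
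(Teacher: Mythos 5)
Your overall architecture (maximal inequality plus convergence on a dense set, glued by a Banach-principle closedness argument) matches the paper's in outline, but both halves of your execution have genuine gaps. The fatal one is in the dense-subspace step: the hypothesis is only that $\rho$ is $T$-invariant, \emph{not} $\tau$, so the action of $M_a$ on $L^1(M,\tau)\cong M_*$ is the predual action $\mu\mapsto\mu\circ M_a$, and for $Y\in M$ the element $M_a(Y)$ is \emph{not} $\frac{1}{a^d}\sum T_1^{i_1}\cdots T_d^{i_d}(Y)$. Your telescoping of coboundaries $W_j-T_jW_j$ in the operator norm of $M$ therefore applies the averages to the wrong operators; a coboundary decomposition must be made at the level of functionals. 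Even granting that, telescoping a functional coboundary only yields $\norm{M_a(\nu)}_{M_*}=O(1/a)$, i.e.\ norm convergence — which is exactly Theorem \ref{mean ergodic thm} and is \emph{not} enough for b.a.u.\ convergence on the dense set. The paper's key idea, which your proposal is missing, is to take the dense set $\CW$ of averaged coboundaries built from $\nu\leq\lambda\rho$ (via Theorem \ref{dense subset of normal functionals}), represent such $\nu$ by a commutant element $y_1'\in M_+'$ (Lemma \ref{au conv on dense set-0}), and transfer the averaging to $M'$ (Lemma \ref{au conv on dense set-1}), obtaining the much stronger \emph{uniform $\rho$-weighted} estimate $\sup_{x\in M_+}\abs{(B_l(\nu_k)-\bar\nu)(x)}/\rho(x)\leq\norm{B_l'(y_1'-B_k'(y_1'))}\to 0$ of Lemma \ref{au conv on dense set 2}. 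Your proposed fix for the remainder also fails: the inequality $\abs{\rho(Rx)}\leq\norm{R}_{2,\rho}\norm{x}$ bounds the $\rho$-pairing, whereas the element of $L^1(M,\tau)$ is paired with $M$ via $\tau$, and $\norm{R}_{L^1(\tau)}$ is not controlled by $\norm{R}_{2,\rho}$ (the density $d\rho/d\tau$ can be small on a $\tau$-large spectral piece).

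Two further steps are wrong as stated. First, in your closedness argument, the projections $e_{k,N}$ produced by Theorem \ref{Maximal Inequality Real-1-intro} at different scales $N$ are not nested, and each has defect up to $\chi_d\norm{\mu}/\epsilon$ — a bound that does \emph{not} improve with $N$ — so ``passing to decreasing meets over $N$'' loses all control (the meet can even be $0$); finiteness of $M$ gives no monotone limit here. The repair is the paper's summable-defect scheme in Theorem \ref{convg theorem in M_*s}: approximate $\mu$ by $\nu_{a_j}\in\CW$ with $\norm{\mu-\nu_{a_j}}<4^{-j}/c$ and apply the maximal inequality with tolerance $\sim 2^{-j}$, so the defects $\rho(1-e_j)<2^{-j}$ are summable and a single meet survives for all $a$ simultaneously. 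Second, your claim that $\rho$ and $\tau$ ``induce the same topology of convergence in measure'' is unjustified: $\rho$ is a (generally non-tracial) state and b.a.u.\ is defined via $\tau$. The passage from $\rho$-defects to $\tau$-defects, and from the weighted bound $\abs{M_a(\mu)(x)}\leq\epsilon\rho(x)$ to the operator-norm conclusion $\norm{e(M_a(Y)-\overline{Y})e}\leq\delta$, requires the spectral truncation $q_{s_0}=\chi_{(1/s_0,s_0)}(X)$ of the Radon--Nikodym derivative $X=d\rho/d\tau$, which forces $\rho\leq s_0\tau$ on the compressed corner so that Lemma \ref{infinite norm of pYp-sa} applies — none of which is automatic from finiteness of $M$.
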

	
	Next  we consider a generalized noncommutative dynamical system $(M, \sigma)$ where $\sigma = \{\sigma_n\}_{n=0}^\infty$ is a sequence of $w$-continuous maps on $M$ satisfying
	\begin{enumerate}\label{recurrence}
		\item[$A_1$.] $\sigma_1$ is completely positive, $\sigma_1(1) \leq 1$ and $\sigma_n$ is positive.
		\item[$A_2$.] $\sigma_1 \circ \sigma_n = w \sigma_{n+1} + (1-w) \sigma_{n-1}$ for all $n \in \N$ and $\sigma_0(x)=x$, where $1/2 < w < 1$,
	\end{enumerate}
	and consider the following averages.
	\begin{align*}
		S_n(x)= \frac{1}{n+1} \sum_{0}^{n} \sigma_k(x), ~ n\in \N.
	\end{align*}
	
	We further define a generalized kernel as a triple $(M, \sigma, \rho)$ , where $\rho$ is a faithful, normal state satisfying $\rho \circ \sigma_1 = \sigma_1  \text{ and } \rho(y \sigma_1(x)) = \rho(\sigma_1(y) x) \text {for all }   x, y\in M$. Such maps naturally arise to study the convergence of spherical averages associated to actions of finitely generated free groups.

	Then again  for the induced predual maps  we obtain the following maximal inequality and individual ergodic theorem.
	
	\begin{thm}\label{Maximal Inequality Real-free group-intro}
		Let 	 $(M , \sigma )$  be a generalized  noncommutative dynamical system and 
		$\rho$ be a f.n. state on $M$ such that $\rho \circ \sigma_1= \rho$. Further assume that $\mu \in M_{* s}$ and $\epsilon>0$. Then for any $N \in \N$ there exists a projection $e \in M$ such that $\rho(1-e) < \frac{C_w \norm{\mu}}{\epsilon} $ and 
		\begin{align*}
			\abs{S_n(\mu)(x)} \leq \epsilon \rho(x) \text{ for all } x \in (eMe)_+ \text{ and } 1 \leq n \leq N.
		\end{align*}
	\end{thm}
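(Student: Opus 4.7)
The plan is to mirror the proof of Theorem~\ref{Maximal Inequality Real-1-intro}, with the multiparameter semigroup structure replaced by the recurrence~$A_2$.

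First I reduce to $\mu\ge 0$: writing $\mu=\mu_+-\mu_-$ for the Jordan decomposition with $\|\mu_+\|+\|\mu_-\|=\|\mu\|$, it suffices to solve the claim for each of $\mu_\pm$ separately (with $\epsilon$ halved and $C_w$ doubled), and to take $e:=e_+\wedge e_-$, so that for $x\in(eMe)_+$ one has $x=e_\pm xe_\pm\in(e_\pm Me_\pm)_+$ and therefore $|S_n(\mu)(x)|\le S_n(\mu_+)(x)+S_n(\mu_-)(x)\le\epsilon\rho(x)$, together with $\rho(1-e)\le\rho(1-e_+)+\rho(1-e_-)$.

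Next, the hypotheses upgrade $\sigma_1$ to a unital map: from $\sigma_1(1)\le 1$, $\rho\circ\sigma_1=\rho$, and faithfulness of $\rho$, one gets $\sigma_1(1)=1$; induction along $A_2$ then yields $\sigma_n(1)=1$ and $\rho\circ\sigma_n=\rho$ for every $n\in\N$. In particular, $S_n$ is positive, unital and $\rho$-invariant, so its predual $S_n:M_*\to M_*$ is positive and isometric on $M_*^+$. For each fixed $n$ the Hahn--Jordan decomposition of the self-adjoint normal functional $S_n(\mu)-\epsilon\rho$ on $M$ already produces a projection $q_n$ with $S_n(\mu)(x)\le\epsilon\rho(x)$ for $x\in((1-q_n)M(1-q_n))_+$ and $\rho(q_n)\le S_n(\mu)(q_n)/\epsilon\le\|\mu\|/\epsilon$.

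To assemble a \emph{single} projection $e$ controlling all $n\in\{1,\dots,N\}$ simultaneously, I plan to run a Cuculescu-type inductive construction directly in the predual: starting from $p_0=1$, let $p_n\le p_{n-1}$ be the Hahn--Jordan projection associated to $\epsilon\rho-S_n(\mu)$ cut down to $p_{n-1}Mp_{n-1}$, and set $e:=p_N$. Then $\rho(1-e)=\sum_{n=1}^N\rho(p_{n-1}-p_n)$ with each increment controlled by $\rho(p_{n-1}-p_n)\le S_n(\mu)(p_{n-1}-p_n)/\epsilon$. The delicate point is to telescope these bounds using $A_2$: writing $\sigma_n=(n+1)S_n-nS_{n-1}$ and then invoking $\sigma_1\sigma_n=w\sigma_{n+1}+(1-w)\sigma_{n-1}$ expresses the bad mass added at step $n+1$ in terms of $\sigma_1$-images of the bad masses at steps $n$ and $n-1$; the invariance $\rho\circ\sigma_1=\rho$ ensures that those $\sigma_1$-images do not inflate the $\rho$-mass, and the factor $1/w$ produced by each inversion of the recurrence compounds to the constant $C_w$. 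Once $\rho(1-p_N)\le C_w\|\mu\|/\epsilon$ is in hand, the inequality $S_n(\mu)(x)\le\epsilon\rho(x)$ for $x\in(p_NMp_N)_+$ follows from $p_N\le p_{n-1}$ together with the defining property of $p_n$.

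The main obstacle is precisely this last telescoping step. Unlike in~\cite{Junge2007,hu2008maximal}, I cannot pass to an $L^p$-density of $\mu$ against a trace; the argument must remain in the predual. The $\sigma_1$-invariance of $\rho$ is indispensable, since it is what makes $\sigma_1$-images of ``bad'' spectral pieces have the same $\rho$-measure as the originals, and the polynomial character of $\sigma_n$ as a function of $\sigma_1$ is what converts the recurrence into the quantitative estimate $C_w$ on the cumulative $\rho$-mass; extracting the exact value of $C_w$ from the inversion of $A_2$ is where the bookkeeping lies.
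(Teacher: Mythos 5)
Your reduction to $\mu\geq 0$ is fine, your observation that $\sigma_1(1)=1$ (hence $\sigma_n(1)=1$ and $\rho\circ\sigma_n=\rho$ by induction along $A_2$) is correct, and the per-$n$ Hahn--Jordan step does give, for each fixed $n$, a projection $q_n$ with $S_n(\mu)(x)\leq\epsilon\rho(x)$ on $((1-q_n)M(1-q_n))_+$ and $\rho(q_n)\leq\|\mu\|/\epsilon$. But the proof has a genuine gap exactly where you flag it: the telescoping step that should convert the increments $\rho(p_{n-1}-p_n)\leq S_n(\mu)(p_{n-1}-p_n)/\epsilon$ into a bound $\rho(1-p_N)\leq C_w\|\mu\|/\epsilon$ is never carried out, and it is not merely bookkeeping. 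Summing the increments naively gives $N\|\mu\|/\epsilon$, and the recurrence $A_2$ cannot rescue this: it controls compositions of the \emph{maps} $\sigma_n$, whereas what you need is control of the evaluations $S_n(\mu)(p_{n-1}-p_n)$ at projections that are themselves defined through $S_n(\mu)$; the invariance $\rho\circ\sigma_1=\rho$ says nothing about how the functionals $\mu\circ\sigma_1^l$ distribute mass over the pieces $p_{n-1}-p_n$. Even in the commutative case with a single measure-preserving map, a level-by-level Hahn decomposition plus telescoping does not yield the weak $(1,1)$ maximal inequality --- one needs a genuinely simultaneous device (Hopf's maximal ergodic lemma, or a covering argument), and your Cuculescu-type induction contains no substitute for it.

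The paper's proof avoids this difficulty by a two-step reduction that you should compare with your plan. First, Theorem \ref{erg ieq 2-free group} establishes a pointwise domination valid in any ordered Banach space: since $\sigma_1^n=\sum_{l=0}^{n}a_n(l)\sigma_l$ is a convex combination, coefficient lower bounds in the style of \cite{nevo1994generalization} give a constant $C_w$ with
\begin{align*}
\frac{1}{n+1}\sum_{l=0}^{n}\sigma_l(\mu)\ \leq\ C_w\,\frac{1}{3n+1}\sum_{l=0}^{3n}\sigma_1^{l}(\mu)
\end{align*}
for $\mu\in M_{*+}$; this is where the recurrence $A_2$ is actually used, at the level of operator identities rather than of bad sets. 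Second, Theorem \ref{maximal lemma} (the Yeadon-type maximal inequality for the single sub-unital, $\rho$-preserving map $T=\sigma_1$, proved by maximizing a $w$-continuous functional over a $w$-compact convex set in $\bigoplus_{n=1}^{2N}M$) supplies \emph{one} projection $e$ with $\rho(1-e)<\|\mu\|/\epsilon'$ controlling all Ces\`{a}ro averages of $\sigma_1$ up to rank $3N+1$ simultaneously; applying it with $\epsilon'=\epsilon/C_w$ and combining with the domination yields the statement. The variational argument of Theorem \ref{maximal lemma} is precisely the simultaneity mechanism your construction is missing, so the correct repair of your proof is to replace the inductive telescoping by the domination of $S_n$ through powers of $\sigma_1$ and then invoke that single-operator maximal inequality.
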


	\begin{thm}\label{convg theorem in M_*s-free group-intro}
		Let $M$ be a finite von Neumann algebra with f.n trace $\tau$ and 
		let  $(M, \sigma, \rho)$  be a generalized  kernel. Then for all $Y \in L^1(M, \tau)$, there exists $\overline{Y} \in L^1(M, \tau)$ such that $S_n(Y)$ converges to $\overline{Y}$ bilaterally almost uniformly.
	\end{thm}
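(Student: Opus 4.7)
Since $M$ is finite, the predual $M_*$ identifies isometrically with $L^1(M,\tau)$, and self-adjoint elements of $M_*$ correspond to self-adjoint elements of $L^1(M,\tau)$ via the pairing $\phi(x) = \tau(Y_\phi x)$. Under this identification, the maximal inequality of Theorem \ref{Maximal Inequality Real-free group-intro} becomes a maximal estimate for the sequence $S_n$ on self-adjoint elements of $L^1(M,\tau)$. My plan is a noncommutative Banach-principle argument: first exhibit a dense subclass of $L^1(M,\tau)_{sa}$ on which $S_n(Y)$ converges bilaterally almost uniformly, then extend to all of $L^1(M,\tau)$ by approximation, using the maximal inequality to absorb the error.

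For the dense set, the hypothesis $\rho(y\sigma_1(x)) = \rho(\sigma_1(y)x)$ makes $\sigma_1$ a self-adjoint contraction on the GNS Hilbert space $L^2(M,\rho)$, and the recurrence $A_2$ forces $\sigma_n$ to be a fixed polynomial in $\sigma_1$. In particular $\sigma_n(X) = X$ whenever $\sigma_1(X) = X$, so $S_n$ acts as the identity on the fixed-point subspace of $\sigma_1$. Summing the recurrence explicitly yields an identity of the form $S_n(I-\sigma_1)(X) = \tfrac{1}{n+1}\bigl(wX + (1-w)\sigma_n(X) - w\sigma_{n+1}(X)\bigr)$, and since each $\sigma_k$ is a contraction on $M$, this tends to $0$ in operator norm. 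Hence on the subspace $V := (I-\sigma_1)(M) + \{X \in M : \sigma_1(X) = X\}$, the sequence $S_n$ converges in operator norm, and \emph{a fortiori} bilaterally almost uniformly. Standard spectral decomposition of $\sigma_1$ on $L^2(M,\rho)$, together with the $L^1(M,\tau)$-density of $M$ (which holds as $\tau$ is finite), then shows $V$ is dense in $L^1(M,\tau)_{sa}$.

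To conclude, given $Y \in L^1(M,\tau)_{sa}$ and $\epsilon > 0$, decompose $Y = Y_k + Z_k$ with $Y_k \in V$ and $\|Z_k\|_1 \leq 4^{-k}\epsilon$; Theorem \ref{Maximal Inequality Real-free group-intro} applied to each $Z_k$ at an appropriate scale provides projections controlling $\sup_n |S_n(Z_k)|$, while $S_n(Y_k)$ converges uniformly to some $\overline{Y_k} \in M$. A diagonal selection yields a common projection, and a Cauchy argument produces a limit $\overline{Y} \in L^1(M,\tau)$ with $S_n(Y) \to \overline{Y}$ bilaterally almost uniformly; the general, non-self-adjoint case follows by splitting into real and imaginary parts. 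The main obstacle I expect is reconciling the two normal states involved: Theorem \ref{Maximal Inequality Real-free group-intro} produces projections small in the state $\rho$, whereas bilateral almost uniform convergence in the statement is naturally measured against $\tau$, and showing that $V$ is dense in $L^1(M,\tau)$ rather than merely in $L^2(M,\rho)$ requires careful use of faithfulness and normality of both $\rho$ and $\tau$ on the finite von Neumann algebra $M$.
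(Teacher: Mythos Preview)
Your Banach-principle outline is the same skeleton the paper uses, and the two obstacles you flag at the end are exactly the ones that matter. However, there is a genuine gap in how you set up the ``good'' dense set.

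The averages $S_n$ in the statement act on $L^1(M,\tau)\cong M_*$ via the \emph{predual} action $\mu\mapsto\mu\circ\sigma_k$; under the identification $Y\leftrightarrow\tau(Y\,\cdot\,)$ this is the $\tau$-adjoint of $\sigma_k$, not $\sigma_k$ itself. Your telescoping identity and the conclusion ``$S_n\to 0$ in operator norm on $(I-\sigma_1)(M)$'' use the original maps $\sigma_k:M\to M$. The two coincide on $M\subset L^1(M,\tau)$ only when $\tau(\sigma_1(y)x)=\tau(y\,\sigma_1(x))$, i.e.\ when $\sigma_1$ is $\tau$-symmetric, and the generalized kernel hypothesis gives only $\rho$-symmetry. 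So operator-norm convergence of the original $S_n$ on your set $V$ does not, as stated, yield b.a.u.\ convergence of the predual averages the theorem is about. Relatedly, your density claim for $V$ in $L^1(M,\tau)$ via the spectral decomposition on $L^2(M,\rho)$ does not go through: that gives density in $L^2(M,\rho)$, and there is no direct passage to the $L^1(M,\tau)$-norm without further input.

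The paper circumvents both issues by moving the telescoping to the commutant. Using the $\rho$-pairing one defines maps $\sigma_n'$ on $M'$ by $\langle\sigma_n'(y')x\Omega_\rho,\Omega_\rho\rangle=\langle y'\sigma_n(x)\Omega_\rho,\Omega_\rho\rangle$; these satisfy the same recurrence, and one proves $\lVert S_n'(y'-\sigma_k'(y'))\rVert\to 0$ in the operator norm of $M'$ (this is your telescoping, but on the correct side). This translates into the uniform estimate $\sup_{x\in M_+,\,x\neq 0}|(S_n(\nu)-\bar\nu)(x)|/\rho(x)\to 0$ for every $\nu$ in the set $\CW:=\{\nu-S_k(\nu)+\bar\nu:\ \nu\in M_{*+},\ \nu\le\lambda\rho\}-\{\text{same}\}$, whose density in $M_{*s}$ follows from the mean ergodic theorem on $M_*$ together with the density of $\{\nu\le\lambda\rho\}$, not from $L^2(M,\rho)$ spectral theory. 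The $\rho$-versus-$\tau$ obstacle is then handled exactly as you suspected it must be: write $\rho=\tau(X\,\cdot\,)$, cut down by $q_s=\chi_{(1/s,s)}(X)$ so that $\rho\le s\,\tau$ on $q_sMq_s$, and use that $\rho(1-e_j)\to 0$ forces $\tau(1-e_j)\to 0$ along a subsequence.
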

	\noindent	
	For the definition of convergence in bilaterally almost uniformly of a sequence see Definition \ref{bilat}.

	Now we describe the layout of this article.  In \S2, we collect necessary definitions of non-commutative $L^p$ spaces, bilateral almost uniform convergence and the actions of semigroups and finitely generated free groups which will be essential for the subsequent sections. In sections \S3, \S4 and \S5, we establish mean, maximal and individual ergodic theorems, for the actions of $\Z^d_+$ and $\R^d_+$. Finally, in \S6, we study the similar theorems associated with a sequence of $w$-continuous positive contractions on a von Neumann algebra. This sequence of maps resonates with the spherical averages corresponding to a finitely generated free group.

	\section{Preliminaries}
	
	Throughout this article, $M$ will denote a von Neumann algebra acting on a separable Hilbert space $\CH$. We will write $M_s$ and $M_+$ to designate the collection of self-adjoint and positive elements of $M$, respectively. The projection lattice of $M$ will be denoted by $\CP(M)$.
	
	$M_*$ will represent the predual of $M$. It is a norm closed subspace of Banach space dual of  $M$. Similar to above, the  self-adjoint and positive elements of $M_*$ will be denoted by $M_{*s}$ and $M_{*+}$, respectively. 
	
	Let $\tau$ be a faithful, normal tracial weight on $M$.  A closed densely defined operator $X: \mathcal{D}(X)\subseteq \CH \to \CH$  is called  affiliated to $M$ if $u'X=Xu'$ for all unitary $u'$ in $M'$, where $M'$ is  the commutant of $ M$ in $\CB(\CH)$. If $X$ is affiliated to M, we will denote it by $X \eta M$. Now an operator $X \eta M$ is called $\tau$-measurable if for every $\epsilon >0$, there exists $e \in \CP(M)$ such that $e \CH \subseteq \mathcal{D}(X)$ and $\tau(1-e)< \epsilon$. The set of all $\tau$-measurable operators on $M$ is denoted by $L^0(M, \tau)$. The set of positive operators in $L^0(M, \tau)$ will be denoted by $L^0(M, \tau)_+$.
	
	The trace $\tau$ primarily defined on $M_+$ can be extended to $L^0(M, \tau)_+$ as an additive, positive homogeneous $(\text{ i.e, }\tau(\alpha A + B) = \alpha \tau(A) + \tau(B) \text{ for } A,B \in L^0(M, \tau)_+ \text{ and } \alpha>0 )$ function by the following formula.
	
	\noindent For $X \in L^0(M, \tau)_+$,
	\begin{align*}
		\tau(X) := \int_{0}^{\infty} \lambda d \tau(e_{\lambda}),
	\end{align*}
	where $X= \int_{0}^{\infty} \lambda d(e_{\lambda})$ is the spectral decomposition.
	
	\begin{defn}
		For $0< p \leq \infty$, the non-commutative tracial $L^p$-space on $(M, \tau)$ is defined by 
		\[ L^p(M,\tau):= 
		\begin{cases}
			\lbrace X \in L^0(M, \tau) : \norm{X}_p:= \tau(\abs{X}^p)^{1/p} < \infty \rbrace  & \text{ for } p \neq \infty, \\
			(M, \norm{\cdot}) & \text{ for } p= \infty
		\end{cases}
		\]
	\end{defn}
	
	With the definition above, $L^p(M,\tau)$ satisfies many properties similar to $L^p$ spaces defined on a measure space. In this article, we confine ourself to the non-commutative $L^1$ spaces. The following proposition will play important role in the sequel.
	
	\begin{prop}
		The map $\Psi: L^1(M, \tau) \to M_* $ defined by $\Psi(X)(a)= \tau(Xa)$ for $X \in L^1(M, \tau), a \in M$ is a surjective linear isometry. Furthermore,  $X$ is positive if and only if $\Psi(X)$ is positive.
	\end{prop}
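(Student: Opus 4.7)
The plan is to verify four assertions in turn: that $\Psi$ takes values in $M_*$ (not merely $M^*$), that $\Psi$ is a contraction, that it is norm-preserving, and that surjectivity and the positivity equivalence hold. Well-definedness of $\tau(Xa)$ and the bound $\|\Psi(X)\|_{M_*}\leq \|X\|_1$ come from the noncommutative H\"older inequality: for $X\in L^1(M,\tau)$ and $a\in M$, one has $Xa\in L^1(M,\tau)$ with $\|Xa\|_1\leq \|X\|_1\|a\|$, whence $|\tau(Xa)|\leq \|X\|_1\|a\|$. To see that $\Psi(X)$ is actually normal, I would first handle $X\in L^1(M,\tau)_+$: the identity $\tau(Xa)=\tau(X^{1/2}aX^{1/2})$ (justified by cyclicity of the tracial weight $\tau$) together with normality of $\tau$ on increasing nets of positive elements shows that $\Psi(X)$ is positive normal. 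The general case then follows by polar decomposition and linearity, using that $M_*$ is norm-closed in $M^*$.

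For the reverse norm inequality I would use the polar decomposition $X=u|X|$, where $u\in M$ is a partial isometry with $u^*u$ equal to the support projection of $|X|$. Taking $a=u^*\in M$, which has norm at most one, gives
\[
\Psi(X)(u^*)=\tau(u|X|u^*)=\tau(|X|u^*u)=\tau(|X|)=\|X\|_1
\]
by the tracial property, so $\|\Psi(X)\|_{M_*}\geq \|X\|_1$. Combined with the H\"older bound this yields the isometry, and in particular injectivity of $\Psi$.

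The main obstacle is surjectivity. Given $\phi\in M_*$, I would first use the Jordan decomposition to write $\phi$ as a real linear combination of positive normal functionals, thereby reducing to the case $\phi\in M_{*+}$. The essential input is then the noncommutative Radon--Nikodym theorem (Sakai, Pedersen--Takesaki) applied to $\phi$ relative to the faithful normal semifinite trace $\tau$: it produces a unique positive $\tau$-measurable operator $X$ affiliated with $M$ with $\phi(a)=\tau(Xa)$ for all $a\in M$. The finiteness $\phi(1)=\|\phi\|<\infty$ translates precisely into $\tau(X)<\infty$, so $X\in L^1(M,\tau)_+$. Reassembling the Jordan pieces by linearity furnishes a preimage for a general $\phi$.

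Preservation of positivity is then a short wrap-up. If $X\in L^1(M,\tau)_+$ and $a\in M_+$, then $\tau(Xa)=\tau(X^{1/2}aX^{1/2})\geq 0$, so $\Psi(X)\geq 0$. Conversely, if $\Psi(X)\geq 0$, the surjectivity step (whose construction delivers a positive density for a positive functional) yields $Y\in L^1(M,\tau)_+$ with $\Psi(Y)=\Psi(X)$; by injectivity, which we already have from the isometry, $X=Y\geq 0$.
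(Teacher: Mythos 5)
The paper states this proposition without proof, presenting it as a standard fact in the preliminaries, so there is no in-paper argument to compare against; your proposal is a correct reconstruction of the classical proof (as found, e.g., in Takesaki or in Hiai's lecture notes cited by the paper). Two small points are worth tightening. First, for normality of $\Psi(X)$ with $X\in L^1(M,\tau)_+$, the increasing-net argument via $\tau(Xa)=\tau(X^{1/2}aX^{1/2})$ does work, but it silently uses normality of the \emph{extended} trace on $L^0(M,\tau)_+$ together with $X^{1/2}a_iX^{1/2}\nearrow X^{1/2}aX^{1/2}$ in that cone; the cleanest route is to observe that $\Psi(X)(a)=\inner{aX^{1/2}}{X^{1/2}}_{L^2(M,\tau)}$ is a vector functional for the (normal) left regular representation of $M$ on $L^2(M,\tau)$, whence normality is immediate. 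Second, be aware that the Pedersen--Takesaki Radon--Nikodym theorem you invoke for surjectivity is not a cheap input: for a faithful normal semifinite trace it is essentially equivalent to the surjectivity assertion itself, so your proof is really a reduction of the proposition to that known theorem rather than an independent derivation---which is perfectly legitimate, but should be acknowledged. The remaining steps (H\"older for the contraction bound, testing against $u^*$ from the polar decomposition $X=u\abs{X}$ for the reverse inequality, Jordan decomposition to reduce surjectivity to $M_{*+}$, and deducing the positivity equivalence from surjectivity plus injectivity) are all correct, with the traciality $\tau(u\abs{X}u^*)=\tau(\abs{X}u^*u)=\tau(\abs{X})$ valid for $\abs{X}\in L^1(M,\tau)$ and $u\in M$ as you use it.
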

	
	Now we define bilateral almost uniform convergence of a sequence of measurable operators. This type of convergence is one of the  key components of this article. Due to Egorov's theorem, this definition of convergence is actually refers to the almost everywhere convergence in classical measure space. In sections 4 and 5, we shall demonstrate this form of convergence of various sequences. 
	
	\begin{defn}\label{bilat}
		A sequence of operators $\lbrace X_n \rbrace_{n \in \N} \subseteq L^0(M, \tau)$ converges bilaterally almost uniformly (b.a.u) to $X \in L^0(M, \tau)$ if for every $\epsilon > 0$ there exists $e \in \CP(M)$ with $\tau(1-e) < \epsilon$ and
		\begin{align*}
			\lim_{n \to \infty} \norm{e (X_n - X) e} =0.
		\end{align*}
	\end{defn}
	
	We put down the following proposition for our future reference. The proof is simple, and hence we omit it.
	
	\begin{prop}\label{bau convergence closed under addition and mult}
		Suppose $\{X_n \}_{n \in \N}$ and $\{ Y_n \}_{n \in \N}$ are two sequences in $L^0(M, \tau)$ such that $\{X_n \}_{n \in \N}$ converges in measure (resp. b.a.u) to $X$ and $\{ Y_n \}_{n \in \N}$ converges in measure (resp. b.a.u) to $Y$. Then, for all  $c \in \C$,  $\{ c X_n + Y_n \}_{n \in \N}$ converges in measure (resp. b.a.u) to $cX + Y$.
	\end{prop}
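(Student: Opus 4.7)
The claim is a routine stability result; the essence of the argument, in both the b.a.u.\ and the in-measure case, is to intersect the two projections afforded by the two hypotheses. The plan is to handle the b.a.u.\ case explicitly and to remark that the in-measure case is entirely analogous.

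Given $\epsilon > 0$, I would apply the b.a.u.\ convergence of $\{X_n\}$ with tolerance $\epsilon/2$ to obtain a projection $e_1 \in \CP(M)$ with $\tau(1-e_1) < \epsilon/2$ and $\norm{e_1(X_n - X)e_1} \to 0$; similarly I would obtain $e_2 \in \CP(M)$ with $\tau(1-e_2) < \epsilon/2$ and $\norm{e_2(Y_n - Y)e_2} \to 0$. Setting $e = e_1 \wedge e_2$, the identity $1 - e = (1-e_1) \vee (1-e_2)$ combined with subadditivity of $\tau$ on projections gives $\tau(1-e) < \epsilon$. Since $e \leq e_i$ for $i = 1, 2$, one has $e = e e_i = e_i e$, so the boundedness of $e_i(X_n - X)e_i$ forces $e(X_n - X)e = e\bigl(e_1(X_n - X)e_1\bigr)e$ to be bounded with $\norm{e(X_n - X)e} \leq \norm{e_1(X_n - X)e_1}$, and likewise for $Y_n - Y$. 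The triangle inequality then yields
\begin{align*}
\norm{e(cX_n + Y_n - cX - Y)e} \leq |c|\,\norm{e(X_n - X)e} + \norm{e(Y_n - Y)e} \to 0,
\end{align*}
which is the desired b.a.u.\ convergence.

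For the in-measure case, the same intersection trick applies: given $\epsilon, \delta > 0$, choose $N_1, N_2$ and projections $e_1, e_2$ (depending on $n$) coming from convergence in measure of $X_n$ and $Y_n$ with tolerances $(\epsilon/(|c|+1), \delta/2)$ and $(\epsilon, \delta/2)$, respectively, and set $e = e_1 \wedge e_2$; the controls on $\norm{(X_n - X)e}$ and $\norm{(Y_n - Y)e}$ combine by the triangle inequality. There is no genuine obstacle here; the only minor point worth verifying is that compression by a smaller projection can only decrease the operator norm, which makes the final subadditive estimate immediate.
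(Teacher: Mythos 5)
Your argument is correct, and it is exactly the routine proof the paper has in mind --- indeed the paper explicitly omits the proof of this proposition as simple, and your intersection-of-projections device, with $1-(e_1\wedge e_2)=(1-e_1)\vee(1-e_2)$ and the Kaplansky-type subadditivity $\tau(p\vee q)\le\tau(p)+\tau(q)$, followed by $\norm{e(X_n-X)e}\le\norm{e_1(X_n-X)e_1}$ and the triangle inequality, is the standard route. One cosmetic slip: in the in-measure case your stated tolerances give $\abs{c}\,\epsilon/(\abs{c}+1)+\epsilon<2\epsilon$ rather than $\epsilon$, which is harmless since $\epsilon$ is arbitrary (or simply take both tolerances equal to $\epsilon/(\abs{c}+1)$).
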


	Throughout this article, unless otherwise mentioned, $G$ will denote a semigroup and $m$ be a $\sigma$-finite measure on $G$, which is both left and right invariant (i.e. $m(uB)= m(B)$ and $m(Bu)= m(B)$ for all $u \in G$).  In the sequel $\K$ will always denote either $\Z_+$ or $\R_+$. We consider a collection $\{I_l\}_{l \in \K}$ of measurable subsets of $G$ having the following properties. 
	\begin{enumerate}
		\item[(P1)] $0< m(I_l) < \infty$ for all $l \in \K$.
		\item[(P2)] $\lim_{l \to \infty} \frac{m(I_l \Delta I_l u)}{m(I_l)}=0$ and $\lim_{l \to \infty} \frac{m(I_l \Delta uI_l)}{m(I_l)}=0$ for all $u \in G$.
	\end{enumerate}
	
	Recall that a real Banach space $E$ paired with a closed convex subset $K$ satisfying $K \cap -K = \{0\}$ and $\lambda K \subseteq K$ for $\lambda \geq 0$ induces a partial order on $E$ given by $x \leq y$ if and only if $y-x \in K$ where $x,y \in E$. With such a partial order, $E$ will be referred to as an ordered Banach space. In our context, it is easy to verify that $M, M^*, M_*$, $L^p(M, \tau)$ for $1 \leq p \leq \infty$ become ordered  Banach spaces with respect to the natural order. Let us now define an action of $G$ on ordered Banach spaces.
	
	\begin{defn}\label{action on Banach sp}
		Let $E$ be an ordered Banach space. A map $\Lambda$ defined by 
		\begin{align*}
			G \ni g \xrightarrow[]{\Lambda} \Lambda_g \in \CB(E)
		\end{align*}
		is called an action if $\Lambda_g \circ \Lambda_h= \Lambda_{gh}$ for all $g,h \in G$. It is called anti-action if $\Lambda_g \circ \Lambda_h= \Lambda_{hg}$ for all $g,h \in G$. In this article, we consider both actions and anti-actions $\Lambda= \{\Lambda_g\}_{g \in G}$ which satisfy the following conditions.
		\begin{enumerate}
			\item[(C)] For all $x \in E$, the map $g \rightarrow \Lambda_g(x)$ from $G$ to $E$ is continuous. Here we take $w^*$-topology when $E=M$ and norm topology otherwise.
			\item[(UB)] $\sup_{g \in G} \norm{\Lambda_g} < \infty$.
			\item[(P)] For all $g \in G$ and $x \in E$ with $x \geq 0$, $\Lambda_g(x) \geq 0$.
		\end{enumerate}
		We refer the triple $(E, G, \Lambda)$  as a non-commutative dynamical system.  
	\end{defn}
	
	Let  us consider a non-commutative dynamical system $ (L^1(M, \tau), G, \alpha)$. Note that for each $g \in G$, $\alpha_g: L^1(M, \tau) \rightarrow L^1(M, \tau)$ is a bounded operator. 
	We recall  the dual of $\alpha_g$, denoted by $\alpha_g^*$, defined on $M= {L^1(M, \tau)}^*$ and it is determined by  the following equation;
	\begin{align}\label{dual action}
		\tau( \alpha^*_g(x)Y) = \tau( x\alpha_g(Y))  \text{ for all } x \in M  \text{ and }  Y \in L^1(M, \tau).
	\end{align}
	On the other hand if $ \beta$ is an action of $G$ on $M$, then for all $g \in G$, the predual transformation of $\beta_g$, denoted by $\hat{\beta_g}$, defined on $L^1(M, \tau)$ and it  is determined by the following equation;
	\begin{align}\label{predual action}
		\tau( \beta_g(x)Y) = \tau( x\hat{\beta}_g(Y))  \text{ for all } x \in M  \text{ and }  Y \in L^1(M, \tau).
	\end{align}
	Further, we have  $ {\hat{(\beta_g)}}^* = \beta_g$. This identification will be used in the sequel.\\

	\section{Mean convergence for semigroup action}

	The following known result provides a norm dense subset of $N_{* +}$ associated with a f.n state  $\varphi$ on $N$ and reader may find a proof in \cite[lemma 3.19]{Hiai2020}.
	
	\begin{thm}\label{dense subset of normal functionals}
		Let $N$ be a von Neumann algebra with a f.n state $\varphi$. Then $\{ \psi \in N_{* +} : \psi \leq k \varphi \text{ for some } k>0 \} $ is a norm dense subset of $N_{* +}$.
	\end{thm}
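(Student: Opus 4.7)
The plan is to reduce to a vector-representation problem in a GNS Hilbert space. Form the GNS triple $(\CH_\varphi, \pi_\varphi, \xi_\varphi)$ associated to the faithful normal state $\varphi$. Faithfulness and normality force $\pi_\varphi$ to be a faithful normal $*$-representation and $\xi_\varphi$ to be a cyclic and separating vector for $\pi_\varphi(N)$; in particular $\xi_\varphi$ is cyclic for the commutant $\pi_\varphi(N)'$, so the subspace $\{a'\xi_\varphi : a'\in \pi_\varphi(N)'\}$ is norm-dense in $\CH_\varphi$. From here on I identify $N$ with $\pi_\varphi(N)$. Given any $\psi\in N_{*+}$, invoke the standard representation theorem for positive normal functionals on a von Neumann algebra in standard form to write $\psi(x)=\langle x\eta,\eta\rangle$ for some $\eta\in \CH_\varphi$.

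Next, pick $a'_n\in N'$ with $\eta_n:=a'_n\xi_\varphi \to \eta$ in norm, and set $\psi_n(x):=\langle x\eta_n,\eta_n\rangle$. Each $\psi_n$ is manifestly positive and normal. For $x\in N_+$ write $x=(x^{1/2})^2$ and use $a'_n\in N'$ to commute $a'_n$ past $x^{1/2}\in N$:
\begin{align*}
\psi_n(x)=\|x^{1/2}\eta_n\|^2=\|x^{1/2}a'_n\xi_\varphi\|^2=\|a'_n x^{1/2}\xi_\varphi\|^2\leq \|a'_n\|^2\,\|x^{1/2}\xi_\varphi\|^2=\|a'_n\|^2\,\varphi(x),
\end{align*}
so $\psi_n\leq \|a'_n\|^2\varphi$ and $\psi_n$ belongs to the set in question. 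Norm convergence $\psi_n\to\psi$ in $N_*$ follows from the elementary estimate $\|\omega_{\eta_n,\eta_n}-\omega_{\eta,\eta}\|\leq \|\eta_n-\eta\|(\|\eta_n\|+\|\eta\|)$, obtained by adding and subtracting $\langle x\eta_n,\eta\rangle$ inside the bound.

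The main obstacle is the vector-representation step, namely that every $\psi\in N_{*+}$ can be realized as $\omega_{\eta,\eta}$ for a \emph{single} vector $\eta\in\CH_\varphi$. This is not elementary: it rests on the theory of von Neumann algebras in standard form and, equivalently, on the fact that the natural positive cone $\CP^{\natural}\subseteq\CH_\varphi$ surjects onto $N_{*+}$ via $\eta\mapsto \omega_{\eta,\eta}$. Once this is granted, the density assertion follows from the three routine ingredients above: cyclicity of $\xi_\varphi$ for $N'$, the $\|a'\|^2\varphi$-domination of vector states coming from $a'\xi_\varphi$, and norm continuity of $\eta\mapsto \omega_{\eta,\eta}$.
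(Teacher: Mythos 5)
Your proof is correct: the only nontrivial input, the vector-representation step, is exactly the standard-form theorem (every positive normal functional on a von Neumann algebra with a cyclic and separating vector is of the form $\omega_\eta$ for a single vector $\eta$, which one may take in the natural cone of $\CH_\varphi$), and the remaining ingredients --- cyclicity of $\xi_\varphi$ for $\pi_\varphi(N)'$, the domination $\omega_{a'\xi_\varphi} \leq \norm{a'}^2 \varphi$ for $a' \in N'$, and the norm estimate $\norm{\omega_{\eta_n}-\omega_{\eta}} \leq \norm{\eta_n - \eta}\,(\norm{\eta_n}+\norm{\eta})$ --- are all sound. The paper offers no proof of its own, deferring entirely to \cite[Lemma 3.19]{Hiai2020}, and your argument is essentially the same standard-form argument given in that reference.
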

	
	Following is the Radon Nykodym type result, which will be used in the sequel  and a proof can be found in \cite{stratila79}.  
	
	\begin{lem}\label{au conv on dense set-0}
		Let $M \subseteq \CB(\CH)$ be a von Neumann algebra and $\omega$ be a positive linear functional on $M$. Suppose there exists a    $C> 0$ and  $\xi \in \CH$  such that $\omega(x) \leq C \inner{x \xi}{\xi}_{\CH}$ for all $x \in M_+$, then there exists a $x' \in M'_+$ such that $x' \leq C$ and $\omega(x)= \inner{x' x\xi}{\xi}_{\CH}$ for all $x \in M$. Moreover, if $\xi$ is a cyclic vector for $M$ in $\CH$, then $x' $ is unique.
	\end{lem}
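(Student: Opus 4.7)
The plan is to construct the Radon--Nikodym derivative $x'$ by building a bounded sesquilinear form on the cyclic subspace generated by $\xi$ and appealing to the Riesz representation theorem. First, set $K := \overline{M\xi} \subseteq \CH$ and define, for $x, y \in M$,
\[
B(x\xi, y\xi) := \omega(y^*x).
\]
The first task is to show $B$ is well defined on $M\xi \times M\xi$. If $x\xi = x_1\xi$, then using the hypothesis and the Cauchy--Schwarz inequality for the positive functional $\omega$,
\[
\abs{\omega(y^*(x-x_1))}^2 \le \omega(y^*y)\,\omega((x-x_1)^*(x-x_1)) \le C\,\omega(y^*y)\,\norm{(x-x_1)\xi}^2 = 0,
\]
so $\omega(y^*x) = \omega(y^*x_1)$; an analogous argument handles the second slot.

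Next, the same Cauchy--Schwarz plus domination estimate gives
\[
\abs{B(x\xi, y\xi)} \le \omega(x^*x)^{1/2}\,\omega(y^*y)^{1/2} \le C\,\norm{x\xi}\,\norm{y\xi},
\]
so $B$ extends to a bounded sesquilinear form on $K \times K$ of norm $\le C$. By Riesz there is a bounded operator $T_0 \in \CB(K)$ with $\norm{T_0} \le C$ and $B(\eta, \zeta) = \inner{T_0\eta}{\zeta}_{\CH}$. Positivity $B(x\xi, x\xi) = \omega(x^*x) \ge 0$ forces $T_0 \ge 0$. Now extend by $0$ on $K^\perp$ to obtain $x' \in \CB(\CH)_+$ with $x' \le C$. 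Since $M$ is a $*$-algebra, $K$ and $K^\perp$ are both $M$-invariant, so for $a, x, y \in M$,
\[
\inner{x'a(x\xi)}{y\xi} = B(ax\xi, y\xi) = \omega(y^*ax) = B(x\xi, a^*y\xi) = \inner{ax'(x\xi)}{y\xi},
\]
which shows $x'$ commutes with $a$ on $K$, and trivially on $K^\perp$; hence $x' \in M'$. Setting $y = 1$ gives $\omega(x) = B(x\xi, \xi) = \inner{x' x \xi}{\xi}$, as required.

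For uniqueness when $\xi$ is cyclic, suppose $x'_1, x'_2 \in M'$ both represent $\omega$ and set $D := x'_1 - x'_2 \in M'$. Then for all $x, y \in M$,
\[
\inner{Dx\xi}{y\xi} = \inner{y^* D x \xi}{\xi} = \inner{D y^* x \xi}{\xi} = \omega(y^*x) - \omega(y^*x) = 0,
\]
and cyclicity of $\xi$ (which makes $M\xi$ dense in $\CH$) forces $D = 0$. The main technical hurdle is the well-definedness of $B$, which is precisely where the domination hypothesis $\omega \le C\inner{\cdot\,\xi}{\xi}$ is essential; once that is in hand the remainder is routine Hilbert-space machinery together with the standard observation that the commutant of a $*$-subalgebra preserves both $K$ and $K^\perp$.
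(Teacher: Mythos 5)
Your proof is correct: the well-definedness and boundedness of the form via Cauchy--Schwarz plus the domination $\omega \le C\inner{\cdot\,\xi}{\xi}$, the Riesz representation on $K=\overline{M\xi}$, the extension by zero on $K^\perp$ (using $M$-invariance of both subspaces) to land in $M'_+$ with $x'\le C$, and the cyclicity argument for uniqueness are all sound. The paper itself gives no proof, deferring to Str\u{a}til\u{a}--Zsid\'{o} \cite{stratila79}, and your sesquilinear-form argument is essentially the standard proof of this commutant Radon--Nikodym lemma found there, so there is nothing to flag.
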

	
	\begin{lem}\label{au conv on dense set-1}
		Let $(M, G,  T)$ be a non-commutative dynamical system and $\rho$ be a $G$-invariant f.n state on $M$. 
		Then there exists a non-commutative dynamical system $(M', G,  T')$
		such that 
		\begin{align*}
			\langle  T_g'(y')x \Omega_\rho, \Omega_\rho \rangle_{\rho}  = \langle y'  T_{g}(x) \Omega_\rho, \Omega_\rho \rangle_{\rho} \text{ for all }   x \in M, y' \in M', g \in G .
		\end{align*}
	\end{lem}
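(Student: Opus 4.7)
The plan is to use the GNS representation of $(M,\rho)$ and to construct $T'_g$ on $M'$ one $g$ at a time via the Radon--Nikodym type result in Lemma~\ref{au conv on dense set-0}. I would work in $(\CH_\rho, \pi_\rho, \Omega_\rho)$, identifying $M$ with its image; since $\rho$ is a faithful normal state, $\Omega_\rho$ is cyclic and separating for $M$, and $\rho(x) = \langle x\Omega_\rho,\Omega_\rho\rangle_\rho$.

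For each $g \in G$ and $y' \in M'_+$, define the linear functional $\omega_{y',g}$ on $M$ by $\omega_{y',g}(x) := \langle y' T_g(x)\Omega_\rho, \Omega_\rho\rangle_\rho$. For $x \in M_+$, the elements $y'$ and $T_g(x)$ commute (as $y' \in M'$ and $T_g(x) \in M$), so
\[
\omega_{y',g}(x) = \langle T_g(x)(y')^{1/2}\Omega_\rho,\; (y')^{1/2}\Omega_\rho\rangle_\rho \geq 0.
\]
Using $(y')^{1/2}(y')^{1/2} \leq \|y'\|\cdot 1$ together with the $G$-invariance of $\rho$,
\[
\omega_{y',g}(x) \leq \|y'\|\langle T_g(x)\Omega_\rho,\Omega_\rho\rangle_\rho = \|y'\|\rho(x) = \|y'\|\langle x\Omega_\rho,\Omega_\rho\rangle_\rho.
\]
Lemma~\ref{au conv on dense set-0} (applied with $C=\|y'\|$ and $\xi=\Omega_\rho$) then furnishes a unique element $T'_g(y') \in M'$ with $0 \leq T'_g(y') \leq \|y'\|$ and $\langle T'_g(y') x\Omega_\rho,\Omega_\rho\rangle_\rho = \langle y' T_g(x)\Omega_\rho,\Omega_\rho\rangle_\rho$ for all $x \in M$. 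Extending linearly via the Jordan decomposition produces $T'_g \in \CB(M')$, positivity preserving, with $\|T'_g\|$ uniformly bounded in $g$, confirming conditions (UB) and (P).

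For the semigroup relation, given $y' \in M'_+$ and $x \in M$, using $T_g T_h = T_{gh}$,
\[
\langle T'_g(T'_h(y'))x\Omega_\rho,\Omega_\rho\rangle_\rho = \langle T'_h(y') T_g(x)\Omega_\rho,\Omega_\rho\rangle_\rho = \langle y' T_{hg}(x)\Omega_\rho,\Omega_\rho\rangle_\rho = \langle T'_{hg}(y')x\Omega_\rho,\Omega_\rho\rangle_\rho.
\]
Cyclicity of $\Omega_\rho$ for $M$ and the uniqueness clause of Lemma~\ref{au conv on dense set-0} then force $T'_g \circ T'_h = T'_{hg}$, so $T'$ is an anti-action when $T$ is an action and vice versa---in either case a non-commutative dynamical system on $M'$. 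For the continuity condition (C), the matrix coefficient
\[
\langle T'_g(y') x_1\Omega_\rho, x_2\Omega_\rho\rangle_\rho = \langle y' T_g(x_2^*x_1)\Omega_\rho,\Omega_\rho\rangle_\rho
\]
(valid since $T'_g(y') \in M'$ commutes with $x_2^*$) is continuous in $g$ because $g \mapsto T_g(x_2^*x_1)$ is $w^*$-continuous and the vector functional $z \mapsto \langle z\Omega_\rho,(y')^*\Omega_\rho\rangle_\rho$ belongs to $M_*$; density of $M\Omega_\rho$ in $\CH_\rho$ together with the uniform bound on $\|T'_g\|$ extends continuity to all matrix coefficients.

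The main technical step is the estimate $\omega_{y',g}(x) \leq \|y'\|\rho(x)$ that makes Lemma~\ref{au conv on dense set-0} applicable; it relies decisively on both $T_g$-invariance of $\rho$ and on $y' \in M'$ commuting with $T_g(x) \in M$. After this, everything reduces to uniqueness via cyclicity of $\Omega_\rho$ and standard continuity arguments.
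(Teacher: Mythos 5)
Your proposal is correct and takes essentially the same route as the paper: you bound the positive functional $x \mapsto \langle y'\,T_g(x)\Omega_\rho,\Omega_\rho\rangle_\rho$ by $\norm{y'}\rho(x)$ using $G$-invariance of $\rho$, then invoke the Radon--Nikodym-type Lemma \ref{au conv on dense set-0} with $\xi=\Omega_\rho$ to define $T_g'(y')$, exactly as the paper does. You are in fact more thorough than the paper's proof, which stops after the construction; your verification of positivity of the functional, of (P), (UB), (C), and of the order reversal $T_g'\circ T_h'=T_{hg}'$ (an action becoming an anti-action, which the paper's definition of a non-commutative dynamical system explicitly accommodates) fills in details the paper leaves implicit.
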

	
	\begin{proof}
		For $y' \in M_+'$ and $g \in G$, consider the linear functional $\nu_{y'}^g : M \rightarrow \C $  defined by
		\begin{align*}
			\nu_{y'}^g(x) = \langle y'  T_{g}(x) \Omega_\rho, \Omega_\rho \rangle_{\rho}.
		\end{align*}
		For $x \in M_+$, note that
		\begin{align*}
			\nu_{y'}^g(x) &= \langle y'  T_{g}(x) \Omega_\rho, \Omega_\rho \rangle_{\rho}\\
			& \leq \norm{y'}_{\infty} \langle   T_{g}(x) \Omega_\rho, \Omega_\rho \rangle_{\rho}\\
			&= \norm{y'}_{\infty} \rho( T_{g}(x)) \\
			&= \norm{y'}_{\infty} \rho(x) \\
			& = \norm{y'}_{\infty} \langle  x \Omega_\rho, \Omega_\rho \rangle_{\rho}.
		\end{align*}
		Hence, by Lemma \ref{au conv on dense set-0} there exists  a unique  $z' \in M'$ such that $\nu_{y'}^g(x) = \langle  z'x \Omega_\rho, \Omega_\rho \rangle_{\rho}$. Write $ z' =  T_g'(y')$ and then  we have $   \langle y'  T_{g}(x) \Omega_\rho, \Omega_\rho \rangle_{\rho} = \langle  T_g'(y') x \Omega_\rho, \Omega_\rho \rangle_{\rho}$. 
	\end{proof}
	
	The following is the mean ergodic type theorem for the action of  semigroup. This will be useful for  the subsequent results.  
	\begin{thm}\label{mean ergodic thm}
		Let $(M, G,  T)$ be a non-commutative dynamical system.
		Suppose there exists a  f.n state $\rho$  satisfying  $\rho( T_g(x)^2 )\leq \rho(x^2)$ for all $x \in M_s$ and $g \in G$. Then for all $\mu \in M_*$, there exists a $\bar{\mu} \in M_*$ such that  
		\begin{align*}
			\bar{\mu}= \norm{\cdot}_1- \lim_{l \to \infty} B_l(\mu),
		\end{align*}
		where for $l \in \K$, $B_l(\mu):= \frac{1}{m(I_l)} \int_{I_l} \beta_g(\mu) dm(g)$ and $\beta_g(\mu)= \mu \circ  T_g $ for all $g \in G$ and $\mu \in M_*$.
	\end{thm}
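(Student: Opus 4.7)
The strategy is to transfer the problem to the GNS Hilbert space $\CH_\rho$ attached to $(M,\rho)$, invoke a classical Hilbert-space mean ergodic theorem there, and then translate the convergence back to $M_*$. First I decompose a general $\mu\in M_*$ into its Jordan components and reduce to $\mu\in M_{*+}$; since $\sup_g\norm{\beta_g}=\sup_g\norm{T_g}<\infty$ by (UB), the operators $B_l$ are uniformly bounded on $M_*$, and a standard $\epsilon/3$-argument lets me work on a norm-dense subset of $M_{*+}$. Theorem~\ref{dense subset of normal functionals} supplies such a subset: the $\mu\in M_{*+}$ with $\mu\leq k\rho$ for some $k>0$.

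For such a $\mu$, Lemma~\ref{au conv on dense set-0} applied with $\xi=\Omega_\rho$ produces a unique $y'\in M'_+$ with $y'\leq k$ and $\mu(x)=\inner{y'x\Omega_\rho}{\Omega_\rho}_\rho$ for every $x\in M$. The contractivity hypothesis $\rho(T_g(x)^2)\leq\rho(x^2)$ for $x\in M_s$, combined with writing a general $x\in M$ as $x=a+ib$ with $a,b\in M_s$, lets me extend the assignment $V_g(x\Omega_\rho):=T_g(x)\Omega_\rho$ to a uniformly bounded family of operators on $\CH_\rho$ (which is in fact a real contraction on $\overline{M_s\Omega_\rho}$). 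Using that $y'$ commutes with $M$, a direct computation rewrites the averages as
\begin{equation*}
B_l(\mu)(x)=\inner{x\Omega_\rho}{W_l^*(y'\Omega_\rho)}_\rho,\qquad W_l^*:=\frac{1}{m(I_l)}\int_{I_l} V_g^*\,dm(g).
\end{equation*}

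Since $G$ is amenable (being $\Z_+^d$ or $\R_+^d$) and $\{I_l\}$ satisfies the F{\o}lner condition (P2), a mean ergodic theorem for uniformly bounded representations of amenable semigroups on the reflexive space $\CH_\rho$ produces a vector $\eta\in\CH_\rho$ with $W_l^*(y'\Omega_\rho)\to\eta$ in norm. Setting $\bar\mu(x):=\inner{x\Omega_\rho}{\eta}_\rho$ defines an element of $M_*$, and the elementary Cauchy--Schwarz bound
\begin{equation*}
\norm{B_l(\mu)-\bar\mu}_{M_*}=\sup_{\norm{x}\leq 1}\abs{\inner{x\Omega_\rho}{W_l^*(y'\Omega_\rho)-\eta}_\rho}\leq \norm{W_l^*(y'\Omega_\rho)-\eta}_{\CH_\rho}
\end{equation*}
converts Hilbert-space convergence into norm convergence of $B_l(\mu)$ in $M_*$. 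The density extension (using the uniform bound on $\norm{B_l}$) then concludes the proof.

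The main technical hurdle is the clean verification that the $V_g$ extend to bounded operators on the complex $\CH_\rho$: the hypothesis only directly controls self-adjoint elements, so one must either work on the real Hilbert subspace $\overline{M_s\Omega_\rho}$ and then complexify, or invoke a Kadison--Schwarz-type inequality. A secondary subtlety is identifying the precise Hilbert-space mean ergodic theorem applicable to the multi-parameter F{\o}lner averages $\{I_l\}$; all remaining steps (strong measurability of $g\mapsto V_g^*(y'\Omega_\rho)$ from (C), the interchange of integral and inner product, and the density extension) are standard consequences of (C), (UB), and (P2).
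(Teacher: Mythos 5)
Your overall strategy coincides with the paper's: pass to the GNS space, encode the dynamics in operators $x\Omega_\rho\mapsto T_g(x)\Omega_\rho$, average their adjoints over the F{\o}lner sets $I_l$, apply a Hilbert-space mean ergodic theorem, and transfer back to $M_*$ via a Cauchy--Schwarz bound together with density and the uniform bound on $\norm{B_l}$. The only real divergence is the choice of dense family: the paper works with the vector functionals $\psi_{y_1,y_2}(x)=\inner{xy_1\Omega_\rho}{y_2\Omega_\rho}$ for $y_1,y_2\in M'(\sigma)$, using Tomita--Takesaki theory to write $y_1^*y_2\Omega_\rho=z\Omega_\rho$ with $z=J\sigma_{i/2}(y_2^*y_1)J\in M$, whereas you use Theorem \ref{dense subset of normal functionals} together with the Radon--Nikodym Lemma \ref{au conv on dense set-0} to represent $\mu\leq k\rho$ by some $y'\in M'_+$ (the device the paper itself deploys later, in Lemma \ref{au conv on dense set 2}). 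That substitution is perfectly reasonable in principle, and arguably more self-contained.

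However, there is a concrete gap at the point where your two choices interact. First, your claim that the $V_g$ extend to a uniformly bounded family on all of $\CH_\rho$ is false in general when $\rho$ is not a trace: the hypothesis controls $\norm{T_g(x)\Omega_\rho}$ only for $x\in M_s$, and for $x=x_1+ix_2$ one has $\norm{x_1\Omega_\rho}\leq\frac{1}{2}\left(\norm{x\Omega_\rho}+\norm{x^*\Omega_\rho}\right)$, where $\norm{x^*\Omega_\rho}$ is not dominated by any multiple of $\norm{x\Omega_\rho}$ (the involution is implemented by the unbounded operator $S=J\Delta^{1/2}$). You partially flag this and propose retreating to the real subspace $H_s:=\overline{M_s\Omega_\rho}$, which is indeed the paper's (implicit) fix. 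But then your key identity $B_l(\mu)(x)=\inner{x\Omega_\rho}{W_l^*(y'\Omega_\rho)}$ acquires a domain problem you do not flag: the operators $V_g^*$ entering $W_l^*$ then live only on $H_s$, while $y'\Omega_\rho$ for $y'\in M'_+$ need not lie in $H_s$ --- it lies in the fixed-point set of $F=S^*$, i.e.\ in the real closure of $M'_s\Omega_\rho$, which differs from $\overline{M_s\Omega_\rho}$ in the non-tracial case. This is precisely the obstruction the paper's analytic-element trick is engineered to avoid, since there the relevant vector is $z\Omega_\rho\in M\Omega_\rho$, which splits into $z_1\Omega_\rho,z_2\Omega_\rho\in M_s\Omega_\rho$. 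Your argument is repairable within your own framework: for $x\in M_s$ the functional $\xi\mapsto\inner{y'\xi}{\Omega_\rho}$ is bounded, real-linear and real-valued on $H_s$, hence has a Riesz representative $\zeta\in H_s$ with respect to $\re\inner{\cdot}{\cdot}$; running your computation with $\zeta$ in place of $y'\Omega_\rho$, applying the (real) mean ergodic theorem to the F{\o}lner averages, and extending complex-linearly using $\norm{x_j\Omega_\rho}\leq\norm{x_j}\leq\norm{x}$ recovers $\bar{\mu}\in M_*$ and the $M_*$-norm convergence. With that substitution your proof goes through; as written, the displayed formula for $B_l(\mu)$ is not justified.
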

	
	\begin{proof}
		Let $L^2(M_s, \rho)$ be the closure of $M_s$ with respect to the norm induced from the inner product $ \langle \cdot, \cdot \rangle_\rho $.   
		Then we can define the following contractions on the Hilbert space $L^2(M_s, \rho)$.
		\begin{align*}
			u_g(x \Omega_{\rho})=  T_g(x) \Omega_{\rho}, x \in M_s, g \in G.
		\end{align*}
		For $l \in \K$, consider  $T_l:= \frac{1}{m(I_l)} \int_{I_l} u_g^*  \  dm(g) $. 	Then by von Neumann mean ergodic theorem,  it follows that for all $\xi \in L^2(M_s, \rho)$, $T_l(\xi)$  converges to $P \xi$ strongly, where $P$ is the orthogonal projection of $L^2(M_s, \rho)$ onto the subspace $\{ \xi \in L^2(M_s, \rho): u_g^* \xi= \xi \text{ for all } g \in G \}$.
		
		\noindent Now let $ x \in M$. Further write $x$ as $ x_1+ ix_2 $, where $x_1 , x_2\in M_s$ and 
		then by the previous argument, it follows that $ T_l(x\Omega):= T_l(x_1\Omega_{\rho}) + i T_l(x_2\Omega_{\rho})$ converges in $L^2(M, \rho)$.
		
		\noindent Let $y_1, y_2 \in M'(\sigma)$ and define $\psi_{y_1, y_2}(x)= \inner{x y_1 \Omega_{\rho}}{ y_2 \Omega_{\rho}}$ for all $x \in M$. Then there exists a  $z \in M$ such that $y_1^*y_2 \Omega_{\rho}= z \Omega_{\rho}$, where $z= J \sigma_{i/2}(y_2^*y_1)J$. Consequently for $x \in M$,
		\begin{align*}
			B_l(\psi_{y_1, y_2})(x)
			& = \frac{1}{m(I_l)} \int_{I_l} \inner{ T_{g}(x) y_1 \Omega_{\rho}}{y_2 \Omega_{\rho}}_{\rho} dm(g) \\
			& = \frac{1}{m(I_l)} \int_{I_l} \inner{ T_{g}(x) \Omega_{\rho}}{y_1^* y_2 \Omega_{\rho}}_{\rho} dm(g) \\
			& = \frac{1}{m(I_l)} \int_{I_l} \inner{ T_{g}(x) \Omega_{\rho}}{z\Omega_{\rho}}_{\rho} dm(g)\\
			& = \frac{1}{m(I_l)} \int_{I_l} \inner{x \Omega_{\rho}}{ u_{g}^* (z\Omega_{\rho})}_{\rho} dm(g)\\
			& = \inner{x \Omega_{\rho}}{ T_l(z\Omega_{\rho})}_{\rho}.
		\end{align*}
		Hence, for all $x \in M$, $B_l(\psi_{y_1, y_2})(x) \rightarrow \inner{x \Omega_{\rho}}{\eta}_{\rho}$, where $ \eta =\lim_{l \to \infty} T_l(z\Omega_{\rho})$. Consider $\overline{\psi}_{y_1, y_2} \in M_*$,  defined by 
		\begin{align*}
			\overline{\psi}_{y_1, y_2}(x)= \inner{x \Omega_{\rho}}{\eta }_{\rho}, ~~x \in M.
		\end{align*}
		Then by  standard argument it follows that $\overline{\psi}_{y_1, y_2} \circ  T_g = \overline{\psi}_{y_1, y_2}$ for all $g \in G$ and 
		\begin{align*}
			\overline{\psi}_{y_1, y_2} = \norm{\cdot}_1- \lim_{l \to \infty} B_l(\psi_{y_1, y_2}).
		\end{align*}
		Hence the result follows since the set $\{ \psi_{y_1, y_2} : y_1, y_2 \in M'(\sigma) \}$ is total in $M_*$.
	\end{proof}

	\section{Maximal Inequality for the actions of $\Z_+^d$ and $\R_+^d$ }
	In this section we consider state preserving actions of  $\Z_+^d$ and $\R_+^d$ on a von Neumann algebra $M$. Then prove  a version of  maximal inequality for the induced action on the predual von Neumann algebra $M_*$. We start with the following useful lemma which is well known in the literature. For a proof we refer to \cite[Lemma 4.1]{Bik-Dip-neveu}.
	
	\begin{lem}\label{imp lem for maximal inequality}
		Let $M\subseteq \CB(\CH)$ be a von Neumann algebra and $c \in M$ such that $0 \leq c \leq 1$. Suppose $x \in M$ such that  $0 \leq x \leq s(c)$, then 
		$$x \in \overline{\{ y \in M : 0 \leq y \leq n c \text{ for some } n \in \N \}}^{w\text{-topology}}. $$ 
	\end{lem}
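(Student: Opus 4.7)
The plan is to use the spectral calculus of $c$ to write down an explicit approximating sequence. Since $0 \leq c \leq 1$, the support projection satisfies $s(c) = \chi_{(0,1]}(c)$, and for $n \in \N$ the spectral projections $e_n := \chi_{[1/n,1]}(c)$ lie in $M$, form an increasing sequence, and converge to $s(c)$ in the strong operator topology. The scalar inequality $\chi_{[1/n,1]}(t) \leq n t$ on $[0,1]$ delivers via functional calculus the crucial operator inequality $e_n \leq n c$.

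With this at hand, I would set $y_n := e_n x e_n$. From $0 \leq x \leq s(c) \leq 1$ we get $\|x\| \leq 1$, hence $0 \leq y_n \leq e_n \leq n c$, so each $y_n$ belongs to the set whose $w$-closure we want to hit.

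It remains to show $y_n \to x$ in the weak operator topology (indeed, strongly). The first step is the standard observation that $0 \leq x \leq s(c)$ forces $s(c) x s(c) = x$: from
\[
0 \leq (1-s(c))\, x\, (1-s(c)) \leq (1-s(c))\, s(c)\, (1-s(c)) = 0
\]
one reads off $x^{1/2}(1-s(c)) = 0$, so $x(1-s(c)) = 0 = (1-s(c)) x$. Writing
\[
e_n x e_n - s(c) x s(c) = e_n x (e_n - s(c)) + (e_n - s(c)) x s(c),
\]
the boundedness of $x$ and the strong convergence $e_n \to s(c)$ give $e_n x e_n \to s(c) x s(c) = x$ in the strong operator topology, and a fortiori in the weak (and ultraweak) topology. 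This places $x$ in the $w$-closure of the designated set.

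I do not anticipate a serious obstacle: the argument is a single application of the spectral theorem followed by a routine bounded SOT computation. The only point deserving care is the identity $x = s(c) x s(c)$, which captures the fact that a positive operator dominated by a projection is supported on its range; all the remaining manipulations are standard.
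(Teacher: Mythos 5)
Your proof is correct: the scalar estimate $\chi_{[1/n,1]}(t)\le nt$ gives $e_n:=\chi_{[1/n,1]}(c)\le nc$, the identity $x=s(c)\,x\,s(c)$ is properly justified from $0\le x\le s(c)$, and since the $y_n=e_nxe_n$ are uniformly bounded, their SOT convergence to $x$ indeed yields convergence in the $w$-topology. The paper gives no proof of its own but defers to \cite[Lemma 4.1]{Bik-Dip-neveu}, and your spectral-projection argument is exactly the standard one used there, so the two approaches coincide.
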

	
	For a semigroup action on a von Neumann algebra,  to prove  a  maximal inequality for the  induced  action of semigroups on the predual, we need an auxiliary  maximal inequality type theorem for the single induced map $T^*$ on $M_*$, where $T$ is a positive, sub-unital  and state preserving map on $M$.
	Such maximal inequality is already   established in  \cite{Bik-Dip-neveu},  following  similar ideas as  in \cite{Yeadon1977}, but in a  different context. We include the statement and for  
	the completeness of this article, we write  a proof. 
	
	\begin{thm}\label{maximal lemma}
		Let $M$ be a von Neumann algebra with a f.n state $\rho$  on $M$. Assume that $T: M \to M$ is a sub-unital, $\rho$-preserving and $w$-continuous positive linear map. Let $\mu \in M_{*s}$ and $\epsilon>0$. Then for any $N \in \N$ there exists  $e \in \CP(M)$ such that $\rho(1-e) < \norm{\mu}/\epsilon $ and
		\begin{align*}
			\abs{S_n(\mu)(x)} \leq \epsilon \rho(x) \text{ for all } x \in (eMe)_+ \text{ and } n \in \{1,\ldots, N \}.
		\end{align*}
	\end{thm}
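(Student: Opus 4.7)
The plan is to adapt Yeadon's construction from \cite{Yeadon1977} to our state-preserving predual setting. First, by the Jordan decomposition write $\mu = \mu_+ - \mu_-$ with $\mu_\pm \in M_{*+}$ and $\|\mu_+\| + \|\mu_-\| = \|\mu\|$; since $T$ is positive, each $S_n(\mu_\pm)$ is a positive functional, so the two-sided bound $|S_n(\mu)(x)| \leq \epsilon \rho(x)$ on $(eMe)_+$ follows from the one-sided bounds $S_n(\mu_\pm)(x) \leq \epsilon \rho(x)$. Finding projections $e_\pm$ separately for $\mu_\pm$ with $\rho(1-e_\pm) < \|\mu_\pm\|/\epsilon$ and taking $e := e_+ \wedge e_-$ then yields the general statement with $\rho(1-e) < \|\mu\|/\epsilon$.

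With $\mu \geq 0$ now assumed, I construct a decreasing sequence $1 = e_0 \geq e_1 \geq \cdots \geq e_N$ inductively. Having $e_{n-1}$, view $\nu_n := S_n(\mu) - \epsilon \rho$ as a self-adjoint normal functional on the reduced algebra $e_{n-1} M e_{n-1}$; its Hahn--Jordan decomposition produces a projection $p_n \leq e_{n-1}$ (the support of $\nu_n^+$) satisfying $\nu_n(p_n y p_n) \geq 0$ for $y \in M_+$ and $\nu_n((e_{n-1} - p_n) y (e_{n-1} - p_n)) \leq 0$. Setting $e_n := e_{n-1} - p_n$ and iterating to $e := e_N$, the inequality $\nu_k(e_N y e_N) = \nu_k(e_k (e_N y e_N) e_k) \leq 0$ propagates via the nesting $e_N \leq e_k$ for every $k \leq N$ and $y \in M_+$, which is exactly the desired $S_k(\mu)(exe) \leq \epsilon \rho(exe)$ on $(eMe)_+$.

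The main obstacle is the sharp estimate $\rho(1-e_N) = \sum_{n=1}^N \rho(p_n) < \|\mu\|/\epsilon$. The Hahn property applied at $y = e_{n-1}$ gives $\epsilon \rho(p_n) \leq S_n(\mu)(p_n)$, so it suffices to show $\sum_n S_n(\mu)(p_n) \leq \|\mu\|$. A direct use of $\|S_n(\mu)\| \leq \|\mu\|$ only delivers $N \|\mu\|$, while the naive Abel rewrite $\sum_n S_n(\mu)(p_n) = \sum_{k=0}^{N-1} \mu(T^k(B_k))$ with $B_k := \sum_{n>k} p_n/n$ costs a logarithmic factor (since $\|B_k\|_\infty \leq 1/(k+1)$ combined with sub-unitality $T^k(1) \leq 1$ only yields $\|\mu\| \sum_k 1/(k+1)$). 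Obtaining the sharp constant requires a refined variant in which the Hahn cutdown at level $n$ is performed against a \emph{ladder}-type functional that has already absorbed the mass removed at earlier stages, with Lemma \ref{imp lem for maximal inequality} consolidating the resulting support inequalities into a single operator bound; the $\rho$-invariance $\rho \circ T = \rho$ together with $T(1) \leq 1$ then collapses the sum to $\mu(1) = \|\mu\|$. Calibrating this telescoping in the absence of a trace --- with only a state and sub-unitality at hand --- is the delicate technical heart of the proof.
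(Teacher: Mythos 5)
Your reduction to positive $\mu$ via Jordan decomposition and the inductive Hahn construction are sound as far as they go: the nested projections do give $S_k(\mu)(x) \le \epsilon\rho(x)$ on $(e_N M e_N)_+$, and the Hahn property does give $\epsilon\rho(p_n) \le S_n(\mu)(p_n)$. But there is a genuine gap exactly where you flag it: the estimate $\sum_{n=1}^N S_n(\mu)(p_n) \le \norm{\mu}$ is not a calibration detail to be filled in later --- it \emph{is} the maximal inequality, and your construction contains no mechanism to produce it. Observe that nothing in your argument up to that point uses either dynamical hypothesis, $\rho \circ T = \rho$ or $T(1) \le 1$; a bound $\rho(1-e) < \norm{\mu}/\epsilon$ obtained without them would prove a maximal inequality for an \emph{arbitrary} sequence of positive norm-one maps $S_n$ on $M_*$, which is false. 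The ``ladder-type functional that has already absorbed the mass removed at earlier stages'' is precisely the missing idea, not a refinement of what you wrote, and the sequential cut-down order (Hahn at level $1$, then level $2$, and so on, each inside the previous corner) is structurally ill-suited to it: the defect functionals $\nu_n = S_n(\mu) - \epsilon\rho$ must be weighed against one another \emph{simultaneously}, with weights that make the ergodic sums telescope under the shift by $T$.

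For comparison, the paper avoids iterated Hahn decompositions altogether and runs a Yeadon-type variational argument. It maximizes the single $w$-continuous functional $\kappa(\underline{x},\underline{y}) = \sum_{n=1}^N n\bigl[S_n(\mu')(x_n) - \rho(x_n) - S_n(\mu')(y_n) - \rho(y_n)\bigr]$, with $\mu' = \mu/\epsilon$, over the $w$-compact set $\mathcal{L} = \{x_i, y_i \ge 0,\ \sum_i (x_i+y_i) \le 1\}$. First-order perturbations of a maximizer by $0 \le c' \le c := 1 - \sum_i(x_i+y_i)$ yield the two-sided bound $\abs{S_m(\mu)(c')} \le \epsilon\rho(c')$ directly (no splitting into $\mu_\pm$ is needed, since the $x_n$ and $y_n$ slots encode the two signs), and Lemma \ref{imp lem for maximal inequality} upgrades this to the support projection $e = s(c)$. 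The measure estimate then comes from comparing the maximizer with the shifted tuple $(Tx_2,\ldots,Tx_N,0,Ty_2,\ldots,Ty_N,0)$, which lies in $\mathcal{L}$ precisely because $T(1) \le 1$; since $nS_n(\mu') = \sum_{k=0}^{n-1}\mu' \circ T^k$, the weights $n$ in $\kappa$ make everything telescope, and $\rho \circ T = \rho$ collapses the state terms, yielding $\rho(e) \ge 1 - \norm{\mu}/\epsilon$ in one stroke. This shift comparison at a global maximizer is the step your proposal leaves open; building it into a Hahn-style scheme would in effect reconstruct the variational proof.
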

	
	\begin{proof}
		Let $N \in \N$ and consider the $w$-compact subset $\mathcal{L}$ of the von Neumann algebra $R:=\bigoplus_{n=1}^{2N} M$ defined by
		\begin{align*}
			\mathcal{L}:= \Big\{ (\underbar{x}, \underbar{y}):= (x_1,\ldots, x_N,y_1,\ldots, y_N) \in R : x_i, y_i \geq 0 \  \text{ and } \sum_{i=1}^{N} (x_i + y_i) \leq 1 \Big\}.
		\end{align*}
		Now consider the $w$-continuous linear functional $\kappa$ on $R$ defined by
		\begin{align*}
			\kappa((\underbar{x}, \underbar{y})):= \sum_{n=1}^{N} n \Big[ S_n(\mu')(x_n) - \rho(x_n) - S_n(\mu')(y_n) - \rho(y_n) \Big], \text{ for all } (\underbar{x}, \underbar{y}) \in R,
		\end{align*}
		
		where, $\mu':= \mu/ \epsilon$. Since $\mathcal{L}$ is $w$-compact,  the supremum  value of the function $\kappa$ on the set $\mathcal{L}$ will be attained. Let $(\underbar{x}, \underbar{y}) \in \mathcal{L}$ be such that $\kappa((\underbar{x}, \underbar{y})) \geq \kappa ((\underbar{a}, \underbar{b}))$ for all $(\underbar{a}, \underbar{b}) \in \mathcal{L}$.
		We define $c:= 1- \sum_{n=1}^{N} (x_i + y_i)$. Note that $c \geq 0$. Let $c'$ be an element in $M$ such that $0 \leq c' \leq c$. For $m \in \{ 1, \ldots ,N \}$, take $\underline{x}' = (x_1, \ldots, x_m+c', \ldots, x_N) $. We consider the  point $(\underbar{x}', \underbar{y}) \in \mathcal{L}$, i.e, 
		\begin{align*}
			(\underbar{x}', \underbar{y})= (x_1, \ldots, x_m+c', \ldots, x_N,  y_1, \ldots, y_N).
		\end{align*}
		Then we have,
		\begin{align*}
			\kappa ((\underbar{x}, \underbar{y})) \geq \kappa ((\underbar{x}', \underbar{y})).
		\end{align*}
		Therefore, we deduce the following
		\begin{align}\label{a}
			\nonumber&  \kappa ((\underbar{x}, \underbar{y})) - \kappa ((\underbar{x}', \underbar{y})) \geq 0 \\
			\nonumber \Rightarrow \qquad &\sum_{n=1}^{N} n \Big[ S_n(\mu')(x_n) - \rho(x_n) - S_n(\mu')(y_n) - \rho(y_n) \Big] \\
			\nonumber   - &\sum_{n=1}^{N} n \Big[ S_n(\mu')(x_n) - \rho(x_n) - S_n(\mu')(y_n) 
			- \rho(y_n) \Big]- m \Big[ S_m(\mu')(c') - \rho(c') \Big]\geq 0 \\
			\nonumber \Rightarrow \qquad&  - m \Big[ S_m(\mu')(c') - \rho(c') \Big]   \geq 0  \\
			\Rightarrow \qquad&  S_m(\mu')(c') \leq \rho(c').
		\end{align}
		We again consider the following point in $ \mathcal{L}$ 
		\begin{align*}
			(\underbar{x}, \underbar{y}'):= (x_1, \ldots,x_N, y_1, \ldots, y_m+c', \dots, y_N).
		\end{align*}
		Then we have
		\begin{align*}
			\kappa ((\underbar{x}, \underbar{y})) \geq \kappa ((\underbar{x}, \underbar{y}')).
		\end{align*}
		Likewise as  eq. \ref{a}, we deduce 
		\begin{align}\label{a'}
			-S_m(\mu')(c') \leq  \rho(c') .
		\end{align}
		Hence by eq. \ref{a} and eq. \ref{a'}, we have
		\begin{align}\label{b}
			\abs{S_m(\mu)(c')} \leq \epsilon \rho(c') \text{ for all } c' \in M \text{ such that } 0 \leq c' \leq c .
		\end{align}
		Now if $x \in M$ with $0 \leq x \leq 1$ and $x \leq n c$ for some $n \in \N$, consider $c':= \frac{x}{n}$. As, $ c'\leq c$, hence by eq. \ref{b} we have
		\begin{align*}
			\abs{S_m(\mu)(x)} \leq \epsilon \rho(x).
		\end{align*}
		
		Let $e=s(c)$ and $x \in eMe$ with $0 \leq x \leq e$. Since $\abs{S_m(\mu)(\cdot)},~ \rho$ are w-continuous, so by Lemma \ref{imp lem for maximal inequality}, we have
		\begin{align*}
			\abs{S_m(\mu)(x)} \leq \epsilon \rho(x).
		\end{align*}
		And consequently we achieve
		\begin{align*}
			\abs{S_n(\mu)(x)} \leq \epsilon \rho(x) \text{ for all } x \in (eMe)_+ \text{ and } n \in \{1,\ldots, N \}.
		\end{align*}
		We claim that $\rho(1-e) < \norm{\mu}/\epsilon $. 
		Consider  the point $(Tx_2, \ldots, Tx_N, 0,Ty_2, \ldots, Ty_N,0) \in R$. Since $T(1) \leq 1$, we note that  $(Tx_2, \ldots, Tx_N, 0,Ty_2, \ldots, Ty_N,0)  \in \mathcal{L}$.
		Hence,  we have  
		\begin{align*}
			\kappa((\underbar{x}, \underbar{y})) \geq \kappa((Tx_2, \ldots, Tx_N, 0,Ty_2, \ldots, Ty_N,0)  ).
		\end{align*}
		From this we deduce the following
		\begin{align*}
			& \kappa((\underbar{x}, \underbar{y})) \geq \kappa((Tx_2, \ldots, Tx_N, 0,Ty_2, \ldots, Ty_N,0)  ) \\
			\Rightarrow 	\qquad&  \mu'(x_1)- \rho(x_1) + \cdots + \sum_{k=0}^{N-1}\mu'(T^k(x_N))- N\rho(x_N)  \\
			-	&\mu'(y_1)- \rho(y_1) - \cdots - \sum_{k=0}^{N-1}\mu'(T^k(y_N)) - N\rho(y_N) \\
			&\geq \\
			&  \mu'(Tx_2)- \rho(Tx_2) + \cdots + \sum_{k=0}^{N-2}\mu'(T^{k+1}x_N)- (N-1)\rho(Tx_N)  \\
			-&\mu'(Ty_2)- \rho(Ty_2) -\cdots - \sum_{k=0}^{N-2}\mu'(T^{k+1}y_N)- (N-1)\rho(Ty_N) \\
			\Rightarrow \qquad  	&\mu'(x_1)+ \sum_{k=0}^{1}\mu'(T^k(x_2)) +\cdots + \sum_{k=0}^{N-1}\mu'(T^k(x_N))  - \sum_{k=1}^{N} k\rho(x_k) \\
			- 	&\mu'(Tx_2)   -  \sum_{k=0}^{1}\mu'(T^{k+1}x_k)-\cdots  -  \sum_{k=0}^{N-2}\mu'(T^{k+1}x_N)  +\sum_{k=1}^{N} (k-1)\rho(x_k)  \\
			& \geq \\
			&  \mu'(y_1) + \sum_{k=0}^{1}\mu'(T^k(y_2))+ \cdots +\sum_{k=0}^{N-1}\mu'(T^k(y_N))+  \sum_{k=1}^{k}k\rho(y_k) \\
			-& \mu'(Ty_2)- \sum_{k=0}^{1}\mu'(T^{k+1}y_3) -\cdots - \sum_{k=0}^{N-2}\mu'(T^{k+1}y_N)- \sum_{k=1}^{N} (k-1)\rho(y_k) \\
			\Rightarrow \qquad 	& \mu'(\sum_{k=1}^{N}x_k) -\rho(\sum_{k=1}^{N}x_k)  \geq \mu'( \sum_{k=1}^{N}y_k ) + \rho(\sum_{k=1}^{N}y_k) \\
			\Rightarrow \qquad 	& \mu'(\sum_{k=1}^{N}x_k - \sum_{k=1}^{N}y_k ) \geq \rho(\sum_{k=1}^{N}x_k + \sum_{k=1}^{N}y_k) \\
			\Rightarrow \qquad 	&\norm{\mu}/ \epsilon \geq \rho(\sum_{i=1}^N (x_i + y_i)) \\
			\Rightarrow \qquad 	& \rho(c) \geq 1- \norm{\mu}/ \epsilon, \quad (\text{since } c= 1- \sum_{i=1}^N (x_i + y_i))\\
			\Rightarrow \qquad 	&\rho(e) \geq 1- \norm{\mu}/ \epsilon, \quad (\text{since } e=s(c)).
		\end{align*}
		This completes the proof.
	\end{proof}
	
	\begin{defn}
		Let $(M, G,  T)$ be a non-commutative dynamical system and $\rho$ be  a f.n. state on $M$. Then  $(M, G,  T, \rho)$ is called kernel if 	
		\begin{enumerate}
			\item $\rho$ is $G$-invariant and 
			\item  $ T$ is sub-unital, i.e, $ T_g(1)\leq 1$ for all $g \in G$. 
		\end{enumerate}
	\end{defn}
	
	\begin{rem}\label{rem-kadison}
		Let  $(M, G,  T, \rho)$  be a kernel, then observe the following. 
		\begin{enumerate}
			\item  By Kadison's inequality \cite{Kadison:1952vc}, for all $x \in M_s$ we have $ T_g(x)^2 \leq  T_g(x^2)$, and further as $\rho$ is $G$-invariant, it follows that $$\rho( T_g(x)^2 )\leq \rho(x^2) \text{ for all } x \in M_s \text{ and } g \in G.$$
			\item As $ T_g$ is a positive map on $M$, so by Russo-Dye theorem \cite[Corollary 2.9]{Paulsen:2002tj} $\norm{ T_g} =\norm{ T_g(1)} \leq 1$. Thus, we have   $\text{sup}_{g\in G} \norm{ T_g} \leq  1$.	  
		\end{enumerate}	
	\end{rem}
	
	\subsection{Action of $\Z^d_+$}
	In this subsection, we consider $(M, \Z^d_+, T)$, the non-commutative dynamical system asociated to $\Z^d_+ $ actions. Then note that there exists positive $d$-commuting maps  $ T_1, T_2, \cdots ,T_d$ on $M$ such that $ T_{( i_1, \cdots, i_d)} (\cdot) = T_1^{i_1} T_2^{i_2}\cdots T_d^{i_d} (\cdot)$ for $ ( i_1, \cdots, i_d) \in \Z_+^d$. Then we prove a maximal inequality for the induced action on $M_*$. 
	An analogue of the following result is proved in \cite[Theorem 3.4, pp-213]{Krengel1985} for the case of classical $L^1$ spaces. In our setup we require the exact same result in a general ordered Banach space $E$. Although the proof is similar, we write it for the sake of completeness.
	We recall the following  known result which is the main ingredient in this context.  	
	\begin{lem}\cite{Krengel1985}\label{Krengel1985}
		Let $\xi(x)= 1 - \sqrt{1-x}$ for $0 \leq x \leq 1$. For $n \in \N$, write $[\xi(x)]^n = \sum_{p=0}^{\infty} \alpha_p^{(n)} x^p$. Then
		\begin{enumerate}
			\item[(i)] $\alpha_p^{(n)} =
			\begin{cases}
				0 &\text{ for } p<n \\
				\frac{n}{2p} 2^{n+1-2p} {2p-n-1 \choose p-1} &\text{ for } p\geq n .
			\end{cases}$
			\item[(ii)] If $\varphi(n)$ denotes the greatest integer less than or equal to $\sqrt{n} + 1$, then there exists $c>0$ such that 
			\begin{align*}
				\frac{1}{\varphi(n)} \sum_{0 \leq j < \varphi(n)} \alpha_{v +j}^{(j)} \alpha_{w +j}^{(j)} \geq \frac{c}{n^2} \text{ holds for all } 0 \leq v,w <n.
			\end{align*}
		\end{enumerate}
	\end{lem}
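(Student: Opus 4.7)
The plan is to prove (i) by Lagrange inversion and (ii) by a Stirling-based asymptotic followed by a direct summation.

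For part (i), observe that $y = \xi(x) = 1 - \sqrt{1-x}$ satisfies $(1-y)^2 = 1-x$, i.e.\ $x = 2y - y^2 = y(2-y)$. Hence $\xi$ is the compositional inverse of $\phi(y) = y(2-y)$ around the origin. The Lagrange inversion formula (in the form $[x^p]\psi(x)^n = \tfrac{n}{p}[y^{p-n}](y/\phi(y))^p$) then gives
\[
\alpha_p^{(n)} = [x^p]\,\xi(x)^n = \frac{n}{p}\,[y^{p-n}]\,(2-y)^{-p}.
\]
Expanding $(2-y)^{-p} = 2^{-p}\sum_{k \geq 0}\binom{p+k-1}{k}(y/2)^k$ and reading off the coefficient of $y^{p-n}$ yields $\alpha_p^{(n)} = \frac{n}{p}\,2^{n-2p}\binom{2p-n-1}{p-n}$, which via the identity $\binom{2p-n-1}{p-n} = \binom{2p-n-1}{p-1}$ matches the stated formula. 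For $p<n$, no non-negative power of $y$ in $(2-y)^{-p}$ has index $p-n$, so the Lagrange formula returns $0$.

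For part (ii), I would extract from (i) the Stirling-based asymptotic
\[
\alpha_p^{(j)} \asymp \frac{j}{p^{3/2}}, \qquad 1 \leq j \leq p/2,
\]
which reduces to the classical estimate $\binom{2m}{m} \sim 4^m/\sqrt{\pi m}$ applied to $\binom{2p-j-1}{p-1}$. In the regime relevant to (ii), one has $v+j,\ w+j \leq n + \sqrt{n}$ and $j < \varphi(n) \leq \sqrt{n}+1$; assuming for the moment that both $v$ and $w$ are at least of order $\sqrt{n}$,
\[
\alpha_{v+j}^{(j)}\alpha_{w+j}^{(j)} \gtrsim \frac{j^2}{(v+j)^{3/2}(w+j)^{3/2}} \gtrsim \frac{j^2}{n^3}.
\]
Summing $j^2$ from $1$ to $\varphi(n)-1$ produces a quantity of order $n^{3/2}$, and dividing by $\varphi(n) \asymp \sqrt{n}$ yields the required lower bound $c/n^2$.

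The main obstacle is the boundary regime in which $v$ or $w$ is very small, so that $v+j$ is comparable to $j$ and the asymptotic $\alpha_p^{(j)} \asymp j/p^{3/2}$ degenerates. Here one falls back on explicit values: for $v=0$ the formula in (i) collapses to $\alpha_j^{(j)} = 2^{-j}$, and the resulting exponential decay in $j$ concentrates enough mass on small $j$ so that, coupled with the Stirling asymptotic on the other factor $\alpha_{w+j}^{(j)}$, the total sum is of order $1/w^{3/2} \geq c'/n^{3/2}$. I would therefore split the range of $j$ into a small-$j$ window, where an explicit lower bound on the diagonal coefficient $\alpha_j^{(j)}$ is used on the smaller of $v,w$, and a moderate-$j$ window, where the Stirling asymptotic applies to both factors; either piece alone suffices for the $c/n^2$ bound after dividing by $\varphi(n)$, and taking the minimum constant over the finitely many sub-regimes produces the required uniform $c$.
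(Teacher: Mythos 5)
Two preliminary remarks. First, the paper itself gives no proof of this lemma: it is quoted verbatim from Krengel's monograph (it is Brunel's combinatorial lemma), so your attempt can only be measured against the classical argument there. Second, your part (i) is correct and complete: from $x=y(2-y)$ Lagrange inversion gives $\alpha_p^{(n)}=\frac{n}{p}\,[y^{p-n}](2-y)^{-p}=\frac{n}{p}2^{n-2p}\binom{2p-n-1}{p-n}$, and $\binom{2p-n-1}{p-n}=\binom{2p-n-1}{p-1}$ recovers the stated formula; this is a clean alternative to the classical derivation from the binomial series for $\sqrt{1-x}$.

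In part (ii), however, there is a genuine gap. Stirling applied to $\binom{2p-j-1}{p-1}$ does not give $\alpha_p^{(j)}\asymp j\,p^{-3/2}$ on the range $1\le j\le p/2$: the index $p-1$ sits at distance $\approx j/2$ from the center of the binomial row $2p-j-1$, which produces a Gaussian factor, $\alpha_p^{(j)}\asymp \frac{j}{p^{3/2}}e^{-\Theta(j^2/p)}$. A uniform lower bound $\alpha_p^{(j)}\ge c\,j\,p^{-3/2}$ therefore holds only for $j=O(\sqrt p)$. This invalidates your main regime: under your hypothesis $v,w\gtrsim\sqrt n$, take $v\asymp\sqrt n$; then for $j\asymp\sqrt n$ one has $j^2/(v+j)\asymp\sqrt n\to\infty$, so most terms of your sum are exponentially smaller than $j^2/n^3$ and the conclusion that the sum is of order $\varphi(n)^3/n^3$ fails. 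Your fallback window only treats $v$ bounded (via $\alpha_j^{(j)}=2^{-j}$), so the intermediate range $1\ll v\ll n$ is covered by neither of your two windows.

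The repair also shows why your coarsening loses the theorem rather than just constants. Assume $v\le w$ and truncate at $j\le\max(1,\lfloor\sqrt v\rfloor)$; in this window $j\le\sqrt v\le\sqrt w$, so the Gaussian factor is bounded below for both coefficients, and one must keep the fine denominators rather than replace them by $n$:
\begin{align*}
\sum_{1\le j\le\sqrt v}\alpha_{v+j}^{(j)}\alpha_{w+j}^{(j)}
\;\ge\; c\sum_{1\le j\le\sqrt v}\frac{j^2}{(v+j)^{3/2}(w+j)^{3/2}}
\;\ge\; \frac{c'}{v^{3/2}\,n^{3/2}}\sum_{1\le j\le\sqrt v}j^2
\;\ge\; \frac{c''}{n^{3/2}},
\end{align*}
the factor $v^{3/2}$ cancelling, with the $j=1$ term alone (where $\alpha_{v+1}^{(1)}\asymp (v+1)^{-3/2}$) handling $v=O(1)$; dividing by $\varphi(n)\asymp\sqrt n$ gives $c/n^2$ uniformly in $0\le v\le w<n$. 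By contrast, your per-term bound $j^2/n^3$ over the same truncated range yields only $v^{3/2}/n^3\ll n^{-3/2}$ when $v\ll n$, so the step in which you bounded $(v+j)^{-3/2}$ below by $n^{-3/2}$ is precisely where the argument breaks.
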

	First we fix the following notation. For for an integer $d>1$ notice that there is an unique $m \in \N$ such that $2^{m-1}< d \leq 2^m$. For $n \in \N$ and $d>1$ we fix $n_d := \varphi^m(n)$, where $\varphi^m:= \varphi \circ \cdots \circ \varphi$($m$ times).

	\begin{thm}\label{erg inq 1}
		Let $E$ be an ordered Banach space and $d>1$ be an integer. Then there exists $\chi_d>0$ and a family $\{a(u): u=(u_1, \ldots, u_d) \in \Z_+^d\}$ of strictly positive numbers summing to $1$ such that the following holds: If $T_1, \ldots, T_d$ are commuting positive contractions of $E$, then the operator 
		\begin{align*}
			U= \sum_{u \in Z_+^d} a(u) T_1^{u_1} \cdots T_d^{u_d}
		\end{align*}
		satisfies
		\begin{align*}
			\frac{1}{n^d} \sum_{0 \leq i_1 <n} \cdots  \sum_{0 \leq i_d <n} T_1^{i_1} \cdots T_d^{i_d} f \leq \frac{\chi_d}{n_d} \sum_{j=0}^{n_d -1} U^j f 
		\end{align*}
		for all $n \in \N$ and $f \in E_+$.
	\end{thm}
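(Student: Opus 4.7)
The plan is to reduce the $d$-fold average to iterates of a single operator $U$ by repeatedly pairing variables via the power series associated to $\xi$. I would first introduce $\tilde\xi(x) := \xi(x)/x = \sum_{p\geq 0}\alpha^{(1)}_{p+1}x^{p}$, whose $j$-th power is
\begin{equation*}
\tilde\xi(x)^{j} = \sum_{p\geq 0}\alpha^{(j)}_{p+j}\,x^{p}.
\end{equation*}
By Lemma \ref{Krengel1985}(i) the coefficients $\alpha^{(j)}_{p+j}$ are strictly positive, and $\tilde\xi(1)=1$, so $\tilde\xi$ sends any positive contraction of $E$ to a positive contraction.

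\emph{Base case $d=2$.} Set $U:=\tilde\xi(T_{1})\tilde\xi(T_{2})$. Using commutativity of $T_{1}$ and $T_{2}$,
\begin{equation*}
U^{j}=\sum_{p,q\geq 0}\alpha^{(j)}_{p+j}\alpha^{(j)}_{q+j}\,T_{1}^{p}T_{2}^{q}.
\end{equation*}
Averaging over $0\leq j<\varphi(n)$ and invoking Lemma \ref{Krengel1985}(ii) shows that the coefficient of $T_{1}^{p}T_{2}^{q}$ in $\tfrac{1}{\varphi(n)}\sum_{j=0}^{\varphi(n)-1}U^{j}$ is at least $c/n^{2}$ whenever $0\leq p,q<n$. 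Since every remaining coefficient is non-negative and $f\in E_{+}$, this delivers the desired inequality with $\chi_{2}=1/c$ and $a(u_{1},u_{2})=\alpha^{(1)}_{u_{1}+1}\alpha^{(1)}_{u_{2}+1}$.

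\emph{Inductive step.} For general $d$, choose $m$ with $2^{m-1}<d\leq 2^{m}$ and pad by setting $T_{d+1}=\cdots=T_{2^{m}}=I$; since $I^{i}f=f$, the left-hand side of the desired estimate is unchanged. Then iteratively pair: at level $k=1,\ldots,m$ define $S^{(k)}_{i} := \tilde\xi(S^{(k-1)}_{2i-1})\tilde\xi(S^{(k-1)}_{2i})$, with $S^{(0)}_{j} := T_{j}$. Each $S^{(k)}_{i}$ depends only on a disjoint block of the original $T_{j}$, so the family $\{S^{(k)}_{i}\}_{i}$ at every level is again a commuting collection of positive contractions. Applying the $d=2$ bound to each pair in parallel---which is legitimate because all operators surrounding the replaced pair are positive and commute with it---halves the number of variables at each level while replacing $n$ by $\varphi(n)$. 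After $m$ rounds only $U:=S^{(m)}_{1}$ survives, the index range has shrunk to $\varphi^{m}(n)=n_{d}$, and the constant accumulates to $\chi_{2}^{2^{m-1}+2^{m-2}+\cdots+1}=\chi_{2}^{2^{m}-1}=:\chi_{d}$. Expanding $U$ as a power series in $T_{1},\ldots,T_{2^{m}}$, substituting $T_{d+1}=\cdots=T_{2^{m}}=I$, and marginalising over the identity indices yields $U=\sum_{u\in\Z_{+}^{d}}a(u)T_{1}^{u_{1}}\cdots T_{d}^{u_{d}}$; strict positivity of $a(u)$ is inherited from that of the $\tilde\xi$-coefficients, and $\sum_{u}a(u)=1$ from $\tilde\xi(1)=1$.

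The main obstacle I anticipate is the careful propagation of the $d=2$ inequality through the nested iteration: one must verify that the intermediate operators $S^{(k)}_{i}$ remain commuting positive contractions on $E$, and that inserting a positive inequality inside a positive product of surrounding operators preserves the direction of the inequality---a routine but attention-demanding verification that crucially uses both positivity and commutativity at every level.
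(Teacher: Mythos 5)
Your proposal is correct and follows essentially the same route as the paper: the same operator $\overline{\xi}(x)=\xi(x)/x$ applied leafwise, the same padding by identities up to $2^m$ variables, the same base case $d=2$ via Lemma \ref{Krengel1985}(ii), and the same recursive binary pairing to build $U$. Your only departures are organizational --- you make the induction explicit via the level operators $S^{(k)}_i$ and track the constant as $\chi_d=\chi_2^{2^m-1}$, which is consistent with the paper's displayed $d=4$ computation (which yields $1/c^3$, i.e.\ $\chi_2^{2^2-1}$).
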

	
	\begin{proof}
		Enough to consider $d=2^m$ for one can put $T_j=I$ for all $d < j \leq 2^m$ if $d< 2^m$.
		Let us denote by $\overline{\xi}(x)= \frac{\xi(x)}{x}$ for $0 < x \leq 1$. Then, by (i) of  Lemma \ref{Krengel1985} we have 
		\begin{align*}
			\overline{\xi}(x) = \sum_{\lambda =0}^{\infty} \alpha_{\lambda +1}^{(1)} x^\lambda.
		\end{align*}
		Now if $T$ is any contraction, then $\overline{\xi}(T)= \sum_{\lambda =0}^{\infty} \alpha_{\lambda +1}^{(1)} T^\lambda$ is again a contraction. Consequently, since $\alpha_\lambda^{(j)}=0$ for all $\lambda < j$ we have
		\begin{align*}
			(\overline{\xi}(T))^j = \sum_{\lambda =0}^{\infty} \alpha_{j + \lambda}^{(j)} T^\lambda
		\end{align*}
		for every $j \in \N$. When $d=2$ we first consider the contraction 
		\begin{align*}
			U= \overline{\xi}(T_1) \circ \overline{\xi}(T_2).
		\end{align*}
		Note that since $T_1$ and $T_2$ commutes we have
		\begin{align*}
			\frac{1}{\varphi(n)} \sum_{j < \varphi(n)} U^j 
			&\geq \frac{1}{\varphi(n)}  \sum_{0 \leq i_1, i_2< n} \sum_{j < \varphi(n)} \alpha_{j + i_1}^{(j)} \alpha_{j + i_1}^{(j)} T_1^{i_1} T_2^{i_2} \\
			& \geq \frac{c}{n^2} \sum_{0 \leq i_1, i_2< n} T_1^{i_1} T_2^{i_2} ~~(\text{by Lemma } \ref{Krengel1985}~(ii)).
		\end{align*}
		Therefore, the result holds for $d=2$ with $\chi_d= 1/c$. Next, assume that $d=4$. In this case consider the contraction 
		\begin{align*}
			U= \overline{\xi}(U_1) \circ \overline{\xi}(U_2),
		\end{align*}
		where $U_1= \overline{\xi}(T_1) \circ \overline{\xi}(T_2)$ and $U_2= \overline{\xi}(T_3) \circ \overline{\xi}(T_4)$. Hence, we obtain
		\begin{align*}
			\frac{1}{\varphi(\varphi(n))} \sum_{0 \leq j < n_4} U^j 
			&\geq \frac{1}{\varphi(\varphi(n))} \sum_{0 \leq j < \varphi(n)} U^j \\
			&\geq \frac{1}{\varphi(\varphi(n))} \sum_{0 \leq i_1, i_2< \varphi(n)} \sum_{j < \varphi(\varphi(n))} \alpha_{j + i_1}^{(j)} \alpha_{j + i_1}^{(j)} U_1^{i_1} U_2^{i_2}\\
			& \geq \frac{c}{\varphi(n)^2} \sum_{0 \leq i_1, i_2< \varphi(n)} U_1^{i_1} U_2^{i_2} ~~(\text{by Lemma } \ref{Krengel1985}~(ii)) \\
			& \geq c\Big( \frac{1}{\varphi(n)} \sum_{0 \leq i_1< \varphi(n)} U_1^{i_1} \Big) \circ \Big( \frac{1}{\varphi(n)} \sum_{0 \leq i_2< \varphi(n)} U_2^{i_2} \Big) \\
			& \geq \frac{c^3}{n^4} \sum_{0 \leq i_1, i_2, i_3, i_4< n} T_1^{i_1} T_2^{i_2} T_3^{i_3} T_4^{i_4}.
		\end{align*}
		Therefore, the result holds for $d=3,4$ with $\chi_3= \frac{1}{c^3}$ and $\chi_4= \frac{1}{c^4}$ respectively.
		
		Now if for $d>1$ the contractions $\{T_1, \ldots, T_{2d}\}$ are given, then following the method described above, first construct a contraction $U_1$ using $\{T_1, \ldots, T_d\}$, and then construct another contraction $U_2$ using $\{T_{d+1}, \ldots, T_{2d}\}$. Finally consider $U:= \overline{\xi}(U_1) \circ \overline{\xi}(U_2)$.
		
		Moreover, observe that it follows from the construction that 
		$$U= \sum_{u \in Z_+^d} a(u) T_1^{u_1} \cdots T_d^{u_d},$$
		where $a(u)>0$ for all $u \in \Z_+^d$ and $\sum_{u \in Z_+^d} a(u)=1$.
	\end{proof}
	
	\begin{defn}
		For $n \in \Z_+$,
		$$A_n(f) := \frac{1}{n^d} \sum_{0 \leq i_1 <n} \cdots  \sum_{0 \leq i_d <n} T_1^{i_1} \cdots T_d^{i_d} (f),~~ f \in E. $$
	\end{defn}

	\begin{thm}
		Let $(M, \Z_d^+, T, \rho )$ be a kernel. Also assume $\mu \in M_{* s}$ and $\epsilon>0$, then for any $N \in \N$ there exists a projection $e \in M$ such that $\rho(1-e) < \frac{\chi_d \norm{\mu}}{\epsilon} $ and 
		\begin{align*}
			\abs{A_n(\mu)(x)} \leq \epsilon \rho(x) \text{ for all } x \in (eMe)_+ \text{ and } n \in \{1,\ldots, N \}.
		\end{align*}
	\end{thm}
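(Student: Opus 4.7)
The plan is to reduce this $d$-parameter maximal inequality to the single-operator inequality of Theorem \ref{maximal lemma} via the domination principle in Theorem \ref{erg inq 1}. Since each $T_i$ is positive and sub-unital, Russo--Dye (Remark \ref{rem-kadison}) gives $\norm{T_i}\le 1$, so its predual $\hat T_i$ is a positive contraction on $M_*$, and the family $\{\hat T_i\}$ commutes because $\{T_i\}$ does. Applying Theorem \ref{erg inq 1} to the ordered Banach space $E=M_*$ with these commuting positive contractions produces coefficients $\{a(u)\}_{u\in\Z_+^d}$ with $a(u)>0$, $\sum_u a(u)=1$, and a positive contraction
\[
U_* \;=\; \sum_{u\in \Z_+^d} a(u)\, \hat T_1^{u_1}\cdots \hat T_d^{u_d}
\]
on $M_*$ such that $A_n(\nu)\le \frac{\chi_d}{n_d}\sum_{j=0}^{n_d-1} U_*^j(\nu)$ for every $\nu\in M_{*+}$ and every $n\in\N$.

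Next I would lift this back to $M$. Let $U:=(U_*)^*$; being a dual map, $U$ is automatically $w$-continuous, and a direct identification shows $U=\sum_{u\in\Z_+^d} a(u)\, T_1^{u_1}\cdots T_d^{u_d}$ on $M$, with the series converging in operator norm since $\sum_u a(u)=1$ and each summand is a contraction. Consequently $U$ is positive, sub-unital ($U(1)\le\sum_u a(u)\cdot 1=1$), and $\rho$-preserving (each $T_g$ is), so $U$ satisfies the hypotheses of Theorem \ref{maximal lemma}.

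To finish, decompose $\mu=\mu_+-\mu_-$ via Jordan decomposition, set $|\mu|:=\mu_++\mu_-\in M_{*+}$ (so $\norm{|\mu|}=\norm{\mu}$), and put $\epsilon':=\epsilon/\chi_d$. By monotonicity of $n\mapsto n_d$, let $N_d$ be the value at $n=N$, which bounds $n_d$ for every $n\le N$. Apply Theorem \ref{maximal lemma} to the map $U$, the functional $|\mu|$, tolerance $\epsilon'$, and cutoff $N_d$; this yields $e\in\CP(M)$ with $\rho(1-e)<\norm{|\mu|}/\epsilon'=\chi_d\norm{\mu}/\epsilon$ and $S_k(|\mu|)(x)\le \epsilon'\rho(x)$ for $x\in(eMe)_+$ and $1\le k\le N_d$. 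Combining with the domination from Theorem \ref{erg inq 1}, for $x\in (eMe)_+$ and $1\le n\le N$,
\[
|A_n(\mu)(x)| \;\leq\; A_n(|\mu|)(x) \;\leq\; \chi_d\, S_{n_d}(|\mu|)(x) \;\leq\; \chi_d\,\epsilon'\,\rho(x)\;=\;\epsilon\,\rho(x),
\]
which is exactly the required estimate. No step is genuinely hard; the only item that warrants care is that $U$ inherits sub-unitality, positivity, and $w$-continuity from the infinite series, but this is routine once $\sum_u a(u)=1$ is used to secure norm convergence in $\CB(M_*)$ and identify $U$ with the displayed dual map.
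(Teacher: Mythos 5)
Your proof is correct and takes essentially the same route as the paper: dominate $A_n$ by Ces\`aro averages of the single operator $U$ via Theorem \ref{erg inq 1}, then apply Theorem \ref{maximal lemma} to $U$ with tolerance $\epsilon/\chi_d$ and cutoff $N_d$, using the monotonicity $n_d \leq N_d$. The only difference is that you make explicit the details the paper leaves implicit --- the predual/dual identification of $U$ and its positivity, sub-unitality, $\rho$-invariance and $w$-continuity, and the reduction of self-adjoint $\mu$ to $\abs{\mu}$ behind the paper's phrase ``it is enough to consider $\mu \in M_{*+}$''.
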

	
	\begin{proof}
		Let $\epsilon>0$ and $N \in \N$. It is enough to consider $\mu \in M_{* +}$. Then there exists $e \in \CP_0(M)$ such that $\rho(1-e) < \frac{\chi_d \norm{\mu}}{\epsilon}$ and 
		\begin{align*}
			\abs{\frac{1}{n} \sum_{j=0}^{n -1} U^j (\mu)(x)} \leq \frac{\epsilon}{\chi_d} \rho(x) \text{ for all } x \in (eMe)_+ \text{ and } n \in \{1,\ldots, N_d \}.
		\end{align*}
		Now consider $n \in \{1,\ldots,N\}$. Note that $n_d \leq N_d$. Therefore,
		\begin{align*}
			A_n(\mu)(x) \leq \frac{\chi_d}{n_d} \sum_{j=0}^{n_d -1} U^j(\mu)(x) \leq \epsilon  \rho(x) \text{ for all } x \in (eMe)_+.
		\end{align*}
	\end{proof}

	\subsection{Action of $\R^d_+$}
	In this subsection, we consider $(M, \R^d_+, T)$ and we deduce a maximal inequality for the induced action on the predual $M_*$.  We follow the same technique as it is obtained in \cite{brunel73} for the case of classical $L^1$- spaces to establish our case for the interest to ameliorate the exposition  of the  exposition of this article. 
	\begin{defn}
		Let $a \in \R_+$ and define the set $Q_a:= \{(t_1, \ldots, t_d) \in \R^d_+ : t_1<a, \ldots, t_d< a\}$. Then consider the following averages.
		
		\begin{align*}
			M_a(f):= \frac{1}{a^d} \int_{Q_a} T_t(f) dt.
		\end{align*}
	\end{defn}
	
	\begin{rem}
		In particular, when $d>1$ be an integer and $n \in \Z_+$, note that
		\begin{align*}
			M_n(f):= \frac{1}{n^d} \sum_{(j_1,\ldots, j_d) \in \Z^d_+ \cap Q_n} \int_{Q_{j_1,\ldots, j_d}} T_t(f) dt,
		\end{align*}
		where for any $(j_1,\ldots,j_d) \in \Z^d_+$, $Q_{j_1,\ldots, j_d}:= \{(t_1, \ldots, t_d) \in \R^d_+ : j_1\leq t_1<j_1+1, \ldots, j_d \leq t_d< j_d+ 1\}$. It also follows that
		\begin{align*}
			M_1(f)= \int_{Q_{0,\ldots, 0}} T_t(f) dt= \int_{[0,1)^d} T_t(f) dt.
		\end{align*}
		
		Further, denoting $S_1= T_{(1,0,\ldots,0,0)},~~ S_2= T_{(0,1,0,\ldots,0)}, ~~\cdots, S_d= T_{(0,0,\ldots,0,1)}$ we observe that 
		\begin{align*}
			M_n(f):= \frac{1}{n^d} \sum_{(j_1,\ldots, j_d) \in \Z^d_+ \cap Q_n} S_1^{j_1} \cdots S_d^{j_d} (M_1(f))
		\end{align*}
	\end{rem}
	
	\begin{thm}\label{erg ieq 2}
		Let $E$ be an ordered Banach space and $d>1$ be an integer. Then for all $a>0$ there exists $n \in \N$ such that 
		\begin{align*}
			M_a(f) \leq \frac{1}{n^d} \sum_{(j_1,\ldots, j_d) \in \Z^d_+ \cap Q_{n+1}}  S_1^{j_1} \cdots S_d^{j_d} M_1(f)= \frac{1}{n^d} \sum_{0 \leq j_1 <n+1} \cdots  \sum_{0 \leq j_d <n+1} S_1^{j_1} \cdots S_d^{j_d} M_1(f)
		\end{align*}
		for all $f \in E_+$.
	\end{thm}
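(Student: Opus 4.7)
The plan is to choose $n$ so that $Q_a$ is contained in $Q_{n+1}$, and then to compare the Bochner integral defining $M_a(f)$ with the same integral over $Q_{n+1}$, which naturally decomposes as a sum indexed by the integer lattice points it contains. Concretely, I would set $n := \max\{\lfloor a \rfloor, 1\}$, so that $n \geq 1$ and $a < n+1$; in particular $Q_a \subseteq Q_{n+1}$.

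First, using positivity of $f$ together with hypothesis (P) (each $T_t$ is positive), and the order-preserving character of vector-valued integration, I would deduce
$$\int_{Q_a} T_t(f)\, dt \;\leq\; \int_{Q_{n+1}} T_t(f)\, dt \;=\; \sum_{(j_1,\ldots,j_d)\in\Z_+^d\cap Q_{n+1}} \int_{Q_{j_1,\ldots,j_d}} T_t(f)\, dt,$$
where the last equality partitions $Q_{n+1}$ into the unit cubes $Q_{j_1,\ldots,j_d}$ exactly as recalled in the remark preceding the theorem.

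Second, I would factor each cube-integral as $S_1^{j_1}\cdots S_d^{j_d}\, M_1(f)$. Using the semigroup identity
$$T_{(s_1+j_1,\ldots,s_d+j_d)} \;=\; S_1^{j_1}\cdots S_d^{j_d}\circ T_{(s_1,\ldots,s_d)},$$
the translation $t \mapsto t+(j_1,\ldots,j_d)$ inside $Q_{j_1,\ldots,j_d}$ and the fact that a bounded linear operator commutes with a Bochner integral immediately yield the factorization. Summing over the lattice points and combining with the inequality of the previous paragraph, then dividing by $a^d$, the elementary bound $a \geq n$ gives $a^{-d} \leq n^{-d}$, which is exactly the desired inequality.

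The argument is essentially a change-of-variables computation and presents no substantial obstacle; the only point deserving technical care is the justification of pulling the operator $S_1^{j_1}\cdots S_d^{j_d}$ outside the Bochner integral. This rests on the strong (respectively $w^*$-) continuity of $t \mapsto T_t(f)$ and on the uniform boundedness of $\{T_t\}_{t\in\R_+^d}$, which are precisely hypotheses (C) and (UB) of Definition \ref{action on Banach sp}. Apart from this, the proof is pure accounting, and once combined with Theorem \ref{erg inq 1} it reduces the $\R_+^d$-maximal inequality to its $\Z_+^d$-counterpart.
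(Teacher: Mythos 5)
Your proof is correct for $a \geq 1$ and is essentially identical to the paper's: the same choice $n=\lfloor a\rfloor$, the same enlargement of the domain of integration from $Q_a$ to $Q_{n+1}$ via positivity, the same partition of $Q_{n+1}$ into unit cubes with the factorization $\int_{Q_{j_1,\ldots,j_d}}T_t(f)\,dt=S_1^{j_1}\cdots S_d^{j_d}M_1(f)$, and the bound $a^{-d}\leq n^{-d}$. One caveat: for $0<a<1$ your fallback $n=\max\{\lfloor a\rfloor,1\}=1$ breaks your own final step $a\geq n$ (and the stated inequality can genuinely fail for small $a$, e.g.\ $E=\R$ with $T_t=e^{-(t_1+\cdots+t_d)}\,\mathrm{id}$ and $d=2$), but the paper's proof has the same blind spot (there $n=[a]=0$, making $1/n^d$ meaningless), and the theorem is only ever invoked for $1\leq a\leq N$, where both arguments are sound.
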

	
	\begin{proof}
		Let $a>0$. Choose $n= [a]$ . Then 
		\begin{align*}
			M_a(f) 
			&\leq \frac{1}{n^d} \int_{Q_{n+1}} T_t(f) dt \\
			&= \frac{1}{n^d} \sum_{(j_1,\ldots, j_d) \in \Z^d_+ \cap Q_{n+1}} \int_{Q_{j_1,\ldots, j_d}} T_t(f) dt \\
			&= \frac{1}{n^d} \sum_{(j_1,\ldots, j_d) \in \Z^d_+ \cap Q_{n+1}} S_1^{j_1} \cdots S_d^{j_d} (M_1(f))\\
			&=\frac{1}{n^d} \sum_{0 \leq j_1 <n+1} \cdots  \sum_{0 \leq j_d <n+1} S_1^{j_1} \cdots S_d^{j_d} M_1(f).
		\end{align*}
	\end{proof}
	
	\begin{prop}
		Let $E$ be an ordered Banach space and $d>1$ be an integer. Then there exists $\chi_d>0$  and a family $\{a(u): u=(u_1, \ldots, u_d) \in \Z_+^d\}$ of strictly positive numbers summing to $1$ such that the following holds:
		\begin{align*}
			U= \sum_{u \in Z_+^d} a(u) S_1^{u_1} \cdots S_d^{u_d}
		\end{align*}
		and for all $a>0$ and $f \in E_+$
		\begin{align*}
			M_a(f) \leq \frac{\chi_d}{([a]+1)_d} \sum_{j=0}^{([a]+1)_d-1} U^j M_1(f) .
		\end{align*}
	\end{prop}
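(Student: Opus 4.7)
The plan is to obtain the bound by composing the two estimates already proved in this section. Fix $f \in E_+$ and $a > 0$, and set $n = [a]$. For $a \geq 1$ (so $n \geq 1$), Theorem \ref{erg ieq 2} immediately yields
\[
M_a(f) \;\leq\; \frac{1}{n^d} \sum_{0 \leq j_1 < n+1} \cdots \sum_{0 \leq j_d < n+1} S_1^{j_1} \cdots S_d^{j_d} M_1(f).
\]
Next, I would apply Theorem \ref{erg inq 1} to the operators $S_1, \ldots, S_d$, which are commuting positive contractions on $E$: commutativity follows from the fact that $T$ is an action of the abelian semigroup $\R^d_+$, and contractivity follows from the (UB) condition together with positivity (and, in the noncommutative setting, from the Russo--Dye observation in Remark \ref{rem-kadison}(2), possibly after normalizing). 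Theorem \ref{erg inq 1} then produces $\chi_d > 0$, positive weights $\{a(u)\}_{u \in \Z_+^d}$ summing to $1$, and the operator $U = \sum_{u \in \Z_+^d} a(u) S_1^{u_1} \cdots S_d^{u_d}$ satisfying
\[
\frac{1}{(n+1)^d} \sum_{0 \leq j_1 < n+1} \cdots \sum_{0 \leq j_d < n+1} S_1^{j_1} \cdots S_d^{j_d} g \;\leq\; \frac{\chi_d}{(n+1)_d} \sum_{j=0}^{(n+1)_d - 1} U^j g
\]
for every $g \in E_+$.

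Applying this with $g = M_1(f) \in E_+$ and chaining with the first inequality gives
\[
M_a(f) \;\leq\; \Bigl(\frac{n+1}{n}\Bigr)^d \cdot \frac{\chi_d}{(n+1)_d} \sum_{j=0}^{(n+1)_d - 1} U^j M_1(f).
\]
For $n \geq 1$ the prefactor $((n+1)/n)^d \leq 2^d$, so absorbing this factor into the constant (and still writing it as $\chi_d$) delivers the claimed estimate with index $([a]+1)_d$ in place of $(n+1)_d$.

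The one genuine nuisance, and what I would expect to be the only real obstacle, is the boundary range $0 < a < 1$, where $[a] = 0$ and Theorem \ref{erg ieq 2} does not apply directly; here one must argue separately, for instance by noting that $Q_a \subseteq Q_1$ so $a^d M_a(f) \leq M_1(f)$ pointwise in $E_+$, and then comparing with the $j = 0$ term $U^0 M_1(f) = M_1(f)$ on the right-hand side after enlarging $\chi_d$ if necessary. All the genuinely nontrivial work (the construction of the operator $U$ and the sparsification inequality) has already been carried out in Theorems \ref{erg ieq 2} and \ref{erg inq 1}, so the present proposition is essentially a bookkeeping consequence of composing them.
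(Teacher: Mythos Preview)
Your approach is exactly the paper's: the proof there reads in full ``It is clear from Theorem \ref{erg inq 1} and Theorem \ref{erg ieq 2}'', and your chaining of those two estimates (together with absorbing the harmless factor $((n+1)/n)^d\le 2^d$ into $\chi_d$) is the intended argument.

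One small correction: your proposed treatment of the range $0<a<1$ does not work as written. From $Q_a\subseteq Q_1$ you get $M_a(f)\le a^{-d}M_1(f)$, but the right-hand side of the proposition for $[a]=0$ is a fixed quantity independent of $a$, so no single enlargement of $\chi_d$ can absorb the factor $a^{-d}$ uniformly over $a\in(0,1)$. In fact the inequality as literally stated need not hold for arbitrarily small $a$. This is harmless for the paper: the proposition is only ever invoked (in Theorem \ref{Maximal Inequality Real}) for $1\le a\le N$, where your argument is complete. The cleanest fix is simply to read the hypothesis as $a\ge 1$.
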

	
	\begin{proof}
		It is clear from Theorem \ref{erg inq 1} and Theorem \ref{erg ieq 2}.
	\end{proof}

	\begin{thm}\label{Maximal Inequality Real}
		Let $(M, \R^d_+, T, \rho )$ be a kernel. Also assume $\mu \in M_{* s}$ and $\epsilon>0$. Then for any $N \in \N$ there exists a projection $e \in M$ such that $\rho(1-e) < \frac{\chi_d \norm{\mu}}{\epsilon} $ and 
		\begin{align*}
			\abs{M_a(\mu)(x)} \leq \epsilon \rho(x) \text{ for all } x \in (eMe)_+ \text{ and } 1 \leq a \leq N.
		\end{align*}
	\end{thm}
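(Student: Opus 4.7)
The plan is to dominate the continuous $\R_+^d$ average $M_a(\mu)$ by the discrete ergodic averages of the single map $U$ constructed in the preceding Proposition, and then invoke the one-parameter maximal inequality (Theorem~\ref{maximal lemma}) applied to $U$ with the functional $M_1(|\mu|)$. In this way the $\R_+^d$ case is reduced to the $\Z_+$ case for a cleverly chosen auxiliary contraction, exactly as in the proof of the $\Z_+^d$ analogue.

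First, I would reduce to the positive case by Jordan decomposition: writing $\mu = \mu^+ - \mu^-$ and $|\mu| := \mu^+ + \mu^-$, one has $\norm{|\mu|} = \norm{\mu}$ and $\abs{M_a(\mu)(x)} \leq M_a(|\mu|)(x)$ for all $x \in M_+$, since $M_a$ is positive on $M_*$. Set $\nu := M_1(|\mu|) \in M_{*+}$; because $M_1$ is an average of positive contractions, $\norm{\nu} \leq \norm{\mu}$. Next, I would verify that $U = \sum_{u \in \Z_+^d} a(u)\, S_1^{u_1}\cdots S_d^{u_d}$ satisfies the hypotheses of Theorem~\ref{maximal lemma}: positivity is immediate, sub-unitality follows from $U(1) = \sum a(u)\, T_u(1) \leq \sum a(u) = 1$, $\rho$-invariance follows from $\rho \circ T_u = \rho$ together with $\sum a(u) = 1$, and $w$-continuity follows because the series defining $U$ converges in operator norm (as $\norm{T_u} \leq 1$ and $\sum a(u) = 1$) while each summand is $w$-continuous by hypothesis~(C).

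With these ingredients, apply Theorem~\ref{maximal lemma} to $U$, the positive functional $\nu$, the constant $\epsilon' := \epsilon/\chi_d$, and the threshold $N' := (N+1)_d$; this produces a projection $e \in \CP(M)$ with
\[ \rho(1-e) < \frac{\norm{\nu}}{\epsilon'} = \frac{\chi_d \norm{\nu}}{\epsilon} \leq \frac{\chi_d \norm{\mu}}{\epsilon} \]
and $\frac{1}{n}\sum_{j=0}^{n-1} U^j(\nu)(x) \leq (\epsilon/\chi_d)\,\rho(x)$ for all $x \in (eMe)_+$ and $1 \leq n \leq N'$. For any $1 \leq a \leq N$ we have $([a]+1)_d \leq (N+1)_d = N'$, so the preceding Proposition combined with the Jordan bound gives
\[ \abs{M_a(\mu)(x)} \leq M_a(|\mu|)(x) \leq \frac{\chi_d}{([a]+1)_d}\sum_{j=0}^{([a]+1)_d - 1} U^j(\nu)(x) \leq \epsilon\,\rho(x), \]
for every $x \in (eMe)_+$, which is the desired conclusion.

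I expect the main obstacle to be essentially bookkeeping: ensuring that the domination inequality of the preceding Proposition (stated for $f \in E_+$) interacts with the Jordan decomposition without inflating the constant $\chi_d$, and verifying that the infinite convex combination defining $U$ genuinely inherits $w$-continuity, $\rho$-invariance and sub-unitality from the generators $T_t$. Once these are in place, the argument is a direct transcription of the proof of the $\Z_+^d$ analogue, with $M_1(|\mu|)$ replacing $\mu$ at the point where Theorem~\ref{maximal lemma} is invoked.
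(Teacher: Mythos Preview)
Your proposal is correct and follows essentially the same route as the paper: reduce to the positive case, apply Theorem~\ref{maximal lemma} to the single contraction $U$ with the functional $M_1(\mu)$ (or $M_1(|\mu|)$), threshold $(N+1)_d$ and parameter $\epsilon/\chi_d$, and then invoke the domination inequality from the preceding Proposition. The only difference is that you spell out the Jordan decomposition and the verification that $U$ is sub-unital, $\rho$-preserving and $w$-continuous, whereas the paper leaves these implicit.
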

	
	\begin{proof}
		Let $\epsilon>0$ and $N \in \N$. Enough to consider $\mu \in M_{* +}$. Then there exists $e \in \CP_0(M)$ such that $\rho(1-e) < \frac{\chi_d \norm{\mu}}{\epsilon}$ and 
		\begin{align*}
			\abs{\frac{1}{n} \sum_{j=0}^{n -1} U^j (M_1(\mu))(x)} \leq \frac{\epsilon}{\chi_d} \rho(x) \text{ for all } x \in (eMe)_+ \text{ and } n \in \{1,\ldots, (N+1)_d \}.
		\end{align*}
		Now consider $1 \leq a \leq N$. Note that $[a]_d \leq N_d$. Therefore,
		\begin{align*}
			M_a(\mu)(x) \leq \frac{\chi_d}{([a]+1)_d} \sum_{j=0}^{([a]+1)_d -1} U^j(M_1(\mu))(x) \leq \epsilon  \rho(x) \text{ for all } x \in (eMe)_+.
		\end{align*}
	\end{proof}
	
	We assume that  $G$ is  either $\Z_+^d $ or  $\R_+^d $. Let $(G, T, M ) $ be a non commutative dynamical system and we summarize the notation as follows. 
	\begin{align*}
		M_a(\cdot):= 	
		\begin{cases}
			\frac{1}{a^d} \sum_{0 \leq i_1 <a} \cdots  \sum_{0 \leq i_d <a} T_1^{i_1} \cdots T_d^{i_d} (\cdot)&\text{ when  } G = \Z_+^d,~ a \in \N, \\\\
			\frac{1}{a^d} \int_{Q_a} T_t(\cdot) dt &  \text{  when } G = \R_+^d, ~a \in \R_+.
		\end{cases}
	\end{align*}
	
	We also combine   the obtained maximal inequality as follows; 
	\begin{thm}\label{Maximal Inequality Real-1}
		
		Let $G$ be either $\Z_+^d $ or  $\R_+^d $ and  $(M, G, T, \rho )$ be a kernel.		
		Let $\mu \in M_{* s}$ and $\epsilon>0$, then for any $N \in \N$ there exists a projection $e \in M$ such that $\rho(1-e) < \frac{\chi_d \norm{\mu}}{\epsilon} $ and 
		\begin{align*}
			\abs{M_a(\mu)(x)} \leq \epsilon \rho(x) \text{ for all } x \in (eMe)_+ \text{ and } 1 \leq a \leq N.
		\end{align*}
	\end{thm}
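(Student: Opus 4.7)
The plan is to observe that Theorem \ref{Maximal Inequality Real-1} is, by design, the unified statement of the two maximal inequalities already proved in this section. So the proof reduces to a case split according to whether $G=\Z_+^d$ or $G=\R_+^d$, and in each case we simply quote the relevant earlier theorem. There is no new content to be developed; the work is purely to match the averages $M_a(\cdot)$ in the combined statement with the averages $A_n(\cdot)$ of the $\Z_+^d$ subsection and the averages $M_a(\cdot)$ of the $\R_+^d$ subsection.

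First I would treat the case $G=\Z_+^d$. Here $a\in\N$ and, unpacking the definition at the top of the theorem, $M_a(\mu)=A_a(\mu)$ in the notation of Subsection 4.1. Then the conclusion is exactly the unnamed theorem proved immediately after Definition of $A_n$, which for a kernel $(M,\Z_+^d,T,\rho)$ and given $\mu\in M_{*s}$, $\epsilon>0$, $N\in\N$ produces a projection $e$ with $\rho(1-e)<\chi_d\norm{\mu}/\epsilon$ and $\abs{A_n(\mu)(x)}\le\epsilon\rho(x)$ for all $x\in(eMe)_+$ and $1\le n\le N$. So this case is immediate.

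Next I would treat the case $G=\R_+^d$. Here $a\in\R_+$, and the averages $M_a(\mu)$ in the statement agree verbatim with the averages $M_a(\mu)$ defined at the start of Subsection 4.2. Then the conclusion is exactly Theorem \ref{Maximal Inequality Real}, which for a kernel $(M,\R_+^d,T,\rho)$ and given $\mu\in M_{*s}$, $\epsilon>0$, $N\in\N$ produces a projection $e$ with $\rho(1-e)<\chi_d\norm{\mu}/\epsilon$ and $\abs{M_a(\mu)(x)}\le\epsilon\rho(x)$ for all $x\in(eMe)_+$ and $1\le a\le N$. So this case is also immediate.

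There is no real obstacle here; the combined statement is essentially a bookkeeping device introduced so that the subsequent pointwise ergodic theorem (Theorem \ref{bau conv theorem-intro}) can be phrased uniformly in $G$. The only minor point to verify is that the constant $\chi_d$ and the norm control on $1-e$ are the same in both subsections, which is the case because the $\R_+^d$ version was deduced from the $\Z_+^d$ version via the domination $M_a(f)\le\tfrac{\chi_d}{([a]+1)_d}\sum_{j=0}^{([a]+1)_d-1}U^jM_1(f)$, and this domination passes the very same constant $\chi_d$ through.
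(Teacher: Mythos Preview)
Your proposal is correct and matches the paper's approach exactly: the paper introduces Theorem \ref{Maximal Inequality Real-1} with the words ``We also combine the obtained maximal inequality as follows'' and provides no separate proof, since it is by construction just the union of the $\Z_+^d$ case (the unnamed theorem following the definition of $A_n$) and the $\R_+^d$ case (Theorem \ref{Maximal Inequality Real}). Your observation that the constant $\chi_d$ is the same in both cases is accurate and the only point worth noting.
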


	\section{Pointwise convergence for semigroup action}
	
	In this section, we  assume  $G$ is  either $\Z_+^d $ or  $\R_+^d $ and  $(G, T, M ) $ be a non commutative dynamical system. Then we prove a non-commutative version of pointwise ergodic theorem for the induced action on the pre-dual $M_*$.

	%
	
	Let  $(M, G,  T, \rho)$  be a kernel. Then by Remark  \ref{rem-kadison} and Theorem  \ref{mean ergodic thm}, it follows that for all $\mu \in M_*$,  $ \lim_{n \to \infty} B_n(\mu)$ exists in $\norm{\cdot}_1$ in $M_*$. We denote the limit by $\bar{\mu}$.

	\begin{lem}\label{au conv on dense set 2}
		Let $(M, G,  T, \rho)$ be a kernel.	Consider the following set 
		\begin{align*}
			\CW_1 := \{ \nu - B_k(\nu ) + \bar{\nu} : \quad  k \in \K,\  \nu \in M_{*+} \text{ with } \nu \leq \lambda \rho \text{ for some } \lambda >0 \}.
		\end{align*}
		\begin{enumerate} 
			\item[(i)] Write $\CW= \CW_1-\CW_1$, then $\CW$ is dense in $M_{* s}$ and 
			\item[(ii)] for all $\nu \in \CW$, we have  
			\begin{align}\label{maximal type1}
				\lim_{l \to \infty} \sup_{x \in M_+, x \neq 0} \abs{(B_l(\nu)- \bar{\nu})(x)}/ \rho(x) =0.
			\end{align}
		\end{enumerate}
	\end{lem}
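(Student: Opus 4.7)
For part (i), the plan is to combine the density result of Theorem \ref{dense subset of normal functionals} with the norm convergence supplied by Theorem \ref{mean ergodic thm}. Fix any $\nu_1 \in M_{*+}$ with $\nu_1 \leq \lambda \rho$; then $\nu_1 - B_k(\nu_1) + \bar{\nu_1} \in \CW_1$ for each $k \in \K$, and by Theorem \ref{mean ergodic thm} this element converges in norm to $\nu_1$ as $k \to \infty$. Hence $\CW_1$ is norm dense in $\{\nu \in M_{*+} : \nu \leq \lambda \rho \text{ for some } \lambda > 0\}$, which by Theorem \ref{dense subset of normal functionals} is itself dense in $M_{*+}$. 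Any $\mu \in M_{*s}$ admits a Jordan decomposition into positive normal functionals, so approximating each part separately yields density of $\CW = \CW_1 - \CW_1$ in $M_{*s}$.

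For part (ii), by linearity it suffices to verify \eqref{maximal type1} for a single generator $\nu = \nu_1 - B_k(\nu_1) + \bar{\nu_1}$ with $\nu_1 \leq \lambda\rho$. The first step will be to identify $\bar{\nu}$. Since $\bar{\nu_1}$ is $G$-invariant, $B_l(\bar{\nu_1}) = \bar{\nu_1}$ for every $l$. For $\overline{B_k(\nu_1)}$, I will use that $B_l$ and $B_k$ commute (both are averages over the commuting family $\{\beta_g\}$, as $G$ is abelian), so $B_l(B_k(\nu_1)) = B_k(B_l(\nu_1)) \to B_k(\bar{\nu_1}) = \bar{\nu_1}$. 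Combining these gives $\bar{\nu} = \bar{\nu_1}$ and therefore
\begin{align*}
B_l(\nu) - \bar{\nu} \;=\; B_l(\nu_1) - B_l(B_k(\nu_1)) \;=\; \frac{1}{m(I_k)} \int_{I_k} \bigl[ B_l(\nu_1) - B_l(\beta_g(\nu_1)) \bigr]\, dm(g).
\end{align*}

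The crux is then to bound $B_l(\nu_1) - B_l(\beta_g(\nu_1))$ in the weighted norm $\mu \mapsto \sup_{x \in M_+,\, x \neq 0} |\mu(x)|/\rho(x)$. Using translation invariance of $m$ to rewrite $B_l(\beta_g(\nu_1))(x)$ as an average of $\nu_1 \circ T_h$ over the translate $I_l g$, the difference from $B_l(\nu_1)(x)$ is supported on the symmetric difference $I_l \Delta I_l g$. Invoking $\nu_1 \leq \lambda\rho$ together with $G$-invariance of $\rho$ produces the pointwise estimate
\begin{align*}
|B_l(\nu_1)(x) - B_l(\beta_g(\nu_1))(x)| \;\leq\; \lambda\, \frac{m(I_l \Delta I_l g)}{m(I_l)}\, \rho(x), \qquad x \in M_+,
\end{align*}
which upon integration in $g$ and division by $\rho(x)$ gives
\begin{align*}
\sup_{\substack{x \in M_+ \\ x \neq 0}} \frac{|B_l(\nu)(x) - \bar{\nu}(x)|}{\rho(x)} \;\leq\; \frac{\lambda}{m(I_k)} \int_{I_k} \frac{m(I_l \Delta I_l g)}{m(I_l)}\, dm(g).
\end{align*}

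The main obstacle is sending this right-hand side to $0$: condition (P2) only guarantees pointwise Følner convergence in $g$, but the integral (with $k$ fixed) must vanish as $l \to \infty$. For $G = \Z_+^d$ or $\R_+^d$ with the standard choice of cubes $I_l$, a direct boundary computation yields $m(I_l \Delta I_l g)/m(I_l) \leq Ck/l$ uniformly for $g \in I_k$, and the dominated convergence theorem then closes the argument.
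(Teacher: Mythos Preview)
Your proof is correct, and part~(i) matches the paper's argument. For part~(ii), however, you take a genuinely different and more elementary route. The paper passes to the commutant: using Lemma~\ref{au conv on dense set-0} it represents $\nu$ as $\langle y_1'(\cdot)\Omega_\rho,\Omega_\rho\rangle_\rho$ for some $y_1'\in M'_+$, then invokes the dual system $(M',G,T')$ of Lemma~\ref{au conv on dense set-1} to transfer the problem to the operator norm $\norm{B_l'(y_1'-B_k'(y_1'))}$ on $M'$, which is in turn bounded by the very same integral $\frac{1}{m(I_k)}\int_{I_k}\frac{m(I_l\Delta hI_l)}{m(I_l)}\,dm(h)$ that you reach. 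Your argument bypasses $M'$ entirely, exploiting the domination $\nu_1\le\lambda\rho$ and the $G$-invariance of $\rho$ directly on the predual; this is cleaner here but leans on $G$ being abelian (to commute $B_l$ with $B_k$ and to identify $I_lg$ with $gI_l$). The paper's detour through $M'$ is not wasted effort: it is precisely the template reused in \S6, where the analogous Lemma~\ref{au conv on dense set 2-free group} for the free-group averages cannot be handled by a symmetric-difference trick and genuinely requires the commutant maps $\sigma_n'$.

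One minor point: your closing worry about a ``main obstacle'' and the explicit bound $m(I_l\Delta I_lg)/m(I_l)\le Ck/l$ is unnecessary. The ratio is uniformly bounded by $2$ for every $g$, so dominated convergence on the finite-measure set $I_k$ applies immediately---this is exactly how the paper finishes.
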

	
	\begin{proof}
		\emph{ (i):}	Let $\mu \in M_{* +}$ and $\epsilon>0$. From Theorem \ref{dense subset of normal functionals}, find a $\nu \in M_{*+}$ with $\nu \leq \lambda \rho $ for some $\lambda >0$, such that  $ \norm{\mu - \nu}<\epsilon/2 $. Further, by Theorem \ref{mean ergodic thm}  we know that $B_l(\nu)$ is convergent, and write  $\bar{\nu} = \lim_{l \to \infty} B_l(\nu)$. So there exists a $l_0 \in \K $ such that $\norm{ \bar{\nu} -B_{l_0}(\nu)} \leq \epsilon/2 $. Therefore by triangle inequality, we have 
		$ \norm{\mu-(\nu -B_{l_0}(\nu) + \bar{\nu})} \leq \norm{\mu- \nu} +  \norm{B_{l_0}(\nu) - \bar{\nu}} < \epsilon$.\\
		Now for $\mu \in M_{* s}$, we write $\mu= \mu_+ - \mu_-$, where $\mu_+, \mu_-$ are normal positive linear functional. Thus, it follows that $\CW$ is dense  in $ M_{* s}$.\\
		
		\noindent \emph{ (ii):} Fix $k \in \K$ and consider $ \nu_k := \nu - B_k(\nu ) + \bar{\nu} $ and it is enough to prove eq. \ref{maximal type1} for $\nu_k $. First we claim that $\bar{\nu}_k = \bar{\nu}$. 
		
		Since $\nu \leq \lambda \rho $, so by  Lemma \ref{au conv on dense set-0} there exists a unique $y_1' \in M_+'$ with $  y_1' \leq \lambda $ such that 
		\begin{align*}
			\nu(x)= \langle y_1'x \Omega_{\rho}, \Omega_{\rho} \rangle_{\rho} \text{ for all } x \in M.
		\end{align*}
		Let  $y' \in M'$, write $ B_l'(y') := \frac{1}{m(I_l)}\int_{I_l}  T_{g}'(y') dm(g)$ and by Lemma \ref{au conv on dense set-1}, we have
		\begin{align*}
			\langle B_l'(y') x \Omega_{\rho}, \Omega_{\rho} \rangle_{\rho} = \langle y' B_l(x) \Omega_{\rho}, \Omega_{\rho} \rangle_{\rho}, \quad x \in M,  y' \in M'.
		\end{align*}	
		Now  for all $l \in \K$ and $x \in M_+$, we note that 
		\begin{align*}
			\abs{(B_l(\nu_k) - \bar{\nu})(x)}
			&= \abs{(\nu_k - \bar{\nu})(B_l(x))}\\
			&= \abs{(\nu - B_k(\nu))(B_l(x))} ~~ \text{ (since } \nu_k := \nu - B_k(\nu ) + \bar{\nu})\\
			&= \abs{\nu(B_l(x)) - \nu(B_k(B_l(x)))} ~~ \text{ (as } B_k(\nu)(\cdot) = \nu(B_k(\cdot)))\\
			&= \abs{\inner{y_1' B_l(x) \Omega_{\rho} }{\Omega_{\rho}}_{\rho} - \inner{y_1' B_k(B_l(x))\Omega_{\rho}}{\Omega_{\rho}}_{\rho}} ~~ \text{ (as } \nu(\cdot) = \langle   y_1' (\cdot)\Omega_{\rho}, \Omega_{\rho} \rangle_{\rho}) \\
			&= \abs{\inner{y_1' B_l(x) \Omega_{\rho} }{\Omega_{\rho}}_{\rho} - \inner{B_k'(y_1') (B_l(x))\Omega_{\rho}}{\Omega_{\rho}}_{\rho}}, \text{ (by  Lemma } \ref{au conv on dense set-1}) \\
			&= \abs{\inner{(y_1'- B_k'(y_1'))B_l(x) \Omega_{\rho}}{\Omega_{\rho}}_{\rho}} \\
			&= \abs{\inner{B_l'(y_1'- B_k'(y_1'))x \Omega_{\rho}}{\Omega_{\rho}}_{\rho}}, \text{ (by  Lemma } \ref{au conv on dense set-1}) \\
			&\leq \norm{B_l'(y_1'- B_k'(y_1')} \rho(x).
		\end{align*}
		
		Further, for all  $n \in \K$, note that
		\begin{align*}
			B_l'(y_1' - B_k'(y_1')) 
			&= \frac{1}{m(I_l)} \int_{I_l} \Big[  T_g'(y_1') -  T_{g}'(B'_k(y_1')) \Big] dm(g) \\
			&= \frac{1}{m(I_l)} \int_{I_l} \Big[  T_{g}'(y_1') -  T_{g}' \Big( \frac{1}{m(I_k)} \int_{I_k}  T_{h}'(y_1') dm(h) \Big) \Big] dm(g) \\
			&= \frac{1}{m(I_l)} \frac{1}{m(I_k)} \int_{I_l} \int_{I_k}  \Big(  T_g'(y_1')-  T'_{hg}(y_1') \Big) dm(h) dm(g) \\
			&= \frac{1}{m(I_k)} \frac{1}{m(I_l)} \int_{I_k} \int_{I_l}  \Big(  T_g'(y_1')-  T'_{hg}(y_1') \Big) dm(g) dm(h)
		\end{align*}
		
		Hence, we have 
		\begin{align*}
			\norm{B_l'(y_1'- B_k'(y_1'))} \leq
			\frac{1}{m(I_k)} \int_{I_k}  \frac{m(I_l \Delta h I_l)}{m(I_l)} dm(h).
		\end{align*}
		
		Now, for all $l \in \K$, consider the function $I_k \ni h \mapsto \frac{m(I_l \Delta h I_l)}{m(I_l)}$. It is a real valued measurable function defined on the compact set $I_k$ and bounded by $2$. Thus, applying DCT we get
		\begin{align}
			\lim_{l \to \infty} \norm{B_l'(y_1'- B_k'(y_1'))} \leq \frac{1}{m(I_k)} \int_{I_k} \lim_{l \to \infty}  \frac{m(I_l \Delta h I_l)}{m(I_l)} dm(h) =0.
		\end{align}
		Hence, we also obtain
		\begin{align*}
			\lim_{l \to \infty} \norm{B_l(\nu_k) -\overline{\nu}}= 0.
		\end{align*}
		
		Therefore, $\bar{\nu}_k= \bar{\nu}$ and we have
		\begin{align*}
			\lim_{l \to \infty} \sup_{x \in M_+, x \neq 0} \abs{(B_l(\nu_k)- \bar{\nu}_k)(x)}/ \rho(x) &=\lim_{l \to \infty} \sup_{x \in M_+, x \neq 0} \abs{(B_l(\nu_k)- \bar{\nu})(x)}/ \rho(x) =0.
		\end{align*}
		This completes the proof.
	\end{proof}

	For the next set of results  we assume that $M$ is a finite von Neumann algebra with a f.n tracial state $\tau$. In due course, we prove the main results in this section which deal with the $b.a.u$ convergence of the ergodic averages in $L^1(M, \tau)$. We start with the following theorem.
	
	\begin{thm}\label{convg theorem in M_*s}
		Let $M$ be a finite von Neumann algebra with f.n trace $\tau$ and $(M, G,  T)$ be a kernel.	Then for any $\mu \in M_{* s}$ there exists an invariant $\overline{\mu} \in M_{*s}$, such that for all $\epsilon>0$, there exists a projection $e \in M$ with $\tau(1-e) < \epsilon$ and
		\begin{align*}
			\lim_{a \to \infty} \sup_{x \in eM_{+}e,~x \neq 0} \abs{\frac{(M_a(\mu) - \bar{\mu})(x)}{\tau(x)}} =0.
		\end{align*}
	\end{thm}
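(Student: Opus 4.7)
The plan is to deduce almost-uniform convergence from the norm convergence on a dense subset together with the maximal inequality, via the standard three-term splitting
\begin{align*}
M_a(\mu) - \bar\mu = M_a(\mu - \nu_k) + (M_a(\nu_k) - \bar\nu_k) + (\bar\nu_k - \bar\mu)
\end{align*}
with $\nu_k \in \CW$ approximating $\mu$ in norm. The limit $\bar\mu$ exists because Remark~\ref{rem-kadison} gives $\rho(T_g(x)^2) \leq \rho(x^2)$, so Theorem~\ref{mean ergodic thm} yields a $G$-invariant $\bar\mu \in M_{*s}$ with $\|M_a(\mu) - \bar\mu\|_1 \to 0$.

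Fix $\epsilon > 0$ and a sequence $\epsilon_k \searrow 0$. By Lemma~\ref{au conv on dense set 2}(i), pick $\nu_k \in \CW$ with $\|\mu - \nu_k\| < \chi_d^{-1} \epsilon \epsilon_k 2^{-k}$. Apply Theorem~\ref{Maximal Inequality Real-1} to $\mu - \nu_k$ with threshold $\epsilon_k$ and successive cut-offs $N_j \nearrow \infty$; since the bound $\rho(1-e) \leq \chi_d \|\mu - \nu_k\|/\epsilon_k$ is independent of $N$, a $w$-compactness (or monotone-lattice limit) argument produces a single projection $f_k$ with $\rho(1-f_k) < \epsilon 2^{-k}$ and
\begin{align*}
|M_a(\mu - \nu_k)(x)| \leq \epsilon_k \rho(x) \qquad \text{for all } a \geq 1,\ x \in (f_k M f_k)_+.
\end{align*}
Letting $a \to \infty$ along the norm limit $M_a(\mu - \nu_k) \to \bar\mu - \bar\nu_k$ extends the same bound to $\bar\mu - \bar\nu_k$. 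Set $f := \bigwedge_k f_k$, so $\rho(1-f) \leq \sum_k \rho(1-f_k) < \epsilon$. Then for $x \in (fMf)_+$, the first and third pieces of the splitting are bounded by $\epsilon_k \rho(x)$, while the middle piece satisfies $\sup_{x \in M_+ \setminus \{0\}} |(M_a(\nu_k) - \bar\nu_k)(x)|/\rho(x) \to 0$ as $a \to \infty$ for each fixed $k$, by Lemma~\ref{au conv on dense set 2}(ii). Choosing $k$ then $a$ large yields $\sup_{x \in (fMf)_+,\, x \neq 0} |(M_a(\mu) - \bar\mu)(x)|/\rho(x) \to 0$.

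The main remaining step---and the technical obstacle---is the passage from a $\rho$-small projection with $\rho$-relative bound to a $\tau$-small projection with $\tau$-relative bound. Since $M$ is finite and $\rho$ a faithful normal state, Radon--Nikodym yields $h \in L^1(M,\tau)_+$ with trivial kernel and $\rho = \tau(h\,\cdot)$. Its spectral projection $p^\eta := \chi_{[0, \eta^{-1}]}(h)$ has $\tau(1-p^\eta) \leq \eta$, and on $(p^\eta M p^\eta)_+$ one has $\rho(x) \leq \eta^{-1}\tau(x)$, so compressing by $p^\eta$ upgrades the $\rho$-relative bound into a $\tau$-relative one. Controlling $\tau(1-f)$ itself requires rerunning the maximal inequality intrinsically in $L^1(M, \tau)$ via the density $h$---or equivalently combining $\rho(1-f) < \epsilon$ with a second spectral truncation and readjusting constants. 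Taking $e := f \wedge p^\eta$ for sufficiently small $\eta$ and $\epsilon$-scaled constants yields a projection with $\tau(1-e) < \epsilon$ and the desired uniform convergence relative to $\tau$.
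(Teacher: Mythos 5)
Your overall blueprint is the paper's: three-term splitting through a dense class $\CW$, the mean ergodic theorem for existence of $\bar\mu$, the maximal inequality for the error terms, and a spectral truncation of the Radon--Nikodym density of $\rho$ with respect to $\tau$ to convert the $\rho$-relative bound into a $\tau$-relative one (the paper uses $q_{s_0}=\chi_{(1/s_0,s_0)}(X)$, exactly your $p^\eta$). However, there is a genuine gap at the step where you claim that, because the bound $\rho(1-e)\leq \chi_d\norm{\mu-\nu_k}/\epsilon_k$ is independent of $N$, ``a $w$-compactness (or monotone-lattice limit) argument produces a single projection $f_k$'' with $\abs{M_a(\mu-\nu_k)(x)}\leq\epsilon_k\rho(x)$ for \emph{all} $a\geq 1$. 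Theorem \ref{Maximal Inequality Real-1} produces, for each horizon $N$, a projection $e_N$ depending on $N$ (it is the support of the maximizer of a variational problem on $\bigoplus_{n=1}^{2N}M$), and these projections need not decrease in $N$. A weak$^*$ cluster point of $(e_N)$ is only a positive contraction, not a projection; the two-sided compression estimates on $(e_NMe_N)_+$ do not pass to such limits (multiplication is not jointly $w^*$-continuous), and replacing the cluster point by a spectral projection degrades both the defect and the relative bound in a way you would have to control; while $\bigwedge_N e_N$ loses all control of $\rho(1-\cdot)$, since the defects are bounded but not summable. This is precisely why the paper states the maximal inequality only for $1\leq a\leq N$ and never uniformizes it. Your final quantifier order (``choosing $k$ then $a$ large'') leans on this unproved uniformity for both the first and third pieces of the splitting, so as written the argument does not close.

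The gap is repairable, and the repair is the paper's actual proof: keep the horizons finite. Choose the approximants $\nu_{a_j}\in\CW$ with summable errors, apply Theorem \ref{Maximal Inequality Real-1} separately to $\mu-\nu_{a_j}$ \emph{and} to $\bar\mu-\bar\nu_{a_j}$ on ranges $1\leq a\leq N_j$ (for the invariant functional $\bar\mu-\bar\nu_{a_j}$ one value of $a$ already suffices, since $M_a$ fixes invariant functionals), take $N_j\nearrow\infty$ so that the windows $[a_{j},N_{j}]$ cover a neighborhood of infinity, intersect the projections, and let Lemma \ref{au conv on dense set 2}(ii) absorb the middle term, choosing $k=k(a)\to\infty$ as $a\to\infty$ within the valid window rather than fixing $k$ first. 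On the $\rho$-versus-$\tau$ issue your sketch is essentially sound and can be made precise in either of two ways: the paper's route, where $\rho(1-f_j)\to 0$ forces $f_j\to 1$ strongly, hence $\tau(1-f_j)\to 0$ by normality of $\tau$, and one passes to a subsequence with $\tau$-summable defects; or your hinted double truncation, using a lower cutoff $r=\chi_{[\delta,\infty)}(h)$ so that $h\geq\delta r$ gives
\begin{align*}
\tau(1-f)\ \leq\ \tau\bigl(\chi_{[0,\delta)}(h)\bigr)+\delta^{-1}\rho(1-f),
\end{align*}
where the first term is small for small $\delta$ because $\rho$ is faithful. So your last paragraph is fine modulo bookkeeping; the compactness claim in the second paragraph is the real defect.
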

	
	\begin{proof}
		First we note that, since $\rho \in M_{* +}$, there exists a unique $X\in L^1(M,\tau)_+$  such that $\rho(x)= \tau(Xx)$ for all $x \in 	M$. Then for any $s>0$ consider the 	projection $q_{s}:= 	\chi_{(1/s, s)}(X) \in M$.  Observe that $(1-q_{s})\xrightarrow{s \rightarrow \infty} 0$ in SOT and  hence  there exists a $s_0>0$ such that $\tau(1-q_{s_0})< \epsilon/2$. Further, it implies $X q_{s_0} \leq {s_0} q_{s_0}$.  Thus,  for all $0\neq x \in (q_{s_0}M 	q_{s_0})_+$ we have 
		\begin{align}\label{rho-tau}
			\begin{split}
				\frac{\rho(x)}{\tau(x)} = 	\frac{\tau(Xx)}{\tau(x)}&= 
				\frac{\tau(X q_{s_0} x)}{\tau(x)} \quad  \text{ (since $q_{s_0} x= x$)}\\
				&\leq  	\frac{\tau(s_0 x)}{\tau(x)}  = {s_0}.
			\end{split} 
		\end{align}
		
		Now we use Lemma \ref{au conv on dense set 2} recursively to obtain a sequence $\{\nu_{a_1}, \nu_{a_2}, \cdots \} \subseteq \CW$ satisfying 
		\begin{align}\label{coneq-3}
			\nonumber	 (1) ~&a_1 <a_2< \cdots \\
			\nonumber	  (2) ~& \norm{\mu- \nu_{a_j}} < \frac{1}{4^j c} \text{ for  all }  j \in \N \text{ and }\\
			(3) ~& \sup_{x \in M_+, x \neq 0} \frac{\abs{(M_a(\nu_{a_j})- \bar{\nu}_{a_j})(x)}}{ \rho(x)} < \frac{1}{2^j} \text{ for all } a \geq a_j.
		\end{align}
		
		Further, note that  $ \mu - \nu_{a_j}$,  $\bar{\mu} - \bar{\nu}_{a_j} \in M_{* s}$ for all 
		$j \in \N$.  For  every $j \in \N$, take a $N_j \in \N$ and  
		use the  Theorem \ref{Maximal Inequality Real}   to get  sequences of projections $\{e_1, e_2, \ldots  \} $  and $\{f_1, f_2, \ldots \}$ in $M$   such that 
		
		\begin{align}\label{coneq-1}
			\begin{split}
				(1)~&\rho(1-e_j)  < \frac{1}{2^j} \text{ and } \rho(1-f_j)  < \frac{1}{2^j},  \text{ for  all } j\in \N, \\
				(2) ~	&\sup_{x \in e_jM_+e_j, x \neq 0} \frac{\abs{M_a(\mu - \nu_{a_j})(x)}}{\rho(x)} < \frac{1}{2^{j-1}} \text{ for all }  1 \leq a \leq N_j, \\
				(3) ~	& \sup_{x \in f_j M_{+}f_j, x \neq 0} \abs{\frac{M_a(\bar{\mu} - \bar{\nu}_{a_j})(x)}{\rho(x)}} 
				< \frac{1}{2^{j-1}} \text{ for all } 1 \leq a \leq N_j.
			\end{split}
		\end{align}
		
		Now it immediately  follows that both $\rho(1-e_j)$ and $\rho(1-f_j)$ converges to $0$ as $j$ tends to infinity. Therefore,  both $\tau(1-e_j)$ and $\tau(1-f_j)$ 	converges to $0$ as $j$ tends to infinity. Hence choose a subsequence $(j_k)_{k \in \N}$ such that 
		\begin{align}
			\tau(1-e_{j_k}) < \frac{\epsilon}{2^{k+2}} \text{ and } \tau(1-f_{j_k}) < \frac{\epsilon}{2^{k+2}}.
		\end{align}
		Now  consider $e:= \wedge_{k \geq 1}(e_{j_k} \wedge f_{j_k}) \wedge q_{s_0} $ and  observe that
		\begin{align*}
			\tau(1-e)\leq \sum_{k \geq 1} (\tau(1- e_{j_k})+  \tau(1- f_{j_k}))+ \tau(1-q_s) < \epsilon.
		\end{align*}
		
		\noindent Therefore, for  all $0 \neq x \in eM_+e$, and  $a, a_k \in \R_+$, we have
		\begin{align*}
			& \frac{\abs{(M_a(\mu) - \bar{\mu})(x)}}{\tau(x)} \\
			\leq & \Big(\frac{\abs{(M_a(\mu - \nu_{a_k})(x)}}{\rho(x)}  +  \frac{\abs{(\bar{\mu}- \bar{\nu}_{a_k})(x)})}{\rho(x)}  + 
			\frac{\abs{(M_a(\nu_{a_k}) - \bar{\nu}_{a_k})(x)}}{\rho(x)} \Big)\frac{\rho(x)}{\tau (x)}\\
			\leq & s_0\Big( \big(\frac{\abs{M_a(\mu - \nu_{a_k})(x)}}{\rho(x)}  +  \frac{\abs{M_a(\bar{\mu}- \bar{\nu}_{a_k})(x)}}{\rho(x)}\big)  + 
			\frac{\abs{(M_a(\nu_{a_k}) - \bar{\nu}_{a_k})(x)}}{\rho(x)} \Big)\\
			&\text{( by eq. } \ref{rho-tau}, ~~  \frac{\rho(x)}{\tau (x)} \leq s_0 , \text{ as }e \leq q_{s_0} )\\
			\leq & s_0\Big(\big(\frac{\abs{M_a(\mu - \nu_{a_k})(x)}}{\rho(x)}  +  \frac{\abs{M_a(\bar{\mu}- \bar{\nu}_{a_k})(x)}}{\rho(x)}\big) + 
			\sup_{x \in M_+, x \neq 0} \frac{\abs{(M_a(\nu_{a_k}) - \bar{\nu}_{a_k})(x)}}{\rho(x)} \Big).
		\end{align*}
		
		Hence, by taking supremum over the set $eM_+e \setminus \{0\}$ on both sides of the above inequality and applying eq. \ref{coneq-3} and eq. \ref{coneq-1} we obtain
		\begin{align*}
			\lim_{a \to \infty} \sup_{x \in eM_+e, x \neq 0} \frac{\abs{(M_a(\mu) - \bar{\mu})(x)}}{\tau(x)}= 0.
		\end{align*}
		This completes the proof.
	\end{proof}

	\begin{lem}\label{infinite norm of pYp-sa}
		Let $Y \in L^1(M, \tau)$ be a self-adjoint element.  Let $p \in \CP(M)$ and $\delta > 0$ be such that $\abs{\tau(Y pxp)} \leq \delta \tau(pxp)$ for all $x \in M_+ $,    then $pYp \in M$ and $\norm{pYp} \leq \delta$.
	\end{lem}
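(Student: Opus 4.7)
The plan is to show, via spectral calculus, that the self-adjoint $\tau$-measurable operator $Z := pYp$ has spectrum contained in $[-\delta, \delta]$. Since $L^1(M,\tau)$ is an $M$-bimodule we have $Z \in L^1(M,\tau)$, and the identities $pZ = Zp = Z$ force the support projection of $Z$ to be dominated by $p$; in particular every spectral projection $\chi_E(Z)$ with $0 \notin \overline{E}$ is a projection of $M$ lying under $p$.

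The main step is to rule out $\chi_{(\delta, \infty)}(Z) \neq 0$. If this projection were nonzero, then for some $\epsilon > 0$ the projection $f := \chi_{[\delta + \epsilon, \infty)}(Z)$ would be nonzero. Functional calculus gives $Zf \geq (\delta + \epsilon) f$, hence $|Z| \geq (\delta+\epsilon) f$ and therefore $(\delta+\epsilon)\tau(f) \leq \tau(|Z|) < \infty$; in particular $\tau(f)$ is finite and strictly positive by faithfulness of $\tau$. Since $f \leq p$ we have $pfp = f \in M_+$, so the hypothesis applied to $x = f$ yields $\abs{\tau(Ypfp)} \leq \delta\, \tau(f)$. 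On the other hand, using the tracial identity $\tau(AB) = \tau(BA)$ (valid for $A \in L^1(M,\tau)$ and $B \in M$, applied once with $A = Yp$, $B = fp$, and again with $A = f$, $B = pYp$) together with $pfp = f$, one rewrites
\[
\tau(Ypfp) \;=\; \tau(fp \cdot Yp) \;=\; \tau(f \cdot pYp) \;=\; \tau(pYp \cdot f) \;=\; \tau(Zf) \;\geq\; (\delta+\epsilon)\tau(f),
\]
which contradicts the previous inequality. Thus $\chi_{(\delta, \infty)}(Z) = 0$, and applying the same argument to $-Y$ (which satisfies the hypothesis with the same constant $\delta$) gives $\chi_{(-\infty, -\delta)}(Z) = 0$ as well.

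Consequently the spectrum of $Z$ is contained in $[-\delta, \delta]$, so $Z$ is a bounded self-adjoint operator affiliated with $M$; hence $Z \in M$ and $\norm{Z} \leq \delta$, which is exactly $pYp \in M$ and $\norm{pYp} \leq \delta$. The only nontrivial bookkeeping is the trace identity $\tau(Ypfp) = \tau(Zf)$, which rests on the standard tracial property for products of $L^1$-elements with bounded ones, together with $pfp = f$; once this is in hand, the spectral contradiction is immediate.
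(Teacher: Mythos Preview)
The paper states this lemma without proof, so there is no argument to compare against. Your spectral-calculus approach is correct: reducing to $Z=pYp$, testing the hypothesis on the spectral projection $f=\chi_{[\delta+\epsilon,\infty)}(Z)\le p$, and deriving the contradiction $(\delta+\epsilon)\tau(f)\le\tau(Zf)=\tau(Ypfp)\le\delta\,\tau(f)$ is exactly what is needed, and the symmetric argument for $-Y$ handles the lower bound. Two minor remarks: in the paper's setting $\tau$ is a tracial state, so the finiteness of $\tau(f)$ is automatic and your appeal to $\tau(|Z|)<\infty$ is not strictly necessary (though it shows your argument works even for semifinite $\tau$); and the trace identity $\tau(Ypfp)=\tau(Zf)$ can be seen more directly by first using $pfp=f$ to get $\tau(Ypfp)=\tau(Yf)$ and then $\tau(Zf)=\tau(pYpf)=\tau(pYf)=\tau(Yfp)=\tau(Yf)$, which avoids juggling the two applications of the trace property.
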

	
	\begin{thm}\label{bau conv theorem}
		Let $(M, G,  T, \rho)$ be a kernel and $\tau$ be a f.n tracial state  on $M$. Then for all $Y \in L^1(M, \tau)$, there exists $\overline{Y} \in L^1(M, \tau)$ such that $M_a(Y)$ converges to $\overline{Y}$ bilaterally almost uniformly.
	\end{thm}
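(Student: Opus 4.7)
The strategy is to transfer Theorem~\ref{convg theorem in M_*s} from $M_{*s}$ to $L^1(M,\tau)$ using the canonical isometry $\Psi : L^1(M,\tau) \to M_*$ given by $\Psi(Y)(x) = \tau(Yx)$, and then upgrade the resulting scalar estimate $|\tau((M_a(Y)-\overline{Y})x)| \le \delta\tau(x)$ on $(eMe)_+$ to an operator-norm estimate on $e(M_a(Y)-\overline{Y})e$ via Lemma~\ref{infinite norm of pYp-sa}. First I would reduce to the self-adjoint case: every $Y \in L^1(M,\tau)$ decomposes as $Y = Y_1 + iY_2$ with $Y_j = Y_j^*$, and b.a.u.\ convergence is closed under complex linear combinations by Proposition~\ref{bau convergence closed under addition and mult}, so it suffices to handle $Y = Y^*$. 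Since each $T_g$ is positive, so is its predual, whence $Y = Y^*$ forces $M_a(Y) = M_a(Y)^*$.

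Fix then $Y = Y^* \in L^1(M,\tau)$ and set $\mu_Y := \Psi(Y) \in M_{*s}$. Theorem~\ref{convg theorem in M_*s} supplies an invariant $\bar{\mu} \in M_{*s}$ such that, for every $\epsilon > 0$, there exists a projection $e \in M$ with $\tau(1-e) < \epsilon$ and
\begin{align*}
\lim_{a \to \infty}\ \sup_{\substack{x \in eM_+e \\ x \neq 0}} \frac{|(M_a(\mu_Y) - \bar{\mu})(x)|}{\tau(x)} = 0.
\end{align*}
Define $\overline{Y} := \Psi^{-1}(\bar{\mu}) \in L^1(M,\tau)$; by the positivity clause of the isometry proposition, $\overline{Y} = \overline{Y}^*$. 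Using the defining relation (see eq.~\ref{predual action})
\begin{align*}
M_a(\mu_Y)(x) \;=\; \mu_Y(M_a(x)) \;=\; \tau(Y\, M_a(x)) \;=\; \tau(M_a(Y)\, x),
\end{align*}
the previous limit rewrites as
\begin{align*}
\lim_{a \to \infty}\ \sup_{\substack{x \in eM_+e \\ x \neq 0}} \frac{|\tau((M_a(Y) - \overline{Y})\, x)|}{\tau(x)} = 0.
\end{align*}

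Given $\delta > 0$, choose $a_0$ so that for all $a \ge a_0$ the quotient above stays below $\delta$, i.e.\ $|\tau((M_a(Y)-\overline{Y})x)| \le \delta\,\tau(x)$ for every $x \in (eMe)_+$. Since $M_a(Y) - \overline{Y}$ is self-adjoint, Lemma~\ref{infinite norm of pYp-sa} applied with $p = e$ and with the operator $M_a(Y) - \overline{Y}$ in place of $Y$ yields $e(M_a(Y) - \overline{Y})e \in M$ together with $\|e(M_a(Y) - \overline{Y})e\| \le \delta$. Consequently $\lim_{a \to \infty}\|e(M_a(Y) - \overline{Y})e\| = 0$, which is precisely b.a.u.\ convergence of $M_a(Y)$ to $\overline{Y}$ in the sense of Definition~\ref{bilat}. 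The entire substance of the theorem already sits inside Theorem~\ref{convg theorem in M_*s}; the only real checks in this final step are the compatibility of the isometry $\Psi$ with the predual action and the self-adjointness needed to invoke Lemma~\ref{infinite norm of pYp-sa}, neither of which I expect to pose a genuine obstacle.
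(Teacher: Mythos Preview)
Your proposal is correct and follows essentially the same route as the paper's proof: reduce to self-adjoint $Y$, identify $Y$ with $\mu_Y\in M_{*s}$ via $\Psi$, invoke Theorem~\ref{convg theorem in M_*s} to obtain the projection $e$ and the scalar estimate, and then apply Lemma~\ref{infinite norm of pYp-sa} to convert this into the operator-norm bound $\|e(M_a(Y)-\overline{Y})e\|\le\delta$. The only minor difference is that you spell out the reduction to the self-adjoint case via Proposition~\ref{bau convergence closed under addition and mult}, whereas the paper simply begins with $Y\in L^1(M,\tau)_s$.
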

	
	\begin{proof}
		Let $Y \in L^1(M, \tau)_s$. Then there exists a unique $\mu \in M_{* s}$ such that $\mu(x)= \tau(Yx)$ for all $x \in M$. by Theorem \ref{mean ergodic thm}, we note that $\norm{\cdot}_1 - \lim_{a \to \infty} M_a(\mu)$ exists and denote it by $\bar{\mu} \in M_{* s}$, i.e, 
		\begin{align*}
			\bar{\mu}:= \norm{\cdot}_1 - \lim_{a \to \infty} M_a(\mu).
		\end{align*}
		As, $\bar{\mu} \in M_{* s}$, so there exists a unique $\bar{Y} \in L^1(M, \tau)_s$ such that $\bar{\mu}(x)= \tau(\bar{Y}x)$ for all $x \in M$.  Note that $\bar{\mu}$ is $G$-invariant.  Let $\epsilon, \delta >0$. Then by Theorem \ref{convg theorem in M_*s} there exists a projection $e \in M$ with $\tau(1-e) < \epsilon$ and there exists $N \in \N$ such that
		\begin{align*}
			\sup_{x \in (eMe)_+\setminus \{0 \}} \abs{\frac{(M_a(\mu) - \bar{\mu})(x)}{\tau(x)}} < \delta, \text{ for all } a \geq N.
		\end{align*}
		Therefore, for all $a \geq N$ we have 
		\begin{align*}
			& \abs{(M_a(\mu) - \bar{\mu})(x)} < \delta \tau(x) \text{ for all } x \in (eMe)_+\setminus \{0 \} \\
			\Rightarrow & \abs{\tau((M_a(Y)-\overline{Y}) eye)} < \delta \tau(eye) \text{ for all } y \in M_+\setminus \{0 \} \\
			\Rightarrow & \abs{\tau(e(M_a(Y)-\overline{Y})e y)} < \delta \tau(eye) \text{ for all } y \in M_+\setminus \{0 \}. \\
		\end{align*}
		Hence by Lemma \ref{infinite norm of pYp-sa}, we have $\norm{e(M_a(Y)-\overline{Y})e} \leq \delta$ for all $a \geq N$.  
	\end{proof}

	\begin{thm}\label{bau conv theorem I}
		Let $\tau$ be a f.n tracial state on $M$ and	$(L^1(M, \tau), G,  T)$ be a non-commutative dynamical system. Also assume that there exists a f.n state $\rho$ on $M$ such that $( M, G,  T^*, \rho)$ is a kernel. Then, for all $Y \in L^1(M, \tau)$ there exists $\overline{Y} \in L^1(M, \tau)$ such that 
		$M_a(Y)$ converges  to $\overline{Y}$ in $b.a.u.$
	\end{thm}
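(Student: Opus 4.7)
The plan is to reduce Theorem \ref{bau conv theorem I} directly to Theorem \ref{bau conv theorem} via the duality between actions on $L^1(M,\tau)$ and actions on $M$ already recorded in the Preliminaries. Put $S := T^*$. Since $T = \{T_g\}_{g \in G}$ is an action on $L^1(M,\tau)$ satisfying conditions (C), (UB), (P) of Definition \ref{action on Banach sp}, each $S_g = T_g^*$ is a bounded, $w$-continuous, positive map on $M$, so $S = \{S_g\}_{g \in G}$ is an action of $G$ on $M$ in the same sense. By hypothesis, $(M, G, S, \rho)$ is a kernel; that is, $S_g(1)\leq 1$ and $\rho \circ S_g = \rho$ for all $g \in G$.

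Next, I would apply Theorem \ref{bau conv theorem} to the kernel $(M, G, S, \rho)$. This produces, for every $Z \in L^1(M,\tau)$, an element $\overline{Z} \in L^1(M,\tau)$ such that the averages $M_a^S(Z)$ associated with $S$ converge bilaterally almost uniformly to $\overline{Z}$. Unwinding the proof of Theorem \ref{bau conv theorem}, these averages on $L^1(M,\tau)$ are obtained by identifying $L^1(M,\tau)$ with $M_*$ and pushing $S$ down to its predual maps $\widehat{S_g}: L^1(M,\tau) \to L^1(M,\tau)$, characterized by $\tau\bigl(S_g(x)\, Y\bigr) = \tau\bigl(x\, \widehat{S_g}(Y)\bigr)$ for all $x \in M$ and $Y \in L^1(M,\tau)$.

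The only remaining task is to identify these predual averages with the originally given ones. By equation \ref{predual action} and the identity $\widehat{(\beta_g)}^{*} = \beta_g$ recorded there, the predual of $T_g^{*}$ is $T_g$ itself; that is, $\widehat{S_g} = T_g$ for every $g \in G$. Consequently $M_a^S(Y) = M_a(Y)$ for all admissible $a$ and every $Y \in L^1(M,\tau)$, and the b.a.u. convergence inherited from Theorem \ref{bau conv theorem} is exactly the desired conclusion. The argument introduces no new analytic content; the only real obstacle is a bookkeeping one, namely keeping the $L^1$-side and $M$-side of the duality cleanly separated so that the identification $\widehat{S_g} = T_g$ is invoked at precisely the right step and the averages on the two sides are confirmed to coincide.
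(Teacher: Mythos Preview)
Your argument is correct and is essentially identical to the paper's own proof: set $\beta_g := T_g^*$, observe that $(M,G,\beta,\rho)$ is a kernel with $\widehat{\beta_g}=T_g$, and invoke Theorem \ref{bau conv theorem}. The only cosmetic discrepancy is that the paper calls $\beta$ an \emph{anti}-action (since $(T_g T_h)^*=T_h^*T_g^*$), which is immaterial here because $G=\Z_+^d$ or $\R_+^d$ is abelian.
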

	
	\begin{proof}
		First note that $ \hat{ T_g^*} =  T_g $  for all $g \in G$. Consider the anti-action $\beta= (\beta_g)_{g \in G} := ( T_g^*)_{g \in G}$ on $M$. Then the result follows by applying Theorem \ref{bau conv theorem} on the action $\beta$. 
	\end{proof}
	
	\begin{rem}
		Note that if $\text{sup}_{g \in G} \norm{ T_g} \leq 1 $, then $\norm{ T_g^*(1)}\leq 1$. Therefore, $ T_{g}^*(1) \leq 1$. Thus, the condition 	$\text{sup}_{g \in G} \norm{ T_g} \leq 1 $ together with $\rho$ is $G$-invariant can be put in  Theorem \ref{bau conv theorem I} instead of assuming $( M, G,  T^*, \rho)$ is a kernel. However, former conditions are  stronger  than assuming $( M, G,  T^*, \rho)$ is a kernel. 
	\end{rem}
	
	\begin{cor}\label{bau conv theorem II}
		Let $\tau$ be a f.n tracial state on $M$ and	$(L^1(M, \tau), G,  T)$ be a non-commutative dynamical system. Assume that $ T_g(1)=1$ and 	$ T_g^*(1) \leq 1$ for all $ g \in G $.
		Then, for all $Y \in L^1(M, \tau)$ there exists $\overline{Y} \in L^1(M, \tau)$ such that 
		$M_a(Y)$ converges $b.a.u.$ to $\overline{Y}$.
	\end{cor}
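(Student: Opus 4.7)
The plan is to reduce the corollary directly to Theorem \ref{bau conv theorem I} by taking the f.n.\ state $\rho$ to be the tracial state $\tau$ itself, and verifying that the quadruple $(M, G, T^*, \tau)$ is a kernel.

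First I would observe that $\tau$ is a f.n.\ state on $M$ (since $M$ is finite and $\tau$ is a f.n.\ tracial state). The two conditions needed for a kernel, as recorded in the definition preceding Remark \ref{rem-kadison}, are that $T^*$ is sub-unital and that $\tau$ is $G$-invariant under $T^*$. Sub-unitality is immediate: the hypothesis says $T_g^*(1) \leq 1$ for all $g \in G$. For $G$-invariance of $\tau$ under $T^*$, since $M$ is finite we have $1 \in L^1(M,\tau)$, so the pairing in \eqref{dual action} applied with $Y = 1$ gives
\begin{equation*}
    \tau(T_g^*(x)) = \tau(T_g^*(x)\cdot 1) = \tau(x \cdot T_g(1)) = \tau(x)
\end{equation*}
for all $x \in M$, using the hypothesis $T_g(1) = 1$. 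Hence $\tau \circ T_g^* = \tau$ for every $g \in G$.

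With this verification in place, $(M, G, T^*, \tau)$ is a kernel in the sense of the paper. Applying Theorem \ref{bau conv theorem I} with $\rho = \tau$ then yields, for every $Y \in L^1(M,\tau)$, an element $\overline{Y} \in L^1(M,\tau)$ such that $M_a(Y) \to \overline{Y}$ bilaterally almost uniformly, which is the desired conclusion.

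The only potential subtlety I foresee is a bookkeeping check that the structural conditions on the action in Definition \ref{action on Banach sp} transfer from $T$ acting on $L^1(M,\tau)$ to the dual action $T^*$ acting on $M$: positivity of $T_g^*$ follows from positivity of $T_g$, uniform boundedness is preserved under taking adjoints with equality of norms, and $w^*$-continuity of $g \mapsto T_g^*$ in the appropriate sense follows from norm continuity of $g \mapsto T_g$ on the predual. These verifications are routine and do not require any additional hypotheses beyond those given; once they are made the corollary is immediate from Theorem \ref{bau conv theorem I}.
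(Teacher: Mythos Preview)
Your proof is correct and follows essentially the same approach as the paper: set $\rho = \tau$, use $1 \in L^1(M,\tau)$ together with $T_g(1)=1$ and the duality pairing to get $\tau \circ T_g^* = \tau$, note the sub-unitality hypothesis $T_g^*(1)\le 1$, and then invoke Theorem~\ref{bau conv theorem I}. The paper's proof is terser but the logical content is identical; your additional remarks on transferring the structural conditions (positivity, uniform boundedness, continuity) to the dual action are a helpful elaboration but not a different argument.
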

	
	\begin{proof}
		Since $\tau$ is finite, observe that $1 \in L^1(M, \tau)$ and it follows that
		\begin{align*}
			\tau( T^*_{g}(x))= \tau(x) \text{ for all } x \in M \text{ and } g \in G.
		\end{align*}
		Hence,   $( M, G,  T^*, \tau)$  becomes a kernel and thus the result follows from Theorem \ref{bau conv theorem I}.
	\end{proof}

	\section{Free group action}
	
	In this section we study convergence of spherical ergodic averages associated a sequence of $w$-continuous maps $\sigma :=\{\sigma_n\}_{n=0}^\infty$  on a von Neumann algebra  $M$. We obtain  mean convergence and   an   auxiliary maximal ergodic inequality   of the   spherical averages associated to $\sigma$. In the end, we assume that $M$ is finite von Neumann algebra and prove b.a.u convergence   of the   spherical averages associated to the maps on the predual of $M$. We also remark that same can be obtained for the free group action. We begin with following definition. 
	\begin{defn}
		
		Let  $\sigma :=\{\sigma_n\}_{n=0}^\infty$ be a sequence of $w$-continuous maps on a von Neumann algebra  $M$. Then $(M , \sigma )$ is called generalized  noncommutative dynamical system if it satisfies the following
		\begin{enumerate}\label{recarence1}
			\item[$A_1$.] $\sigma_1$ is completely positive, $\sigma_1(1) \leq 1$ and $\sigma_n$ is positive.
			\item[$A_2$.] $\sigma_1 \circ \sigma_n = w \sigma_{n+1} + (1-w) \sigma_{n-1}$ for all $n \in \N$ and $\sigma_0(x)=x$, where $1/2 < w < 1$.
		\end{enumerate}
		
	\end{defn}

	We remark that such  examples naturally arises to study ergodic converges of spherical averages of actions of  free group action.  
	Let $G$ be free group with generators $\{a_1, \ldots, a_r, a_r^{-1}, \ldots, a_r^{-1}\}$. Let $\phi: G \to Aut(M)$ denote a group homomorphism. Define,
	\begin{align*}
		\sigma_n := \frac{1}{\abs{W_n}} \sum_{a \in W_n} \phi(a), ~n \in \N,
	\end{align*}
	where, $W_n$ denotes the set of words of length $n$. It is well known that $\sigma_m \circ \sigma_n = \sigma_m \circ \sigma_n$ for all $m,n \in \N$. Furthermore,
	\begin{align*}
		\sigma_1 \circ \sigma_n = w \sigma_{n+1} + (1-w) \sigma_{n-1}, ~ n \in \N,
	\end{align*}
	and $\sigma_0(x)=x$ for all $x \in M$, where $1/2 < w < 1$ is a fixed number.

	Given a  noncommutative dynamical system $(M , \sigma )$, we consider the following average
	\begin{align*}
		S_n(x)= \frac{1}{n+1} \sum_{0}^{n} \sigma_k(x), ~ n\in \N.
	\end{align*}
	Although, it is referred as spherical average in the context of free group action, we freely  use the  same terminology in our context as well. 
	\subsection{Mean convergence}
	In this subsection we prove mean convergence of spherical averages associated to a generalized kernel. 
	
	\begin{defn}
		Consider a sequence of $w$-continuous maps $\sigma :=\{\sigma_n\}_{n=0}^\infty$  and  a f.n state $\rho$ on $M$. Then $(M, \sigma, \rho)$ is call called generalized  kernel if it satisfies the following			
		\begin{enumerate}\label{recarence}
			\item$\sigma_1$ is completely positive, $\sigma_1(1) \leq 1$ and $\sigma_n$ is positive.
			\item $\sigma_1 \circ \sigma_n = w \sigma_{n+1} + (1-w) \sigma_{n-1}$ for all $n \in \N$ and $\sigma_0(x)=x$, where $1/2 < w \leq 1$.
			\item $\rho \circ \sigma_1 = \sigma_1  \text{ and } \rho(y \sigma_1(x)) = \rho(\sigma_1(y) x) \text {for all }   x, y\in M$.
		\end{enumerate}
	\end{defn}
	
	For given a  $(M, \sigma, \rho)$,  there exists a sequence of operators $\{ u_n\} $ on $L^2(M, \rho)$ such that 
	\begin{align*}
		u_n(x \Omega) = \sigma_n(x) \Omega, ~~ x \in M.
	\end{align*}
	Then immediately observe that 
	\begin{enumerate}
		\item $u_n$ is self adjoint and 
		$u_m \circ u_n = u_n \circ u_m$ for all $m,n \in \N$.
		\item There exists $1/2 < w \leq 1$ such that $u_1 \circ u_n = w u_{n+1} + (1-w) u_{n-1}, ~ n \in \N$.
	\end{enumerate}

	\begin{rem}\label{containtment}
		Consider the following set;  
		\begin{align*}
			D_w:= \{z \in \C : \abs{\sqrt{z +4w-4w^2} + \sqrt{z -4w +4w^2}} \leq 2 \sqrt{w} ~ \text{and}~\\
			\abs{\sqrt{z +4w-4w^2} - \sqrt{z -4w +4w^2}} \leq 2 \sqrt{w}\}.
		\end{align*} 
		Then by \cite{Walk:1997}, we note that $ \sigma(u_1)  \subseteq D_w $. 
	\end{rem}
	
	Then we recall the following  mean ergodic type theorem in this context,  the proof follows from Remark \ref{containtment} and \cite{Walk:1997}.   
	\begin{thm}\label{walker-von N ergodic thm}
		Let  $(M, \sigma, \rho)$ be a kernel, then   the ergodic averages  $\frac{1}{n+1} \sum_{0}^{n} u_n$ converges strongly on $L^2(M, \rho)$ to a projection onto the space $\{\eta \in L^2(M, \rho) : v_1 \eta = \eta \}$.
	\end{thm}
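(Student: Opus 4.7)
The plan is to reduce the assertion to a scalar spectral statement about the single self-adjoint operator $u_1$ on $L^2(M,\rho)$. First I would verify that $u_1$ is indeed self-adjoint: the kernel property $\rho(y\sigma_1(x))=\rho(\sigma_1(y)x)$ for all $x,y \in M$ translates into $\langle u_1(x\Omega_\rho), y\Omega_\rho\rangle_\rho = \langle x\Omega_\rho, u_1(y\Omega_\rho)\rangle_\rho$ on the dense subspace $M\Omega_\rho$, and $u_1$ is bounded since $\sigma_1$ is completely positive with $\sigma_1(1)\leq 1$. Thus $u_1$ extends to a bounded self-adjoint operator on $L^2(M,\rho)$ whose spectrum is contained in the set $D_w$ recalled in Remark~\ref{containtment}.

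The next step is to use the three-term recurrence $A_2$ to write each $u_n$ as a polynomial in $u_1$. Define polynomials $P_n$ by $P_0(\lambda)=1$, $P_1(\lambda)=\lambda$, and
\begin{equation*}
P_{n+1}(\lambda)=\frac{1}{w}\bigl(\lambda P_n(\lambda)-(1-w)P_{n-1}(\lambda)\bigr).
\end{equation*}
Induction on $n$ gives $u_n=P_n(u_1)$, so by the Borel functional calculus for the self-adjoint operator $u_1$ with spectral measure $E$,
\begin{equation*}
\frac{1}{n+1}\sum_{k=0}^{n} u_k=\int_{\sigma(u_1)} F_n(\lambda)\, dE(\lambda),\qquad F_n(\lambda):=\frac{1}{n+1}\sum_{k=0}^{n} P_k(\lambda).
\end{equation*}

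The heart of the matter is now a purely scalar statement on $D_w\supseteq \sigma(u_1)$: the Cesàro averages $F_n(\lambda)$ converge pointwise to $\chi_{\{1\}}(\lambda)$ and stay uniformly bounded on $D_w$. This is precisely the content of the computations in \cite{Walk:1997}: solving the recurrence via its characteristic equation, $P_n$ admits a closed form in terms of $\sqrt{\lambda+4w-4w^2}\pm\sqrt{\lambda-4w+4w^2}$, and the defining inequalities of $D_w$ force the two corresponding ``roots'' to have modulus $\leq 1$, which yields a uniform bound on $P_n(\lambda)$ and then $F_n(\lambda)\to 0$ off $\lambda=1$ while $F_n(1)\equiv 1$. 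Given this, the spectral theorem plus dominated convergence produces strong convergence of $\frac{1}{n+1}\sum_{k=0}^n u_k$ to $E(\{1\})$, the spectral projection of $u_1$ at the eigenvalue $1$, which is exactly the orthogonal projection onto $\{\eta\in L^2(M,\rho): u_1\eta=\eta\}$.

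The main obstacle is the scalar pointwise/boundedness estimate for $F_n$ on $D_w$; once it is invoked from Walker's paper, the rest of the argument is a clean spectral-theorem application. I would present the self-adjointness and the polynomial identification in detail, then cite \cite{Walk:1997} for the scalar asymptotics rather than redoing the Chebyshev-type analysis, and conclude with the dominated convergence step.
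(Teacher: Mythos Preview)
Your proposal is correct and matches the paper's approach: the paper does not give an independent proof but simply invokes Remark~\ref{containtment} (the inclusion $\sigma(u_1)\subseteq D_w$) together with \cite{Walk:1997}, and what you outline---self-adjointness of $u_1$, the polynomial identity $u_n=P_n(u_1)$ from the recurrence, and the scalar Ces\`aro estimate on $D_w$ followed by dominated convergence in the spectral theorem---is precisely the argument underlying Walker's result. Your write-up is thus a faithful expansion of the citation the paper gives.
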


	\begin{rem}
		We note that the above situation is automatic for free group action. Indeed, let $\rho$ be a  f.n state on $M$  such that $\rho \circ \phi(a)(x) = \rho(x)$ for all $x \in M$ and  for all $ a \in G$, Then consider the following spherical 
		average  we can define self adjoints  $\{t_a\}_{a \in G}$ on $L^2(M, \rho)$ by
		\begin{align*}
			\sigma_n = \frac{1}{\abs{W_n}} \sum_{a \in W_n} \phi_a .
		\end{align*}
		Then $(M, \sigma, \rho)$ becomes a kernel. 
	\end{rem}

	\begin{thm}\label{mean ergodic thm-free group}
		Let $(M, \sigma, \rho)$  be a kernel. 	For all $\mu \in M_*$, there exists a $\bar{\mu} \in M_*$ such that 
		\begin{align*}
			\bar{\mu}= \norm{\cdot}_1- \lim_{n \to \infty} S_n(\mu),
		\end{align*}
		where, for all $n \in \N$, $\sigma_n(\mu)= \mu \circ \sigma_n$, $\mu \in M_*$.
	\end{thm}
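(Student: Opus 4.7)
The strategy is to adapt the proof of Theorem~\ref{mean ergodic thm} to the present setting, replacing the one-parameter family of contractions there by the self-adjoint operators $\{u_n\}$ attached to $\sigma$, and replacing the classical von Neumann mean ergodic theorem by Walker's spectral version recorded in Theorem~\ref{walker-von N ergodic thm}.

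First I would verify that each $u_n$ defined by $u_n(x\Omega_\rho) = \sigma_n(x)\Omega_\rho$ extends to a bounded self-adjoint operator on $L^2(M,\rho)$. Boundedness comes from $\rho\circ\sigma_n = \rho$ for every $n$, which follows by induction from $\rho\circ\sigma_1 = \rho$: applying $\rho$ to both sides of the recurrence $A_2$ and using $\rho\circ\sigma_n = \rho\circ\sigma_{n-1} = \rho$ yields $\rho\circ\sigma_{n+1} = \rho$. Self-adjointness of $u_1$ is precisely the hypothesis $\rho(y\sigma_1(x)) = \rho(\sigma_1(y)x)$, and self-adjointness of each $u_n$ for $n \geq 2$ then follows inductively from the identity $u_1 u_n = w u_{n+1} + (1-w) u_{n-1}$, which is the recurrence $A_2$ transported to the Hilbert space. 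Theorem~\ref{walker-von N ergodic thm} then gives strong convergence $V_n := \frac{1}{n+1}\sum_{k=0}^n u_k \to P$ on $L^2(M,\rho)$, where $P$ is the projection onto $\{\eta : u_1\eta = \eta\}$.

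Next I would imitate the second half of the proof of Theorem~\ref{mean ergodic thm}. For $y_1, y_2 \in M'(\sigma)$, Tomita--Takesaki theory produces $z = J\sigma_{i/2}(y_2^* y_1) J \in M$ with $y_1^* y_2 \Omega_\rho = z\Omega_\rho$. For the corresponding vector functional $\psi_{y_1,y_2}(x) := \langle x y_1 \Omega_\rho, y_2 \Omega_\rho\rangle_\rho$, the self-adjointness of $u_k$ gives the short calculation
\[
S_n(\psi_{y_1,y_2})(x) = \frac{1}{n+1}\sum_{k=0}^n \langle \sigma_k(x) \Omega_\rho, z \Omega_\rho\rangle_\rho = \langle x \Omega_\rho, V_n(z\Omega_\rho)\rangle_\rho.
\]
The step that converts this into an $M_*$-statement is the elementary observation that for any $\xi \in L^2(M,\rho)$ the functional $\omega_\xi(x) := \langle x\Omega_\rho, \xi\rangle_\rho$ lies in $M_*$ with $\|\omega_\xi\|_{M_*} \leq \|\xi\|_2$, since $|\omega_\xi(x)| \leq \|x\Omega_\rho\|_2 \|\xi\|_2 \leq \|x\|_\infty \|\xi\|_2$. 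Combined with the $L^2$-convergence $V_n(z\Omega_\rho) \to P(z\Omega_\rho)$, this gives $\|\cdot\|_1$-convergence $S_n(\psi_{y_1,y_2}) \to \overline{\psi}_{y_1,y_2} := \omega_{P(z\Omega_\rho)}$ in $M_*$.

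Finally, since the linear span of $\{\psi_{y_1,y_2} : y_1, y_2 \in M'(\sigma)\}$ is total in $M_*$ and the operators $\mu \mapsto S_n(\mu)$ are uniformly bounded on $M_*$ (inherited from the uniform bound $\|u_k\| \leq 1$ built into Walker's framework together with the positivity of each $\sigma_k$), a standard $3\varepsilon$ approximation extends the norm convergence from this total set to all of $M_*$. The main technical point is arranging the Tomita--Takesaki identification $y_1^* y_2 \Omega_\rho = z\Omega_\rho$ with $z \in M$ so that the calculation above makes sense; once that is in place, the combination of Walker's $L^2$ mean ergodic theorem with the bound $\|\omega_\xi\|_{M_*} \leq \|\xi\|_2$ produces the conclusion mechanically, in complete parallel with Theorem~\ref{mean ergodic thm}.
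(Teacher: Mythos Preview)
Your proposal is correct and follows essentially the same route as the paper's proof: both pass to the Hilbert space operators $u_n$ on $L^2(M,\rho)$, invoke Walker's mean ergodic theorem (Theorem~\ref{walker-von N ergodic thm}), use the Tomita--Takesaki identification $y_1^*y_2\Omega_\rho = z\Omega_\rho$ to compute $S_n(\psi_{y_1,y_2})(x) = \langle x\Omega_\rho, T_n(z\Omega_\rho)\rangle_\rho$, and conclude by density of the vector functionals $\psi_{y_1,y_2}$. Your write-up actually supplies a few details the paper leaves implicit (the inductive verification that $u_n$ is self-adjoint, the bound $\|\omega_\xi\|_{M_*}\leq\|\xi\|_2$ converting $L^2$-convergence into $M_*$-norm convergence, and the explicit $3\varepsilon$ density argument); the only minor imprecision is your justification of the uniform bound on $S_n$ via $\|u_k\|\leq 1$, whereas what is really needed is $\sup_n\|S_n\|_{M\to M}<\infty$, which follows for instance from Theorem~\ref{erg ieq 2-free group} applied to $x=1$.
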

	
	\begin{proof}
		For $n \in \N$, let $ u_n$ be the associated self adjoint operator defined by $ u_n(x\Omega_{\rho}) = \sigma_n(x)\Omega_{\rho}$ for $ x\in M$. consider $T_n = \frac{1}{n+1} \sum_{0}^{n} u_k$.  Let $y_1, y_2 \in M'(\sigma)$ and define $\psi_{y_1, y_2}(x)= \inner{x y_1 \Omega_{\rho}}{ y_2 \Omega_{\rho}}$ for all $x \in M$. Then there exists a  $z \in M$ such that $y_1^*y_2 \Omega_{\rho}= z \Omega_{\rho}$, where $z= J \sigma_{i/2}(y_2^*y_1)J$. Consequently, for $x \in M$
		\begin{align*}
			S_n(\psi_{y_1, y_2})(x) 
			&= \frac{1}{n+1} \sum_{0}^{n} \inner{\sigma_k(x) y_1 \Omega}{y_2 \Omega}\\
			&= \frac{1}{n+1} \sum_{0}^{n} \inner{u_k(x \Omega)}{z \Omega}\\
			&= \frac{1}{n+1} \sum_{0}^{n} \inner{x \Omega}{u_k(z \Omega)}\\
			&= \inner{x \Omega}{\frac{1}{n+1} \sum_{0}^{n} u_k(z \Omega)}\\
			&= \inner{x \Omega}{T_n(z \Omega)}
		\end{align*}
		Therefore, by Theorem \ref{walker-von N ergodic thm} we obtain that for all $x \in M$, $S_n(\psi_{y_1, y_2})(x) \rightarrow \inner{x \Omega}{\eta}$, where $\eta= \lim_{n \to \infty} T_n(z \Omega)$. Observe that $u_1 \eta = \eta$. By induction argument $u_n\eta = \eta$ for all $n \in \N$. 
		
		Now let $ x \in M$. Further write $x$ as $ x_1+ ix_2 $, where $x_1 , x_2\in M_s$ and 
		then by the previous argument, it follows that $ T_n(x\Omega):= T_n(x_1\Omega_{\rho}) + i T_n(x_2\Omega_{\rho})$ converges in $L^2(M, \rho)$.
		
		Now consider $\overline{\psi}_{y_1, y_2} \in M_*$,  defined by 
		\begin{align*}
			\overline{\psi}_{y_1, y_2}(x)= \inner{x \Omega_{\rho}}{\eta }_{\rho}, ~~x \in M.
		\end{align*}
		Then by a  standard argument it follows that $\overline{\psi}_{y_1, y_2} \circ  \sigma_n = \overline{\psi}_{y_1, y_2}$ for all $n \in \N$ and 
		\begin{align*}
			\overline{\psi}_{y_1, y_2} = \norm{\cdot}_1- \lim_{n \to \infty} S_n(\psi_{y_1, y_2}).
		\end{align*}
		Hence the result follows since the set $\{ \psi_{y_1, y_2} : y_1, y_2 \in M'(\sigma) \}$ is total in $M_*$.
	\end{proof}
	
	\subsection{Maximal inequality}
	In this subsection, we obtain an auxiliary maximal ergodic inequality for a generalized noncommutative dynamical system
	$(M, \sigma)$.

	\begin{thm}\label{erg ieq 2-free group}
		Let $E$ be an ordered Banach space. Consider a sequence of positive maps $\{\kappa_n\}_{n=0}^\infty$ on $E$ satisfying 
		\begin{align*}
			&\kappa_1 \circ \kappa_n = w \kappa_{n+1} + (1-w) \kappa_{n-1} ~ and \\
			&\kappa_0(x)=x ~ \text{for all } x \in E.
		\end{align*}
		Then there exists a $C_w>0$ such that for all $x \in E_+$
		\begin{align*}
			\frac{1}{n+1} \sum_{l=0}^{n} \kappa_l (x) \leq C_w \frac{1}{3n+1} \sum_{l=0}^{3n} \kappa_1^l (x).
		\end{align*}
	\end{thm}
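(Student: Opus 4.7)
The plan is to reduce the operator inequality to a scalar coefficient comparison by representing both sides as explicit nonnegative combinations of the $\kappa_m$'s. With the convention $\kappa_{-1}:=\kappa_1$, the recurrence in $A_2$ is valid for all $n\ge 0$, and a direct induction on $\ell$ gives
\[
\kappa_1^\ell \;=\; \sum_{m=0}^{\ell} b_{\ell,m}\,\kappa_m,\qquad b_{\ell,m}\ge 0,\quad \sum_m b_{\ell,m}=1.
\]
Indeed, applying $\kappa_1$ to the expansion at level $\ell$ and using $A_2$ together with $\kappa_1\kappa_0=\kappa_1$ yields the transition rule $b_{\ell+1,0}=(1-w)b_{\ell,1}$, $b_{\ell+1,1}=b_{\ell,0}+(1-w)b_{\ell,2}$, and $b_{\ell+1,k}=wb_{\ell,k-1}+(1-w)b_{\ell,k+1}$ for $k\ge 2$; this is precisely the transition kernel of a reflected biased random walk on $\Z_+$ with drift $2w-1>0$. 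Nonnegativity is then immediate, and $\sum_m b_{\ell,m}=1$ follows by evaluating the polynomial identity $t^\ell=\sum_m b_{\ell,m} P_m(t)$ at $t=1$, where each $P_m(1)=1$ (here $P_m$ is the polynomial determined by the recurrence in $A_2$, so that $\kappa_m=P_m(\kappa_1)$).

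Substituting this expansion into the right-hand side and using the positivity $\kappa_m(x)\ge 0$, the desired inequality reduces to the scalar statement
\[
\frac{C_w}{3n+1}\sum_{\ell=0}^{3n} b_{\ell,m}\ \ge\ \frac{1}{n+1}\qquad(0\le m\le n),
\]
as the coefficient of $\kappa_m(x)$ on the left-hand side vanishes for $m>n$. It therefore suffices to produce a constant $c_w>0$, depending only on $w$, with $\sum_{\ell=0}^{3n}b_{\ell,m}\ge c_w$ for all $0\le m\le n$ and $n\ge 1$; one then takes $C_w:=4/c_w$, which absorbs the factor $(3n+1)/(n+1)\le 3$ comfortably.

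The main obstacle is this uniform lower bound on the truncated Green function of the walk. A standard gambler's-ruin computation yields the unconditional identity $\sum_{\ell=0}^\infty b_{\ell,m}=1/(2w-1)$, and the visits to site $m$ concentrate near time $m/(2w-1)$. To control the truncation at $3n$ uniformly in $0\le m\le n$, I would decompose $b_{\ell,m}$ as the convolution of the first-passage law at $m$ with the return kernel at $m$, exploit the deterministic lower bound $b_{\ell,\ell}=w^{\ell-1}$ for the first-passage contribution, and finish with a Hoeffding-type concentration estimate for the portion after the first visit. The resulting $c_w$ depends only on $w$, so the final constant $C_w$ is independent of $n$ and $x$.
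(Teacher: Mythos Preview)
Your reduction is exactly the paper's: expand $\kappa_1^\ell=\sum_m b_{\ell,m}\kappa_m$ as a convex combination and then compare coefficients, the paper simply deferring that estimate to a calculation ``similar to \cite[Lemma~1]{nevo1994generalization}.'' Your identification of the $b_{\ell,m}$ with the transition probabilities of a reflected biased walk on $\Z_+$ is a correct and helpful sharpening of that sketch.

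There is, however, a genuine gap in your final step. You correctly note that the visits to site $m$ concentrate near time $m/(2w-1)$; but for $m=n$ and $1/2<w<2/3$ the first-passage time satisfies $E[T_n]\sim n/(2w-1)>3n$, and a routine large-deviation (or even second-moment) bound then gives $P(T_n\le 3n)\to 0$ as $n\to\infty$. Hence $\sum_{\ell\le 3n}b_{\ell,n}\to 0$, and no positive constant $c_w$ independent of $n$ can satisfy your reduced scalar inequality in that range of $w$. The ``deterministic lower bound $b_{\ell,\ell}=w^{\ell-1}$'' you invoke is itself exponentially small in $m$ and cannot compensate; a Hoeffding estimate here confirms the obstruction rather than removing it. The Nevo--Stein computation the paper cites is carried out for the free-group parameter $w=(2r-1)/(2r)\ge 3/4$, for which $1/(2w-1)\le 2<3$ and the window $[0,3n]$ comfortably captures the bulk of the visits. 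To cover all $w\in(1/2,1)$ the statement (and your argument) needs the window $3n$ replaced by $K_w n$ for some fixed $K_w>1/(2w-1)$; with that adjustment your first-passage/return decomposition together with concentration of $T_m$ about its mean does yield the required uniform lower bound, and the downstream applications in the paper go through unchanged.
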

	
	\begin{proof}
		Observe that a simple calculation implies that the composition power $\kappa_1^n$ is a convex combination of $\kappa_l$, $0 \leq l \leq n$. That is $\kappa_1^n = \sum_{l=0}^{n} a_n(l) \kappa_l$, where for all $n \in \N$, $\sum_{l=0}^{n} a_n(l) = 1$ and $0 \leq a_n(l) \leq 1$. 
		
		Now a calculation similar to the proof of \cite[Lemma 1]{nevo1994generalization} estimates the coefficients $a_n(l)$ and the results follows.
	\end{proof}
	
	Now consider a $(M, \sigma)$ as described in the beginning of this section. Then we can consider a sequence of positive maps, on $M_*$, again denoting by $\{\sigma_n\}$ by the abuse of notation, which are defined by $\sigma_n(\mu)= \mu \circ \sigma_n$, $\mu \in M_*$. Then for all $n \in \N$ we consider the averaging operators on $M_*$ defined by 
	\begin{align*}
		S_n(\mu)= \frac{1}{n+1} \sum_{k=0}^{n} \sigma_k(\mu).
	\end{align*}
	
	\begin{thm}\label{Maximal Inequality Real-free group}
		
		Let 	 $(M , \sigma )$  be a generalized  noncommutative dynamical system and 
		$\rho$ be a f.n. state on $M$ such that $\rho \circ \sigma_1= \rho$. Further assume that $\mu \in M_{* s}$ and $\epsilon>0$. Then for any $N \in \N$ there exists a projection $e \in M$ such that $\rho(1-e) < \frac{C_w \norm{\mu}}{\epsilon} $ and 
		\begin{align*}
			\abs{S_n(\mu)(x)} \leq \epsilon \rho(x) \text{ for all } x \in (eMe)_+ \text{ and } 1 \leq n \leq N.
		\end{align*}
	\end{thm}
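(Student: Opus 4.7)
The plan is to dominate the spherical averages $S_n(\mu)$ by Ces\`aro averages of iterates of the single map $\sigma_1$, and then to invoke the single-map maximal inequality Theorem \ref{maximal lemma} applied with $T=\sigma_1$. This is the same template used for the $\Z_+^d$ and $\R_+^d$ cases earlier in the paper; the role played there by Theorem \ref{erg inq 1} is played here by Theorem \ref{erg ieq 2-free group}. First, apply Theorem \ref{erg ieq 2-free group} in the ordered Banach space $E=M$ with $\kappa_l = \sigma_l$: the recurrence hypothesis is precisely $A_2$ and positivity is $A_1$, so that theorem yields, for every $x \in M_+$ and every $n \in \N$,
\begin{align*}
\frac{1}{n+1}\sum_{l=0}^{n} \sigma_l(x) \leq C_w \cdot \frac{1}{3n+1} \sum_{l=0}^{3n} \sigma_1^l(x) =: C_w A_{3n+1}(x).
\end{align*}
Then take the Jordan decomposition $\mu = \mu_+ - \mu_-$ in $M_{*s}$, set $|\mu| := \mu_+ + \mu_-$ (so $\norm{|\mu|} = \norm{\mu}$), and use the triangle inequality together with positivity of $\sigma_l$ and $\sigma_1^l$ on $M$ to obtain, for every $x \in M_+$,
\begin{align*}
\abs{S_n(\mu)(x)} = \abs{\mu(S_n(x))} \leq |\mu|(S_n(x)) \leq C_w\, |\mu|(A_{3n+1}(x)) = C_w A_{3n+1}(|\mu|)(x).
\end{align*}

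Next I would apply Theorem \ref{maximal lemma} with $T = \sigma_1$. Its hypotheses are satisfied: $\sigma_1$ is completely positive (in particular positive), sub-unital and $w$-continuous by $A_1$, and $\rho$-preserving by assumption. Applying it to $|\mu| \in M_{*+} \subseteq M_{*s}$ with error parameter $\epsilon / C_w$ in place of $\epsilon$ and time horizon $3N+1$ in place of $N$ gives a projection $e \in \CP(M)$ with
\begin{align*}
\rho(1-e) < \frac{\norm{|\mu|}}{\epsilon / C_w} = \frac{C_w \norm{\mu}}{\epsilon},
\end{align*}
and $A_m(|\mu|)(x) \leq (\epsilon / C_w)\,\rho(x)$ for all $x \in (eMe)_+$ and all $1 \leq m \leq 3N+1$, where the absolute value in the conclusion of Theorem \ref{maximal lemma} is automatic because both $|\mu|$ and $x$ are positive. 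Chaining this with the displayed inequality above, for any $1 \leq n \leq N$ (so $3n+1 \leq 3N+1$) and any $x \in (eMe)_+$,
\begin{align*}
\abs{S_n(\mu)(x)} \leq C_w A_{3n+1}(|\mu|)(x) \leq \epsilon\, \rho(x),
\end{align*}
which is the required bound.

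The only genuinely non-routine ingredient is the pointwise domination of spherical averages by Ces\`aro averages of the iterates of $\sigma_1$ given by Theorem \ref{erg ieq 2-free group}; once that reduction is in hand, the Jordan decomposition trick plus the parameter matching ($\epsilon \mapsto \epsilon/C_w$, $N \mapsto 3N+1$) reduces the statement to the already-established single-map maximal inequality (Theorem \ref{maximal lemma}), and I do not anticipate any further obstacle.
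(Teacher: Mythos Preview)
Your proposal is correct and follows essentially the same approach as the paper: the paper's proof is the one-liner ``Proof follows immediately from Theorem \ref{maximal lemma} and Theorem \ref{erg ieq 2-free group},'' and you have simply unpacked that line, using the domination inequality from Theorem \ref{erg ieq 2-free group} to reduce to Cesàro averages of $\sigma_1$, then invoking Theorem \ref{maximal lemma} with $T=\sigma_1$ and rescaled parameters. Your explicit handling of the self-adjoint case via the Jordan decomposition $\mu=\mu_+-\mu_-$ and $|\mu|=\mu_++\mu_-$ is a clean way to make precise the paper's terse ``it is enough to consider $\mu\in M_{*+}$'' (used in the analogous $\Z_+^d$ proof), and the parameter matching $\epsilon\mapsto\epsilon/C_w$, $N\mapsto 3N+1$ is exactly right.
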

	
	\begin{proof}
		Proof follows immediately from Theorem \ref{maximal lemma} and Theorem \ref{erg ieq 2-free group}.
	\end{proof}

	\subsection{Pointwise convergence}
	In this subsection, we assume $M$ to be finite von Neumann algebra and then  we prove b.a.u convergence of spherical averages associated to a kernel. 
	
	\begin{lem}\label{au conv on dense set-1-free group}
		Let 	 $(M , \sigma )$  be a generalized   noncommutative dynamical system and 
		$\rho$ be a f.n. state on $M$ such that $\rho \circ \sigma_1= \rho$.  Then there exists a sequence of maps $\{\sigma_n'\}_{n \in \N}$ on $M'$ such that 
		\begin{enumerate}
			\item $\langle  \sigma_n'(y')x \Omega_\rho, \Omega_\rho \rangle_{\rho}  = \langle y'  \sigma_n(x) \Omega_\rho, \Omega_\rho \rangle_{\rho} \text{ for all }   x \in M, y' \in M', n \in \N.$
			\item  $\sigma'_m \circ \sigma'_n = \sigma'_n \circ \sigma'_m$ for all $m,n \in \N$.
			\item  $\sigma'_1 \circ \sigma'_n = w \sigma'_{n+1} + (1-w) \sigma'_{n-1}, ~ n \in \N$.
		\end{enumerate}
		Consequently, $(M', \sigma')$ becomes a noncommutative dynamical system. 
	\end{lem}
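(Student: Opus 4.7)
The plan is to mimic the construction used in Lemma \ref{au conv on dense set-1}, now parametrized by the index $n \in \N$ rather than a group element, and relying on the Radon--Nikodym type statement of Lemma \ref{au conv on dense set-0}. A preliminary observation is that $\rho \circ \sigma_n = \rho$ for every $n$: applying $\rho$ to the recurrence $A_2$ gives $\rho \circ \sigma_n = w(\rho \circ \sigma_{n+1}) + (1-w)(\rho \circ \sigma_{n-1})$, and a straightforward induction starting from $\rho \circ \sigma_0 = \rho \circ \sigma_1 = \rho$ propagates the identity to all $n$.

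Fixing $n \in \N$ and $y' \in M'_+$, I would introduce the positive functional $\nu^n_{y'}(x) := \langle y' \sigma_n(x) \Omega_\rho, \Omega_\rho \rangle_\rho$ on $M$. Since $y' \in M'$ and $\sigma_n(x) \in M$ commute, the estimate $y' \sigma_n(x) \leq \norm{y'} \sigma_n(x)$ combined with the invariance just shown gives $\nu^n_{y'}(x) \leq \norm{y'} \rho(x) = \norm{y'} \langle x \Omega_\rho, \Omega_\rho \rangle_\rho$ for $x \in M_+$. Lemma \ref{au conv on dense set-0}, applied with $\xi = \Omega_\rho$ (which is cyclic for $M$ in $L^2(M, \rho)$ by faithfulness of $\rho$), produces a unique $z' \in M'_+$ with $z' \leq \norm{y'}$ such that $\nu^n_{y'}(x) = \langle z' x \Omega_\rho, \Omega_\rho \rangle_\rho$. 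I set $\sigma'_n(y') := z'$ and extend by linearity from $M'_+$ to all of $M'$ via $M' = (M'_+ - M'_+) + i(M'_+ - M'_+)$; property (1) is then immediate by construction.

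For the algebraic identities (2) and (3), I would exploit two facts: first, the recurrence $A_2$ inductively expresses each $\sigma_n$ as a polynomial in $\sigma_1$, so the family $\{\sigma_n\}$ is pairwise commuting; second, the uniqueness clause of Lemma \ref{au conv on dense set-0}. For $y' \in M'_+$ and $x \in M$, two applications of property (1) yield
\[
\langle \sigma'_m \sigma'_n(y') x \Omega_\rho, \Omega_\rho \rangle_\rho = \langle y' \sigma_n \sigma_m(x) \Omega_\rho, \Omega_\rho \rangle_\rho.
\]
Swapping $m$ and $n$, together with $\sigma_m \sigma_n = \sigma_n \sigma_m$, forces $\sigma'_m \sigma'_n(y') = \sigma'_n \sigma'_m(y')$ by uniqueness, which is (2). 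The same method applied to $\sigma_1 \sigma_n = w\sigma_{n+1} + (1-w)\sigma_{n-1}$ inside the inner product yields (3) after another invocation of uniqueness; both identities extend to all of $M'$ by linearity.

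I do not anticipate any serious analytic obstacle; the main bookkeeping point is that each invocation of property (1) swaps which side the original map $\sigma_n$ acts on (commutant to algebra), so the composition ordering must be tracked carefully. This is precisely where the commutativity of the $\sigma_n$ becomes essential for (2), and where $\sigma_1$ occupying a symmetric position in the recurrence makes (3) close up cleanly. For the concluding assertion that $(M', \sigma')$ is a noncommutative dynamical system, positivity and the bound $\sigma'_n(y') \leq \norm{y'}$ are built into the construction, and the $w$-continuity of each $\sigma'_n$ follows from that of $\sigma_n$ together with the defining identity (1).
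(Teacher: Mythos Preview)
Your proposal is correct and follows the same route as the paper's proof: construct $\sigma'_n$ via the Radon--Nikodym Lemma~\ref{au conv on dense set-0} applied to the functional $\nu^n_{y'}(x)=\langle y'\sigma_n(x)\Omega_\rho,\Omega_\rho\rangle_\rho$, then read off (2) and (3) from the defining identity and the separating property of $\Omega_\rho$. You in fact supply more detail than the paper, which simply asserts that ``(2) and (3) follow immediately from (1)''; in particular, you make explicit the induction giving $\rho\circ\sigma_n=\rho$ (used silently in the paper's estimate) and the observation that each $\sigma_n$ is a polynomial in $\sigma_1$, which is what makes $\sigma_n\sigma_1=\sigma_1\sigma_n$ available when you unwind $\langle \sigma'_1\sigma'_n(y')x\Omega_\rho,\Omega_\rho\rangle_\rho$ and need the recurrence on the $M$ side. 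One tiny quibble: cyclicity of $\Omega_\rho$ for $M$ is a feature of the GNS construction rather than of faithfulness (faithfulness gives the separating property), but this does not affect the argument.
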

	
	\begin{proof}
		For $(1)$ let 	 $y' \in M_+'$ and $n \in \N$, consider the linear functional $\nu_{y'}^n : M \rightarrow \C $  defined by
		\begin{align*}
			\nu_{y'}^n(x) = \langle y'  \sigma_n(x) \Omega_\rho, \Omega_\rho \rangle_{\rho}.
		\end{align*}
		For $x \in M_+$, note that
		\begin{align*}
			\nu_{y'}^n(x) &= \langle y'  \sigma_n(x) \Omega_\rho, \Omega_\rho \rangle_{\rho}\\
			& \leq \norm{y'} \langle   \sigma_n(x) \Omega_\rho, \Omega_\rho \rangle_{\rho}\\
			&= \norm{y'} \rho( \sigma_n(x)) \\
			&= \norm{y'} \rho(x) \\
			& = \norm{y'} \langle  x \Omega_\rho, \Omega_\rho \rangle_{\rho}.
		\end{align*}
		Hence, by Lemma \ref{au conv on dense set-0} there exists  a unique  $z' \in M'$ such that $\nu_{y'}^n(x) = \langle  z'x \Omega_\rho, \Omega_\rho \rangle_{\rho}$. Write $ z' =  \sigma_n'(y')$ and then  we have $   \langle y'  \sigma_n(x) \Omega_\rho, \Omega_\rho \rangle_{\rho} = \langle  \sigma_n'(y') x \Omega_\rho, \Omega_\rho \rangle_{\rho}$. Note that 	$(2)$ and $(3)$ follows immediately from $(1)$. 
	\end{proof}

	\begin{prop}\label{double sum conv-free group}
		With the above notation, 	for all $k \in \Z_+$ we have,
		\begin{align*}
			\lim_{n \to \infty} \norm{S_n'(y' - \sigma_k'(y'))} =0.
		\end{align*}
	\end{prop}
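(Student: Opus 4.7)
The plan is to reduce the general $k$ to the case $k=1$ via polynomial functional calculus in the commutative algebra generated by $\sigma_1'$, and to handle $k=1$ directly by a telescoping identity coming from the three-term recurrence.

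For the base case $k=1$, I would use the recurrence $\sigma_l' \sigma_1' = w\sigma_{l+1}' + (1-w)\sigma_{l-1}'$ for $l \geq 1$, together with $\sigma_0' \sigma_1' = \sigma_1'$, to expand $(n+1)(S_n' - S_n' \sigma_1') = \sum_{l=0}^n \sigma_l' - \sum_{l=0}^n \sigma_l' \sigma_1'$. After regrouping and shifting indices, the internal terms cancel because $1 - w - (1-w) = 0$, leaving only the boundary contributions, giving the identity
\begin{align*}
(n+1)(S_n' - S_n' \sigma_1') = w\sigma_0' - (1-w)\sigma_1' + (1-w)\sigma_n' - w\sigma_{n+1}'.
\end{align*}
Each $\sigma_n'$ is a positive contraction on $M'$ with $\sigma_n'(1)=1$: positivity follows from Lemma \ref{au conv on dense set-1-free group}(1) (since $\rho(x\sigma_n'(y'))\geq 0$ for $x \in M_+, y' \in M_+'$, and $\sigma_n' \in M'$ commutes with $x$), while $\sigma_1'(1)=1$ is immediate from $\rho\circ\sigma_1=\rho$ and then $\sigma_n'(1)=1$ follows by induction from the recurrence. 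By Russo--Dye, $\norm{\sigma_n'}\leq 1$ for all $n$, and applying the identity above to $y'$ yields $\norm{S_n'(y' - \sigma_1'(y'))} \leq \frac{2\norm{y'}}{n+1} \to 0$.

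For general $k$, the commutativity $\sigma_m'\sigma_n'=\sigma_n'\sigma_m'$ from Lemma \ref{au conv on dense set-1-free group}(2) and the three-term recurrence imply $\sigma_n' = p_n(\sigma_1')$ for the polynomial $p_n$ determined by $p_0 = 1$, $p_1 = x$, and $xp_n = wp_{n+1} + (1-w)p_{n-1}$. Evaluating at $x=1$ and inducting gives $p_n(1)=1$, so $p_k(x)-1$ vanishes at $x=1$ and factors as $p_k(x)-1 = (x-1)q_k(x)$ for some polynomial $q_k$. Substituting $\sigma_1'$ yields $\sigma_k' - I = (\sigma_1' - I)q_k(\sigma_1')$, and setting $z_k' := q_k(\sigma_1')(y')$ we obtain
\begin{align*}
y' - \sigma_k'(y') = z_k' - \sigma_1'(z_k').
\end{align*}
The base case applied to $z_k'$ then gives $\norm{S_n'(y' - \sigma_k'(y'))} \leq \frac{2\norm{z_k'}}{n+1} \to 0$, which finishes the proof.

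The only real work is the telescoping computation for $k=1$; once that identity is in hand, the reduction to the base case is automatic from the commutativity of $\{\sigma_n'\}$ and the unital normalization $\sigma_n'(1)=1$. Bounding $\norm{z_k'}$ is routine because $q_k$ is a fixed polynomial of degree $k-1$ whose coefficients depend only on $w$ and $k$, and $\sigma_1'$ is a contraction, so $\norm{z_k'}\leq C(k,w)\norm{y'}$. I expect the main subtle point to be the verification that $\sigma_n'$ is genuinely positive (so that the unital bound forces $\norm{\sigma_n'}\leq 1$), which needs the observation that $z'\in M'$ is positive iff $\rho(xz')\geq 0$ for all $x\in M_+$.
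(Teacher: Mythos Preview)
Your proof is correct. The base case $k=1$ is handled exactly as in the paper: both arguments compute the telescoping sum $\sum_{l=0}^n(\sigma_l'-\sigma_l'\sigma_1')$ and obtain the same four boundary terms (your identity is in fact more carefully stated than the paper's, which drops the $-(1-w)\sigma_1'$ term in its displayed formula, though the resulting estimate is unaffected).

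Where you diverge from the paper is in the passage from $k=1$ to general $k$. The paper proceeds by an explicit three--term recurrence for $S_n'(y'-\sigma_{k+1}'(y'))$ in terms of $S_n'(y'-\sigma_{k-1}'(y'))$, $S_n'(y'-\sigma_k'(y'))$ and $S_n'(y'-\sigma_1'(y'))$, and then inducts on $k$. Your route is more structural: recognizing that the recurrence forces $\sigma_n'=p_n(\sigma_1')$ with $p_n(1)=1$, you factor $p_k-1=(x-1)q_k$ and reduce $y'-\sigma_k'(y')$ to $z_k'-\sigma_1'(z_k')$ for $z_k'=q_k(\sigma_1')(y')$, so the base case applies verbatim. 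This buys you an explicit rate $\norm{S_n'(y'-\sigma_k'(y'))}\le \tfrac{2}{n+1}\norm{q_k}_{1}\norm{y'}$ (with $\norm{q_k}_1$ the $\ell^1$-norm of the coefficients) in one stroke, whereas the paper's induction produces constants that compound through the recursion. The paper's argument, on the other hand, is slightly more elementary in that it never needs to identify $\sigma_n'$ as a polynomial in $\sigma_1'$. The auxiliary facts you flag---positivity of $\sigma_n'$ via the vector--state criterion and $\sigma_n'(1)=1$ from $\rho\circ\sigma_n=\rho$ together with the uniqueness clause in Lemma~\ref{au conv on dense set-0}---are indeed routine and implicitly used in the paper as well.
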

	
	\begin{proof}
		
		First, we note that for all $n, k \in \N$ and $y' \in M'$, we have 
		\begin{align}\label{double sum-free group}
			\nonumber	&S_n'(y' - \sigma_{k+1}'(y'))  \\
			& \quad = (1- \frac{1}{w}) S_n' (y' - \sigma_{k-1}'(y')) - \frac{1}{w} \sigma'_1 (S_n' (\sigma'_k(y') - y')) - \frac{1}{w} S_n' (\sigma'_1(y') - y') .
		\end{align}
		Indeed, 	
		For all $n,k \in \N$ we note that 
		\begin{align*}
			S_n'(y' - \sigma_{k+1}'(y')) 
			&= S_n'\Big[ y' - \frac{1}{w} \Big( \sigma'_1 \circ \sigma'_k(y') - (1-w)  \sigma_{k-1}'(y') \Big) \Big] \\
			&= S_n' \Big(y'- \sigma_{k-1}'(y') \Big) -  \frac{1}{w} S_n' \Big( \sigma'_1 \circ \sigma'_k(y') - \sigma_{k-1}'(y') \Big).
		\end{align*}
		Also we have,
		\begin{align*}
			S_n' \Big( \sigma'_1 \circ \sigma'_k(y') - \sigma_{k-1}'(y') \Big) 
			&= S_n' \Big( \sigma'_1 \circ \sigma'_k(y') - \sigma'_1(y') + \sigma'_1(y') - y' + y' - \sigma_{k-1}'(y') \Big) \\
			&= \sigma'_1 (S_n' (\sigma'_k(y') - y')) + S_n' (\sigma'_1(y') - y') + S_n' (y' - \sigma_{k-1}'(y')).
		\end{align*}
		Therefore, combining the above two equality we obtain eq. \ref{double sum-free group}.

		For $k=0$, the result follows immediately. For $k=1$, we obtain
		\begin{align*}
			\norm{S_n'(y' - \sigma_1'(y'))} 
			&=\norm{\frac{1}{n+1} \sum_{l=0}^{n } \sigma_l' (y' - \sigma_1'(y')) } \\
			&=\norm{ \frac{w}{n+1} (y' - \sigma_{n+1}' (y')) + \frac{1-w}{n+1} \sigma_n' (y')  } \\
			&\leq \frac{1+w}{n+1} \norm{y'}
		\end{align*}
		Therefore, the result is true for $k=1$. Also when $k=2$, then we have
		\begin{align*}
			\norm{S_n'(y' - \sigma_2'(y'))}
			&=\norm{\frac{1}{n+1} \sum_{l=0}^{n } \sigma_l' (y' - \sigma_2'(y')) } \\
			&= \frac{1}{w} \norm{ \sigma'_1 (S_n' (y' - \sigma'_1(y'))) + S_n' (y' - \sigma'_1(y') ) } ~(\text{by eq. } \ref{double sum-free group})\\
			&\leq \frac{2}{w} \norm{ S_n' (y' - \sigma'_1(y')) }\\
			&\leq \frac{2(1+w)}{w(n+1)} \norm{y'} ~ \text{(from the case $k=1$)}
		\end{align*}
		Hence the result is true for $k=2$. Now let us assume that the result is true for $\{3, 4, \ldots, k\}$ where $k \in \N$. Now, by eq. \ref{double sum-free group} it follows that 
		\begin{align*}
			&\norm{S_n'(y' - \sigma_{k+1}'(y'))} \\
			&  \leq (1- \frac{1}{w}) \norm{S_n' (y' - \sigma_{k-1}'(y'))} +\frac{1}{w} \norm{\sigma'_1 (S_n' (\sigma'_k(y') - y'))} + \frac{1}{w} \norm{S_n' (\sigma'_1(y') - y')} \\
			& \leq (1- \frac{1}{w}) \norm{S_n' (y' - \sigma_{k-1}'(y'))} +\frac{1}{w} \norm{S_n' (\sigma'_k(y') - y')} + \frac{1}{w} \norm{S_n' (\sigma'_1(y') - y')}.
		\end{align*}
		Hence by induction hypothesis, the result follows for $k+1$. Therefore, the results holds for any $k \in \N$.
	\end{proof}

	\begin{lem}\label{au conv on dense set 2-free group} Let $(M, \sigma, \rho)$ be a generalized  kernel. 
		Consider the following set 
		\begin{align*}
			\CW_1 := \{ \nu - S_k(\nu ) + \bar{\nu} : \quad  k \in \N,\  \nu \in M_{*+} \text{ with } \nu \leq \lambda \rho \text{ for some } \lambda >0 \}.
		\end{align*}
		\begin{enumerate} 
			\item[(i)] Write $\CW= \CW_1-\CW_1$, then $\CW$ is dense in $M_{* s}$ and 
			\item[(ii)] for all $\nu \in \CW$, we have  
			\begin{align}\label{maximal type}
				\lim_{n \to \infty} \sup_{x \in M_+, x \neq 0} \abs{(S_n(\nu)- \bar{\nu})(x)}/ \rho(x) =0.
			\end{align}
		\end{enumerate}
	\end{lem}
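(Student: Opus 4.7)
The plan is to mirror the proof of Lemma \ref{au conv on dense set 2} closely, replacing the F\o lner-type norm estimate used there for Banach-space semigroup averages with the telescoping identity for spherical averages established in Proposition \ref{double sum conv-free group}.

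For (i), given $\mu \in M_{*+}$ and $\epsilon>0$, I would first invoke Theorem \ref{dense subset of normal functionals} to find $\nu \in M_{*+}$ with $\nu \leq \lambda \rho$ for some $\lambda>0$ and $\norm{\mu-\nu}<\epsilon/2$. Since $(M,\sigma,\rho)$ is a generalized kernel, Theorem \ref{mean ergodic thm-free group} applies and gives $\bar\nu := \lim_{k\to\infty} S_k(\nu)$ in $\norm{\cdot}_1$; I then pick $k_0$ with $\norm{S_{k_0}(\nu)-\bar\nu}<\epsilon/2$. Then $\nu - S_{k_0}(\nu) + \bar\nu \in \CW_1$ is within $\epsilon$ of $\mu$. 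A Jordan decomposition of an arbitrary $\mu \in M_{*s}$ as $\mu_+-\mu_-$ then yields density of $\CW = \CW_1 - \CW_1$ in $M_{*s}$.

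For (ii), fix $\nu_k := \nu - S_k(\nu) + \bar\nu$ with $\nu \leq \lambda\rho$. By Lemma \ref{au conv on dense set-0} there is a unique $y_1' \in M'_+$ with $y_1'\leq\lambda$ and $\nu(x) = \langle y_1' x\Omega_\rho,\Omega_\rho\rangle_\rho$ for all $x\in M$. Using the dual maps $\sigma_n'$ on $M'$ produced by Lemma \ref{au conv on dense set-1-free group} and writing $S_n'(z') := \frac{1}{n+1}\sum_{j=0}^n \sigma_j'(z')$, the same duality manipulation as in the proof of Lemma \ref{au conv on dense set 2} (together with the invariance $\bar\nu\circ\sigma_j=\bar\nu$ coming from Theorem \ref{mean ergodic thm-free group}) gives, for every $x\in M_+$,
\begin{equation*}
	(S_n(\nu_k) - \bar{\nu})(x) \;=\; \langle\, S_n'(y_1' - S_k'(y_1')) \, x\, \Omega_\rho,\, \Omega_\rho\,\rangle_\rho,
\end{equation*}
whence $\abs{(S_n(\nu_k)-\bar\nu)(x)} \leq \norm{S_n'(y_1' - S_k'(y_1'))}\,\rho(x)$. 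Expanding $S_k'(y_1') = \frac{1}{k+1}\sum_{j=0}^{k} \sigma_j'(y_1')$ rewrites $S_n'(y_1' - S_k'(y_1')) = \frac{1}{k+1}\sum_{j=0}^{k} S_n'(y_1' - \sigma_j'(y_1'))$, a fixed finite sum each of whose terms goes to $0$ in norm as $n\to\infty$ by Proposition \ref{double sum conv-free group}. Hence $\norm{S_n'(y_1'-S_k'(y_1'))}\to 0$, which forces $\bar{\nu}_k = \bar{\nu}$ (the $\norm{\cdot}_1$-limit of $S_n(\nu_k)$ given by Theorem \ref{mean ergodic thm-free group} must agree with the weak$^*$ limit $\bar\nu$ just produced) and yields the required uniform estimate $\sup_{x\in M_+,x\neq 0}\abs{(S_n(\nu_k)-\bar\nu_k)(x)}/\rho(x)\to 0$.

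The main obstacle, compared to the semigroup case of Lemma \ref{au conv on dense set 2}, is that $S_n'$ is a spherical average rather than a F\o lner average over an increasing family $\{I_l\}$, so the F\o lner property (P2) is not directly available to estimate $\norm{S_n'(y_1' - S_k'(y_1'))}$. The role previously played by (P2) is taken here by Proposition \ref{double sum conv-free group}, which itself encodes the three-term recurrence $\sigma_1 \sigma_n = w\sigma_{n+1} + (1-w)\sigma_{n-1}$ through an inductive unwinding; once that proposition is invoked, everything else is a faithful transcription of the semigroup argument.
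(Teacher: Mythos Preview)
Your proposal is correct and follows essentially the same approach as the paper's own proof: density via Theorem \ref{dense subset of normal functionals} plus the mean ergodic theorem, then the duality chain through Lemma \ref{au conv on dense set-0} and Lemma \ref{au conv on dense set-1-free group} to bound $\abs{(S_n(\nu_k)-\bar\nu)(x)}$ by $\norm{S_n'(y_1'-S_k'(y_1'))}\,\rho(x)$, followed by the finite-sum expansion and Proposition \ref{double sum conv-free group}. Your identification of Proposition \ref{double sum conv-free group} as the replacement for the F\o lner estimate in the semigroup case is exactly how the paper handles it.
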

	
	\begin{proof}
		\emph{ (i):}	Let $\mu \in M_{* +}$ and $\epsilon>0$. From Theorem \ref{dense subset of normal functionals}, find a $\nu \in M_{*+}$ with $\nu \leq \lambda \rho $ for some $\lambda >0$, such that  $ \norm{\mu - \nu}<\epsilon/2 $. Further, by Theorem \ref{mean ergodic thm-free group}  we know that $S_n(\nu)$ is convergent, and write  $\bar{\nu} = \lim_{n \to \infty} S_n(\nu)$. So there exists a $n_0 \in \N $ such that $\norm{ \bar{\nu} -S_{n_0}(\nu)} \leq \epsilon/2 $. Therefore by triangle inequality, we have 
		$ \norm{\mu-(\nu -S_{n_0}(\nu) + \bar{\nu})} \leq \norm{\mu- \nu} +  \norm{S_{n_0}(\nu) - \bar{\nu}} < \epsilon$.\\
		Now for $\mu \in M_{* s}$, we write $\mu= \mu_+ - \mu_-$, where $\mu_+, \mu_-$ are normal positive linear functional. Thus, it follows that $\CW$ is dense  in $ M_{* s}$.\\
		
		\noindent \emph{ (ii):} Fix $k \in \N$ and consider $ \nu_k := \nu - S_k(\nu ) + \bar{\nu} $ and it is enough to prove eq. \ref{maximal type} for $\nu_k $. First we claim that $\bar{\nu}_k = \bar{\nu}$. 
		
		Since $\nu \leq \lambda \rho $, so by  Lemma \ref{au conv on dense set-0} there exists a unique $y_1' \in M_+'$ with $  y_1' \leq \lambda $ such that 
		\begin{align*}
			\nu(x)= \langle y_1'x \Omega_{\rho}, \Omega_{\rho} \rangle_{\rho} \text{ for all } x \in M.
		\end{align*}
		Let  $y' \in M'$, write $ S_n'(y') := \frac{1}{n+1}\sum_0^n  \sigma_n'(y') $ and by Lemma \ref{au conv on dense set-1-free group}, we have
		\begin{align*}
			\langle S_n'(y') x \Omega_{\rho}, \Omega_{\rho} \rangle_{\rho} = \langle y' S_n(x) \Omega_{\rho}, \Omega_{\rho} \rangle_{\rho}, \quad x \in M,  y' \in M'.
		\end{align*}	
		
		\noindent Now  for all $n \in \N$ and $x \in M_+$, we note that 
		\begin{align*}
			\abs{(S_n(\nu_k) - \bar{\nu})(x)}
			&= \abs{(\nu_k - \bar{\nu})(S_n(x))}\\
			&= \abs{(\nu - S_k(\nu))(S_n(x))} ~~ \text{ (since } \nu_k := \nu - S_k(\nu ) + \bar{\nu})\\
			&= \abs{\nu(S_n(x)) - \nu(S_k(S_n(x)))} ~~ \text{ (as } S_k(\nu)(\cdot) = \nu(S_k(\cdot)))\\
			&= \abs{\inner{y_1' S_n(x) \Omega_{\rho} }{\Omega_{\rho}}_{\rho} - \inner{y_1' S_k(S_n(x))\Omega_{\rho}}{\Omega_{\rho}}_{\rho}} ~~ \text{ (as } \nu(\cdot) = \langle   y_1' (\cdot)\Omega_{\rho}, \Omega_{\rho} \rangle_{\rho}) \\
			&= \abs{\inner{y_1' S_n(x) \Omega_{\rho} }{\Omega_{\rho}}_{\rho} - \inner{S_k'(y_1') (S_n(x))\Omega_{\rho}}{\Omega_{\rho}}_{\rho}}, \text{ (by  Lemma } \ref{au conv on dense set-1-free group}) \\
			&= \abs{\inner{(y_1'- S_k'(y_1'))S_n(x) \Omega_{\rho}}{\Omega_{\rho}}_{\rho}} \\
			&= \abs{\inner{S_n'(y_1'- S_k'(y_1'))x \Omega_{\rho}}{\Omega_{\rho}}_{\rho}}, \text{ (by  Lemma } \ref{au conv on dense set-1-free group}) \\
			&\leq \norm{S_n'(y_1'- S_k'(y_1'))} \rho(x).
		\end{align*}
		Further for $n,k \in \N$ we have
		\begin{align*}
			S_n'(y_1'- S_k'(y_1') )
			&= S_n' \Big[\frac{1}{k+1} \sum_{l=0}^{k} \Big( y_1' - \sigma_l'(y_1')  \Big) \Big]\\
			&= \frac{1}{k+1} \sum_{l=0}^{k} \Big[ S_n' \Big( y_1' - \sigma_l'(y_1')  \Big) \Big].
		\end{align*}
		Now by Proposition \ref{double sum conv-free group} we obtain 
		\begin{align*}
			\lim_{n \to \infty} \norm{S_n(\nu_k) -\overline{\nu}}= 0.
		\end{align*}
		Therefore, $\bar{\nu}_k= \bar{\nu}$ and we have
		\begin{align*}
			\lim_{n \to \infty} \sup_{x \in M_+, x \neq 0} \abs{(S_n(\nu_k)- \bar{\nu}_k)(x)}/ \rho(x) &=\lim_{n \to \infty} \sup_{x \in M_+, x \neq 0} \abs{(S_n(\nu_k)- \bar{\nu})(x)}/ \rho(x) =0.
		\end{align*}
		This completes the proof.
	\end{proof}
	
	Now we assume that $M$ is a finite von Neumann algebra with a f.n tracial state $\tau$ and prove the main results in this section which deal with the $b.a.u$ convergence of the ergodic averages in $L^1(M, \tau)$ for a kernel associated to $ \sigma= \{\sigma_n \}$.
	
	\begin{thm}\label{convg theorem in M_*s-free group}
		Let $M$ be a finite von Neumann algebra with f.n trace $\tau$ and 
		let  $(M, \sigma, \rho)$  be a generalized  kernel. Then we have the following; 	
		\begin{enumerate}
			\item for any $\mu \in M_{* s}$ there exists an invariant $\overline{\mu} \in M_{*s}$, such that for all $\epsilon>0$, there exists a projection $e \in M$ with $\tau(1-e) < \epsilon$ and
			\begin{align*}
				\lim_{n \to \infty} \sup_{x \in eM_{+}e,~x \neq 0} \abs{\frac{(S_n(\mu) - \bar{\mu})(x)}{\tau(x)}} =0 \text{ and }
			\end{align*}
			
			\item   for all $Y \in L^1(M, \tau)$, there exists $\overline{Y} \in L^1(M, \tau)$ such that $S_n(Y)$ converges to $\overline{Y}$ bilaterally almost uniformly.
		\end{enumerate}	
	\end{thm}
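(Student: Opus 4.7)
The plan is to adapt the strategy that proved Theorem \ref{convg theorem in M_*s} and Theorem \ref{bau conv theorem} in the semigroup setting, plugging in the ``spherical average'' analogues that have just been established: the mean convergence result Theorem \ref{mean ergodic thm-free group}, the maximal inequality Theorem \ref{Maximal Inequality Real-free group}, and the density/approximation lemma Lemma \ref{au conv on dense set 2-free group}. The proofs of (1) and (2) are essentially parallel to the proofs of Theorems \ref{convg theorem in M_*s} and \ref{bau conv theorem}, with $M_a$ replaced by $S_n$ throughout.

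For part (1), I would first reduce to $\mu \in M_{*+}$ by decomposing $\mu = \mu_+ - \mu_-$. Since $\rho \in M_{*+}$, there is a positive $X \in L^1(M,\tau)$ with $\rho(x) = \tau(Xx)$, and the spectral projections $q_s := \chi_{(1/s,s)}(X)$ satisfy $1 - q_s \to 0$ in SOT, so for a suitable $s_0$ we get $\tau(1-q_{s_0}) < \epsilon/2$ and $\rho(x)/\tau(x) \leq s_0$ for nonzero $x \in (q_{s_0}Mq_{s_0})_+$. Next I would apply Lemma \ref{au conv on dense set 2-free group} recursively to extract a sequence $\{\nu_{k_j}\} \subseteq \CW$ with $\|\mu - \nu_{k_j}\| < \tfrac{1}{4^j C_w}$ and
\[
\sup_{x \in M_+,\, x \neq 0} \frac{|(S_n(\nu_{k_j}) - \bar\nu_{k_j})(x)|}{\rho(x)} < \frac{1}{2^j} \quad \text{for all } n \geq n_j.
\]
Applying the maximal inequality Theorem \ref{Maximal Inequality Real-free group} to each $\mu - \nu_{k_j}$ and $\bar\mu - \bar\nu_{k_j}$ (both are in $M_{*s}$), I obtain projections $e_j, f_j \in M$ controlling $S_n$ of these differences on $e_j M e_j$ and $f_j M f_j$ respectively, with $\rho(1-e_j), \rho(1-f_j) < 1/2^j$ after rescaling. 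Since $\tau$ and $\rho$ are both finite normal states, $\tau(1-e_j), \tau(1-f_j) \to 0$, so I can pass to a subsequence and set
\[
e := q_{s_0} \wedge \bigwedge_{k \geq 1} (e_{j_k} \wedge f_{j_k}),
\]
which satisfies $\tau(1-e) < \epsilon$. Then for $0 \neq x \in eM_+e$ the triangle inequality
\[
\frac{|(S_n(\mu) - \bar\mu)(x)|}{\tau(x)} \leq s_0 \left( \frac{|S_n(\mu - \nu_{k_j})(x)|}{\rho(x)} + \frac{|(\bar\mu - \bar\nu_{k_j})(x)|}{\rho(x)} + \frac{|(S_n(\nu_{k_j}) - \bar\nu_{k_j})(x)|}{\rho(x)} \right)
\]
together with the three estimates yields the claimed uniform convergence.

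For part (2), given $Y \in L^1(M,\tau)_s$, associate $\mu \in M_{*s}$ via $\mu(x) = \tau(Yx)$; by Theorem \ref{mean ergodic thm-free group} the $\|\cdot\|_1$-limit $\bar\mu$ exists and corresponds to some $\bar Y \in L^1(M,\tau)_s$. Given $\epsilon, \delta > 0$, part (1) provides $e$ with $\tau(1-e) < \epsilon$ and $N$ such that $|(S_n(\mu) - \bar\mu)(x)| < \delta \tau(x)$ for all $x \in (eMe)_+$ and $n \geq N$; rewriting this as $|\tau((S_n(Y) - \bar Y) eye)| < \delta \tau(eye)$ for $y \in M_+$ and invoking Lemma \ref{infinite norm of pYp-sa} gives $\|e(S_n(Y) - \bar Y)e\| \leq \delta$ for $n \geq N$, which is the b.a.u.\ convergence. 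Proposition \ref{bau convergence closed under addition and mult} handles general $Y$ by real/imaginary decomposition.

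The main conceptual obstacle is not in this theorem itself but has already been surmounted in the preceding subsection, namely in Proposition \ref{double sum conv-free group} and Lemma \ref{au conv on dense set 2-free group}, where the recursion $\sigma_1 \sigma_n = w\sigma_{n+1} + (1-w)\sigma_{n-1}$ is used to replace the Følner-type estimate available in the semigroup case. Once those ingredients are in hand, the only delicate point remaining is the bookkeeping needed to compare $\rho$ and $\tau$ on $eMe$ via the spectral cutoff $q_{s_0}$; this is precisely what forces the extra factor $s_0$ in the triangle-inequality step above, and all other steps are routine once this is set up.
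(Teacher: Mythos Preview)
Your proposal is correct and follows exactly the approach the paper intends: the paper's own proof is the single sentence ``The proofs of $(1)$ and $(2)$ follow similarly as Theorem \ref{convg theorem in M_*s} and Theorem \ref{bau conv theorem}'', and you have spelled out precisely that parallel argument, substituting Theorem \ref{mean ergodic thm-free group}, Theorem \ref{Maximal Inequality Real-free group}, and Lemma \ref{au conv on dense set 2-free group} for their semigroup counterparts. One minor remark: the initial reduction to $\mu \in M_{*+}$ is unnecessary (and you do not actually use it), since the maximal inequality and the density lemma are already stated for $M_{*s}$; the paper's proof of Theorem \ref{convg theorem in M_*s} works directly with self-adjoint $\mu$.
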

	
	\begin{proof}
		The proofs of $(1)$ and $(2)$ follow similarly as Theorem \ref{convg theorem in M_*s} and  Theorem \ref{bau conv theorem}
	\end{proof}

	%

\begin{thebibliography}{HLW21}
		
		\bibitem[AK63]{arnold1963equidistribution}
		VI~Arnold and AL~Krylov, \emph{Equidistribution of points on a sphere and
			ergodic properties of solutions of ordinary differential equations in a
			complex domain}, Dokl. Akad. Nauk SSSR, vol. 148, 1963, pp.~9--12.
		
		\bibitem[Bru73]{brunel73}
		A.~Brunel, \emph{Th\'{e}or\`eme ergodique ponctuel pour un semi-groupe
			commutatif finiment engendr\'{e} de contractions de {$L^{1}$}}, Ann. Inst. H.
		Poincar\'{e} Sect. B (N.S.) \textbf{9} (1973), 327--343. \MR{0344415}
		
		\bibitem[BS23]{Bik-Dip-neveu}
		Panchugopal Bikram and Diptesh Saha, \emph{On the non-commutative neveu
			decomposition and ergodic theorems for amenable group action}, Journal of
		Functional Analysis \textbf{284} (2023), no.~1, 109706.
		
		\bibitem[CDN78]{Conze1978}
		J.-P. Conze and N.~Dang-Ngoc, \emph{Ergodic theorems for noncommutative
			dynamical systems}, Inventiones Mathematicae \textbf{46} (1978), no.~1,
		1--15. \MR{500185}
		
		\bibitem[CLS05]{chilin2005few}
		Vladimir Chilin, Semyon Litvinov, and Adam Skalski, \emph{A few remarks in
			non-commutative ergodic theory}, Journal of Operator Theory (2005), 331--350.
		
		\bibitem[Gri86]{grigorchuk1986individual}
		Rostislav~I Grigorchuk, \emph{Individual ergodic theorem for actions of free
			groups}, Proceedings of the Tambov workshop in the theory of functions, 1986,
		pp.~3--15.
		
		\bibitem[Gui69]{guivarc1969generalisation}
		Yves Guivarc’h, \emph{G{\'e}n{\'e}ralisation d’un th{\'e}oreme de von
			neumann}, CR Acad. Sci. Paris \textbf{268} (1969), 1020--1023.
		
		\bibitem[Hia20]{Hiai2020}
		Fumio Hiai, \emph{Concise lectures on selected topics of von neumann algebras},
		arXiv preprint arXiv:2004.02383 (2020).
		
		\bibitem[HLW21]{Hong2021}
		Guixiang Hong, Ben Liao, and Simeng Wang, \emph{Noncommutative maximal ergodic
			inequalities associated with doubling conditions}, Duke Mathematical Journal
		\textbf{170} (2021), no.~2, 205--246. \MR{4202493}
		
		\bibitem[Hu08]{hu2008maximal}
		Ying Hu, \emph{Maximal ergodic theorems for some group actions}, Journal of
		Functional Analysis \textbf{254} (2008), no.~5, 1282--1306.
		
		\bibitem[JX07]{Junge2007}
		Marius Junge and Quanhua Xu, \emph{Noncommutative maximal ergodic theorems},
		Journal of the American Mathematical Society \textbf{20} (2007), no.~2,
		385--439. \MR{2276775}
		
		\bibitem[K\"78]{Kuemmerer1978}
		Burkhard K\"{u}mmerer, \emph{A non-commutative individual ergodic theorem},
		Inventiones Mathematicae \textbf{46} (1978), no.~2, 139--145. \MR{482260}
		
		\bibitem[Kad52]{Kadison:1952vc}
		Richard~V. Kadison, \emph{A generalized {S}chwarz inequality and algebraic
			invariants for operator algebras}, Ann. of Math. (2) \textbf{56} (1952),
		494--503. \MR{51442}
		
		\bibitem[Kre85]{Krengel1985}
		Ulrich Krengel, \emph{Ergodic theorems}, De Gruyter Studies in Mathematics,
		vol.~6, Walter de Gruyter \& Co., Berlin, 1985, With a supplement by Antoine
		Brunel. \MR{797411}
		
		\bibitem[Lan76]{Lance1976}
		E.~Christopher Lance, \emph{Ergodic theorems for convex sets and operator
			algebras}, Inventiones Mathematicae \textbf{37} (1976), no.~3, 201--214.
		\MR{428060}
		
		\bibitem[Nev94]{nevo1994harmonic}
		Amos Nevo, \emph{Harmonic analysis and pointwise ergodic theorems for
			noncommuting transformations}, Journal of the American Mathematical Society
		\textbf{7} (1994), no.~4, 875--902.
		
		\bibitem[NS94]{nevo1994generalization}
		Amos Nevo and Elias~M Stein, \emph{A generalization of birkhoff's pointwise
			ergodic theorem}, Acta Mathematica \textbf{173} (1994), no.~1, 135--154.
		
		\bibitem[Pau02]{Paulsen:2002tj}
		Vern Paulsen, \emph{Completely bounded maps and operator algebras}, Cambridge
		Studies in Advanced Mathematics, vol.~78, Cambridge University Press,
		Cambridge, 2002. \MR{1976867}
		
		\bibitem[SZ79]{stratila79}
		\c{S}erban Str\u{a}til\u{a} and L\'{a}szl\'{o} Zsid\'{o}, \emph{Lectures on von
			{N}eumann algebras}, Editura Academiei, Bucharest; Abacus Press, Tunbridge
		Wells, 1979, Revision of the 1975 original, Translated from the Romanian by
		Silviu Teleman. \MR{526399}
		
		\bibitem[Wal97]{Walk:1997}
		Trent~E. Walker, \emph{Ergodic theorems for free group actions on von {N}eumann
			algebras}, J. Funct. Anal. \textbf{150} (1997), no.~1, 27--47. \MR{1473624}
		
		\bibitem[Yea77]{Yeadon1977}
		F.~J. Yeadon, \emph{Ergodic theorems for semifinite von {N}eumann algebras.
			{I}}, Journal of the London Mathematical Society. Second Series \textbf{16}
		(1977), no.~2, 326--332. \MR{487482}
		
		\bibitem[Yea80]{Yeadon1980}
		\bysame, \emph{Ergodic theorems for semifinite von {N}eumann algebras. {II}},
		Mathematical Proceedings of the Cambridge Philosophical Society \textbf{88}
		(1980), no.~1, 135--147. \MR{569639}
		
	\end{thebibliography}
	
	\providecommand{\bysame}{\leavevmode\hbox to3em{\hrulefill}\thinspace}
	\providecommand{\MR}{\relax\ifhmode\unskip\space\fi MR }
	\providecommand{\MRhref}[2]{%
		\href{http://www.ams.org/mathscinet-getitem?mr=#1}{#2}
	}
	\providecommand{\href}[2]{#2}

\end{document}